\titleformat{\section}[block]
{\normalfont \large\bfseries}
{\thesection}{1.2em}{\bfseries}
\titleformat{\subsection}[block]
{\normalfont \normalsize \bfseries}
{\thesubsection}{0.9em}{\bfseries}
\titlespacing{\paragraph}{%
  0pt}{
  0.5\baselineskip}{
  1em}
\def\th@plain{%
  \thm@notefont{}
  \itshape 
}
\def\th@definition{%
  \thm@notefont{}
  \normalfont 
}
\newtheorem{thm}{Theorem}[section]
\newtheorem{cor}[thm]{Corollary}
\newtheorem{lem}[thm]{Lemma}
\newtheorem{prop}[thm]{Proposition}
\newtheorem{rem}[thm]{Remark}
\newtheorem{eg}[thm]{Example}
\newtheorem{defn}[thm]{Definition}
\numberwithin{equation}{section}
\newcommand{\e}{{\rm e}} 
\newcommand{\N}{\mathbb{N}}
\newcommand{\R}{\mathbb{R}}
\newcommand{\U}{\mathbb{U}} 
\newcommand{\Utwo}{\mathbb{T}} 
 \newcommand{\vl}{ { \ell}} 
 \newcommand{\vr}{{ r}} 
\newcommand{\Rm}{{\vl}^{\infty}} 
\newcommand\multiset[1]{ \{\!\!\{  #1  \}\!\!\} } 
\newcommand{\Exp}[2][]{ {\rm E}_{#1}\left[ #2 \right]}
\newcommand{\Expcond}[3][]{{\rm E}_{#1}\left[ \left. #2 ~\right|~ #3 \right]}
\newcommand{\ind}[1]{\mathbf{1}_{\left\{ #1 \right\} } } 
\newcommand{\F}{\mathcal{F}}
\newcommand{\G}{\mathcal{G}} 
\newcommand{\eqdis}{\overset{d}{=}}
\newcommand{\Rdtwo}{[-\log 2, 0)} 
\newcommand{\rr}{\zz} 
\newcommand{\SM}{\mathcal M_{+}  } 
\newcommand{\MF}{\mathcal M_{f}  } 
\newcommand{\prm}[2]{\left\langle #1, #2 \right\rangle } 
\newcommand{\ms}{{\mathcal I}  } 
\newcommand{\mLevy}{\Lambda} 
\newcommand{\mbar}{ \bar{\Lambda}} 
\newcommand{\zz}{z} 
\newcommand{\Lp}[1][\xi]{ #1^{[p]}}
\newcommand{\Zs}{\mathbf{Z}} 
\newcommand{\mmu}{\Pi} 
\newcommand{\levb}{{\tt d}} 
\newcommand{\lev}{\bar{\levb}} 
\newcommand{\cutb}[2][\levb]{ {#2}^{\{#1\}}}
\newcommand{\Ls}{\mathbf{L}} 
\newcommand{\Deltaa}{D} 
\newcommand{\Deltab}{\lambda} 
\newcommand{\dtb}{\lambda} 
\newcommand{\ft}[2]{f \left( #1, #2 \right)  } 
\newcommand{\gt}{g} 
\newcommand{\cutc}[2][\epsilon]{ {#2}^{[#1]}}
\newcommand{\Pc}{ {\mathcal{P}} }
\newcommand{\Expc}[2][]{ {\mathcal E}_{#1}\left[ #2 \right]}
\newcommand{\Expcondc}[3][]{{\mathcal E}_{#1}\left[ \left. #2 ~\right|~ #3 \right]}
\newcommand{\Xs}{ {\mathbf{X}} } 
\newcommand{\Xc}{ {\mathcal{X}} } 
\newcommand{\Exps}[2][]{\mathbf{E}_{#1}\left[ #2 \right]}
\newcommand{\Ps}{ {\mathbf{P}} }
\newcommand{\Ys}{\mathbf{Y}} 
\newcommand{\Yc}{\mathcal{Y}} 
\newcommand{\A}{A} 
\newcommand{\xbar}{\bar{z}} 
 \newcommand{\muA}{P} 
\newcommand{\XOs}{ {\mathbf{X}^{(0)}} } 
\newcommand{\XO}{ {X^{(0)}}} 
\newcommand{\Fb}{\mathcal{G}} 
\newcommand{\Pb}{\mathbb{P}} 
\newcommand{\tildeX}{ Y} 
\newcommand{\tildeP}{ Q} 
\newcommand{\tildePc}{ \mathcal Q} 
\newcommand{\tb}{ \tau } 
\newcommand{\Ws}{\mathbf{W}} 
\begin{document}
\title{\textsc{Growth-fragmentation processes and bifurcators}}
\author{Quan Shi \thanks{Institut f\"ur Mathematik, Universit\"at Z\"urich,
 Winterthurerstrasse 190, CH-8057 Z\"urich, Switzerland \protect\\ email: quan.shi@math.uzh.ch} 
 \thanks{The author thanks Jean Bertoin for suggesting this research and for his guidance throughout the work. This work is supported by the Swiss National Science Foundation 200021\_144325/1.}
}
\date{}
\maketitle
 \begin{abstract} 
Markovian growth-fragmentation processes introduced by Bertoin model a system of growing and splitting cells in which the size of a typical cell evolves as a Markov process $X$ without positive jumps. We find that two growth-fragmentation processes associated respectively with two processes $X$ and $Y$ (with different laws) may have the same distribution, if $(X,Y)$ is a {\it bifurcator}, roughly speaking, which means that they coincide up to a bifurcation time and then evolve independently. Using this criterion, we deduce that the law of a self-similar growth-fragmentation is determined by a cumulant function $\kappa$ and its index of self-similarity.  
 \end{abstract}
{\bf 2010 Mathematics Subject Classiﬁcation:} 60G51, 60J80.\\
{\bf Keywords:} growth-fragmentation, L\'evy process, self-similarity.


\section{Introduction}\label{sec:intro}
We consider the family of {\it Markovian growth-fragmentation processes} introduced by Bertoin \cite{Bertoin:growth}, see also \cite{CCF:GF, CampilloFritsch, Doumic, Fritsch} for related works. This stochastic model describes the evolution of a particle system, in which each particle may grow or decay gradually and split randomly into smaller pieces, independently of the other particles. 

It is convenient to describe it in terms of a cell population. The size of a typical cell evolves as a Markov process $X = (X(t), t\geq 0)$ with values in $[0,\infty)$, with c\`adl\`ag path and only negative jumps. The process $X$ also encodes the relationship between cell size and cell replication: at each jump time $t\geq 0$ of $X$ with $\Delta X(t) = X(t) - X(t-)<0$, a ``daughter'' cell with initial size $-\Delta X(t)$ is born, and the ``mother'' is still alive after this cell replication. Each daughter follows the same dynamics as the mother and evolves independently of the other cells.  Starting at time $0$ from a single cell with size $x>0$, we construct in this way a population of cells and thus define a process $\Xs=(\Xs(t), t\geq 0)$, where $\Xs(t)$ denotes the sizes of the cells alive at time $t\geq 0$. The process $\Xs$ is called a {\it (Markovian) growth-fragmentation process starting from $x$ associated with the cell process $X$}.

By construction, the law of $\Xs$ is determined by the law of $X$, however, growth-fragmentations driven by cell processes with different laws may have the same distribution. A first instance of such processes appears in Pitman and Winkel \cite{PitmanWinkel} with $X$ the exponential of the negative of a pure-jump subordinator (so-called {\it fragmenter} in \cite{PitmanWinkel}). The main purpose of this work is therefore to provide a sufficient condition for growth-fragmentations driven by different cell processes to have the same distribution. 
Our main result can be informally described as follows: 

{\it If there exists a coupling of (the distributions of) two cell processes $X$ and $Y$ which is a {\bf bifurcator}, in the sense that they almost surely coincide for a strictly positive time and evolve independently afterwards, then under some mild technical conditions, the growth-fragmentations driven respectively by $X$ and $Y$ have the same finite-dimensional distribution. }

This will be stated rigorously in Theorem \ref{thm:1}. The idea of bifurcator also goes back to \cite{PitmanWinkel}, which provides an explicit construction of bifurcators of fragmenters, as well as a characterization of the laws of all bifurcators of fragmenters. 

Therefore, to give a sufficient condition for two growth-fragmentations to have the same distribution, it suffices to understand when two cell processes can be coupled to form a bifurcator (in other words, when there exists a bifurcator whose two marginal distributions are the respective laws of these two cell processes). We do not have a complete answer to this question in general, however, we investigate a study of bifurcators for positive self-similar Markov processes, which further allows us to characterize the laws of growth-fragmentations driven by self-similar processes, so-called {\it self-similar growth-fragmentation processes}. 

Self-similar growth-fragmentations have been previously studied in \cite{Bertoin:growth} and have interesting applications: this model is connected with certain growth-fragmentation equations, see \cite{BertoinWatson}; besides, a distinguished case of self-similar growth-fragmentation appears as the re-scaled limit of the lengths of the cycles obtained by slicing random Boltzmann triangulations with a simple boundary at heights, see \cite{BCK:maps}.

In order to state our results, let us recall some basic facts about L\'evy processes, which are closely related to self-similar Markov processes; see e.g. \cite{Bertoin:Levy, Kyprianou}. 
Let $\xi$ be a L\'evy process with no positive jumps, which is often referred to as a {\it spectrally negative L\'evy process (SNLP)}. The SNLP $\xi$ is possibly killed at some independent exponential time. The distribution of $\xi$ is characterized by its Laplace exponent $\Phi: [0,\infty) \to \R$: 
$$\Exp{\e^{q \xi(t)}} = \e^{\Phi(q) t}, \quad \text{ for all } q, t\geq 0.$$
It is well-known that the convex function $\Phi$ is can be expressed by the L\'evy-Khintchine formula 
\begin{equation}\label{eq:LK}
\Phi(q) =-k + \frac{1}{2} \sigma^2 q^2 + c q + \int_{(-\infty,0)} \left( \e^{q \zz}-1 + q (1 - \e^{\zz})  \right) \mLevy  (d \zz), \quad q\geq 0, 
\end{equation}
 where $k\geq 0$ is the killing rate, $\sigma \geq 0$, $c \in \R$ and the L\'evy measure $\mLevy$ on $(-\infty, 0)$ satisfies 
\begin{equation}\label{eq:Levym}
 \int_{(-\infty,0)} (|\zz|^2 \wedge 1) \mLevy (d\zz) <\infty.
\end{equation}
Then we say $\xi$ is a SNLP with characteristics $(\sigma, c , \Lambda, k)$.
We also introduce $\kappa\colon [0,\infty) \to (-\infty, \infty]$ which plays an important role in this work: 
\begin{equation}\label{eq:kappa}
  \kappa (q) := \Phi(q) + \int_{(-\infty,0)} (1-\e^{\zz})^q \mLevy  (d\zz), \quad q\geq 0.
\end{equation}
So $\kappa\geq \Phi$. Note that $\kappa$ is convex and $\kappa(q)<\infty$ for all $q\geq 2$ because of \eqref{eq:Levym}. 
We stress that $\kappa$ does not characterize the law of $\xi$, see Lemma \ref{lem:triple}.  

Let $\XO := \exp(\xi)$, and we write by convention $\XO(t) = \partial$ if $\xi$ is killed before $t$, where $\partial$ denotes a cemetery point. Then the process $\XO$ is called a {\it homogeneous cell process}, which is a special case of self-similar process. Let $\tilde{X}^{(0)}:= \exp(\tilde{\xi})$, where $\tilde{\xi}$ is another SNLP with $\tilde{\kappa}$ defined as in \eqref{eq:kappa}, and write $\Xs^{(0)}$ and $\tilde{\Xs}^{(0)}$ for two growth-fragmentations associated with $\XO$ and $\tilde{X}^{(0)}$ respectively (with the same initial size of ancestor $x>0$), see Section \ref{sec:HGF} for their formal construction.
\begin{thm}[Homogeneous]\label{thm:2'}
The following statements are equivalent: 
  \begin{enumerate}[label=(\roman*)]
  \item $\kappa = \tilde{\kappa}$;
  \item $\XO$ and $\tilde{X}^{(0)}$ can be coupled to form a bifurcator;
  \item the homogeneous growth-fragmentations $\Xs^{(0)}$ and $\tilde{\Xs}^{(0)}$ have the same finite-dimensional distribution.
  \end{enumerate}
\end{thm}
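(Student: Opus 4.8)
The plan is to prove the cycle of implications $(i)\Rightarrow(ii)\Rightarrow(iii)\Rightarrow(i)$. The step $(ii)\Rightarrow(iii)$ is the easiest: it is an immediate consequence of the general bifurcator principle of Theorem~\ref{thm:1}, applied to the two homogeneous cell processes $\XO$ and $\tilde X^{(0)}$, once one checks that these processes satisfy the mild technical hypotheses required there (integrability/non-degeneracy conditions on $\kappa$ and $\tilde\kappa$, which hold since $\kappa,\tilde\kappa<\infty$ on $[2,\infty)$ by \eqref{eq:Levym}). So the real content is in constructing the bifurcator and in the converse direction.

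For $(i)\Rightarrow(ii)$, I would give an explicit construction of a coupling. Since $\XO=\exp(\xi)$ and $\tilde X^{(0)}=\exp(\tilde\xi)$ are multiplicative, it suffices to couple the SNLPs $\xi$ and $\tilde\xi$ so that they agree on a random initial time interval $[0,\tau)$ with $\tau>0$ a.s.\ and then evolve independently. The natural device is an interlacing/Itô-synthesis construction: decompose each of $\xi$ and $\tilde\xi$ into its Gaussian-plus-drift part together with its jump part driven by a Poisson random measure on $(0,\infty)\times(-\infty,0)$ with intensity $dt\otimes\mLevy$ (resp.\ $dt\otimes\tilde\mLevy$); then do a change of measure of the jumps via the map $z\mapsto\log(1-e^{z})$ so that a jump of size $z$ of $\xi$ corresponds to the birth of a daughter of relative size $e^{z}$. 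The key algebraic observation, which is exactly what the definition \eqref{eq:kappa} of $\kappa$ encodes, is that the ``mother after a split'' and the ``daughter after a split'' together carry the same information regardless of whether the triple is $(\sigma,c,\mLevy,k)$ or $(\tilde\sigma,\tilde c,\tilde\mLevy,\tilde k)$, provided $\kappa=\tilde\kappa$ (cf.\ Lemma~\ref{lem:triple}). Concretely, I would run a single driving randomness up to the first jump whose size, read in the $\kappa$-coordinate, distinguishes the two parametrizations; one shows that this bifurcation time is a.s.\ strictly positive because $\kappa=\tilde\kappa$ forces the Laplace exponents, the drifts, and the small-jump behaviour to match closely enough that no instantaneous disagreement can occur. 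After that time, the two processes are fed independent randomness. Verifying that the resulting pair is genuinely a bifurcator in the precise sense used before Theorem~\ref{thm:1} (Markov property at the bifurcation time, correct marginals, strict positivity of $\tau$) is where most of the careful work lies.

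For $(iii)\Rightarrow(i)$, the idea is to read off $\kappa$ from the one-dimensional (or two-point) marginals of the growth-fragmentation. The standard tool is the many-to-one / spine formula: for a suitable test function and $q$ in a range where $\kappa(q)<\infty$, one has an identity of the form $\Exp{\sum_{i} \Xs^{(0)}_i(t)^{q}} = x^{q}\e^{\kappa(q)t}$ (and similarly with tilde), valid for all $q\ge 2$, say. If $\Xs^{(0)}$ and $\tilde\Xs^{(0)}$ have the same finite-dimensional distributions, then the left-hand sides agree for every $t\ge0$ and every such $q$, whence $\kappa(q)=\tilde\kappa(q)$ for all $q\ge2$; convexity and analyticity of $\kappa,\tilde\kappa$ on the interior of their domains, together with continuity at the left endpoint, then upgrade this to $\kappa\equiv\tilde\kappa$ on $[0,\infty)$. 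One subtlety to handle here is integrability of $\sum_i\Xs^{(0)}_i(t)^{q}$, which is precisely why the many-to-one formula is typically stated for $q\ge2$ (using \eqref{eq:Levym}); restricting attention to such $q$ is harmless for the conclusion.

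\textbf{Main obstacle.} I expect the crux to be $(i)\Rightarrow(ii)$: turning the algebraic identity $\kappa=\tilde\kappa$ into an \emph{honest} probabilistic coupling that is a bifurcator. The difficulty is twofold --- first, matching not only the jump structure (which $\kappa$ controls via $\int(1-e^{z})^{q}\,\mLevy(dz)$) but also the Gaussian coefficient $\sigma$, the drift $c$, and the killing rate $k$, which enter $\Phi$ hence $\kappa$, so that the ``coincide for a strictly positive time'' clause is not violated at time $0$; and second, making precise the sense in which the two processes ``evolve independently afterwards'' so that the hypotheses of Theorem~\ref{thm:1} are literally met. The construction via Poisson random measures and the change of variables $z\mapsto\log(1-e^{z})$ is the natural route, and Lemma~\ref{lem:triple} should supply exactly the needed structural statement about which triples share a given $\kappa$; the work is in assembling these into a clean coupling and checking the Markov/independence properties at the bifurcation time.
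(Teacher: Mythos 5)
Your overall cycle $(i)\Rightarrow(ii)\Rightarrow(iii)\Rightarrow(i)$ is the right skeleton and matches the paper's logic. The directions $(ii)\Rightarrow(iii)$ (via Theorem~\ref{thm:1} after checking \ref{H} and \ref{Heta} for $\XO$, $\tilde{X}^{(0)}$) and $(iii)\Rightarrow(i)$ (reading $\kappa$ off the one-dimensional moment identity $\Exps{\sum x^q}=\e^{\kappa(q)t}$) are essentially what the paper does; for the latter the paper routes through Proposition~\ref{prop:1}, identifying $\log\Xs^{(0)}$ as the branching L\'evy process with cumulant $\kappa$, which is where the moment formula actually comes from and where its integrability for $q\geq 2$ is established.

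The genuine gap is in your $(i)\Rightarrow(ii)$. You have the right ingredients (Lemma~\ref{lem:triple}, the involution $z\mapsto\bar{z}=\log(1-\e^z)$, fresh independent randomness after $\tau$), but the coupling mechanism you describe is not the one that works. You frame it as ``run a single driving randomness up to the first jump whose size \ldots distinguishes the two parametrizations,'' which suggests a deterministic first-disagreement time. That picture cannot produce an SNLP with the correct second marginal: when the two L\'evy measures differ on a set of positive $\mLevy$-mass, there is no single ``distinguishing'' jump size, and a deterministic rule does not yield a bifurcation time that is exponentially distributed with a finite rate. What is actually needed is the \emph{switching transformation} of Lemma~\ref{lem:st} and Proposition~\ref{prop:st}: each jump $z$ of $\xi$ is independently marked with probability $p(z)=\mbar_\gamma(dz)/(\mLevy_\gamma(dz)+\mbar_\gamma(dz))$, and every marked jump is replaced by $\bar z$; this yields a process $\Lp[\xi]\eqdis\tilde\xi$ that agrees with $\xi$ up to the first marked jump with $z\neq-\log 2$, a time that is exponential with finite parameter $\int p(z)\,\mLevy(dz)$ precisely because of \eqref{eq:st_H}. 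The bifurcator is then obtained by grafting an independent fresh copy of $\tilde\xi$ at that time (Lemma~\ref{lem:bifH}). Relatedly, your assertion that ``$\kappa=\tilde\kappa$ forces the Laplace exponents, the drifts, and the small-jump behaviour to match'' is false: Lemma~\ref{lem:triple} shows exactly that the Laplace exponents and drifts \emph{can} differ, with only $\sigma$, $k$, the combination $\mLevy+\mbar$, and a compensated drift being forced to coincide. The randomized marking is precisely the device that reconciles genuinely different L\'evy measures sharing the same $\kappa$, and it is the piece missing from your construction.
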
 This result partially encompasses Proposition 5 and Corollary 25 in \cite{PitmanWinkel}. 
We hence say that the growth-fragmentation $\XOs$ is a {\it homogeneous growth-fragmentation process with characteristic $\kappa$}. The function $\kappa$ serves as {\it cumulant} for $\XOs$, in the sense that
$$\Exps{\sum_{x\in \XOs(t)} x^q} = \exp\left(\kappa(q) t\right) \quad \text{for all } q\geq 2 \text{ and } t\geq 0,$$ 
which is proved in Proposition \ref{prop:1}. 

In general, a self-similar cell process with index $\alpha \in \R$ is associated with a L\'evy process by Lamperti's representation \cite{Lamperti} as follows. Let us define a time-change by
\begin{equation}
  \tau_t^{(\alpha)} := \inf\left\{r\geq 0~:~ \int_0^r \exp(-\alpha \xi(s))ds \geq t \right\}, \quad t\geq 0,
\end{equation}
with the convention that $\exp(-\alpha \xi(s)) = 0$ if $\xi$ is killed before $s$. For every $x>0$, let us denote by $\muA_x$ the law of the process
\begin{equation}\label{eq:Lamperti}
 X^{(\alpha)}(t):= x \exp (\xi(\tau^{(\alpha)}_{tx^{\alpha}})), \quad t\geq 0,
\end{equation}
with the convention that $X^{(\alpha)}(t)= \partial$ for every $t \geq x^{-\alpha}\int_0^{\infty} \exp(-\alpha \xi(s))ds$. 
We know from  \cite{Lamperti} that for every $c>0$, 
\begin{equation}\label{eq:ss}
  \text{the law of } (c X^{(\alpha)}(c^{\alpha}t),t\geq 0) \text{ under } \muA_x \text{ is }\muA_{cx},
\end{equation}
so we call $X^{(\alpha)}$ a {\it self-similar cell process with index $\alpha$} \footnote{The way we define the index of self-similarity $\alpha$ is coherent with the theory of self-similar fragmentations. However, we stress that in the theory of self-similar processes, it is rather $-\alpha$ which is called the index of self-similarity.}. 
If $\alpha=0$, then we simply have 
$X^{(0)}= x \exp(\xi)$ under $P_x$, which is indeed a homogeneous cell process. 
 
For $\alpha\neq 0$, we further need to assume that 
\begin{equation}\label{eq:sspH}
  \text{there exists } q>0 \text{ with } \kappa(q)<0. 
\end{equation}  
Let us construct a growth-fragmentation $\Xs^{(\alpha)}$ associated with $X^{(\alpha)}$ starting from an ancestor cell with initial size $x>0$, then \eqref{eq:sspH} is a natural assumption that ensures the {\it non-explosion} of the growth-fragmentation $\Xs^{(\alpha)}$, which means that for every time $t\geq 0$ the elements of $\Xs^{(\alpha)}(t)$ are locally finite, see \cite{Bertoin:growth}. It is also known from a recent work \cite{BS} that if $\kappa(q)> 0$ for all $q\geq 0$ and $\alpha \neq 0$, then the growth-fragmentation $\Xs^{(\alpha)}$ explodes in finite time.     
Under \eqref{eq:sspH}, it is known from Theorem 2 in \cite{Bertoin:growth} that $\Xs^{(\alpha)}$ keeps the self-similarity: 
recall that $\Xs^{(\alpha)}$ starts from an ancestor with initial size $x$, then for every $c>0$, the law of $(c \Xs^{(\alpha)}(c^{\alpha}t),t\geq 0)$ is the same as a growth-fragmentation associated with $X^{(\alpha)}$ starting from $cx$. So we call $\Xs^{(\alpha)}$ {\it a self-similar growth-fragmentation with index $\alpha$}. 

Let us now present our main result for the self-similar case. 
Denote $\tilde{X}^{(\tilde{\alpha})}$ for the self-similar cell process of index $\tilde{\alpha}\in \R$ associated with $\tilde{\xi}$ by Lamperti's representation \eqref{eq:Lamperti} and let $\tilde{\Xs}^{(\tilde{\alpha})}$ be the growth-fragmentation driven by $\tilde{X}^{(\tilde{\alpha})}$. Suppose that the respective ancestors of $\tilde{\Xs}^{(\tilde{\alpha})}$ and $\Xs^{(\alpha)}$ have the same initial size $x>0$. 
\begin{thm}[Self-similar]\label{thm:2}
Suppose that \eqref{eq:sspH} holds for both $\kappa$ and $\tilde{\kappa}$, then the following statements are equivalent: 
  \begin{enumerate}[label=(\roman*)]
  \item $\kappa = \tilde{\kappa}$ and $\alpha=\tilde{\alpha}$;
  \item $X^{(\alpha)}$ and $\tilde{X}^{(\tilde{\alpha})}$ can be coupled to form a bifurcator; 
 \item the self-similar growth-fragmentations $\Xs^{(\alpha)}$ and $\tilde{\Xs}^{(\tilde{\alpha})}$ have the same finite-dimensional distribution.
  \end{enumerate}
\end{thm}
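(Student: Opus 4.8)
The plan is to prove Theorem~\ref{thm:2} by reducing the self-similar case to the already-established homogeneous case (Theorem~\ref{thm:2'}) together with a careful analysis of how Lamperti's time-change and the associated growth-fragmentation construction interact. The implication (ii)$\Rightarrow$(iii) should follow directly from the general bifurcator criterion of Theorem~\ref{thm:1}, so the substance lies in (i)$\Rightarrow$(ii) and (iii)$\Rightarrow$(i).

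\textbf{Step 1: (i)$\Rightarrow$(ii).} Assume $\kappa=\tilde\kappa$ and $\alpha=\tilde\alpha$. By Theorem~\ref{thm:2'}, since $\kappa=\tilde\kappa$, the underlying homogeneous cell processes $\XO=\exp(\xi)$ and $\tilde X^{(0)}=\exp(\tilde\xi)$ can be coupled to form a bifurcator: there is a coupling in which, after a strictly positive bifurcation time $\tb$, the two processes $\xi$ and $\tilde\xi$ evolve independently, while agreeing before $\tb$. The key point is that Lamperti's time-change $t\mapsto \tau_t^{(\alpha)}$ depends on the L\'evy path only through $\int_0^{\cdot}\exp(-\alpha\xi(s))\,ds$, which is a path functional that is measurable with respect to the path up to the current time; hence on the event $\{\xi=\tilde\xi \text{ on } [0,\tb]\}$ the time-changes agree on a corresponding (random, strictly positive) initial interval, and the self-similar cell processes $X^{(\alpha)}$ and $\tilde X^{(\alpha)}$ built from this coupling also coincide up to a strictly positive time and evolve independently afterward. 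I would need to check that the bifurcation time for the time-changed processes remains almost surely strictly positive --- this is immediate because $\tau^{(\alpha)}$ is a continuous, strictly increasing time-change started from $0$ --- and that the post-bifurcation independence is preserved, which follows from independence of the increments of the Lamperti transform given the pre-$\tb$ path. So $(X^{(\alpha)},\tilde X^{(\alpha)})$ is a bifurcator.

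\textbf{Step 2: (iii)$\Rightarrow$(i), and (ii)$\Rightarrow$(iii).} For (ii)$\Rightarrow$(iii) I invoke Theorem~\ref{thm:1} directly: under \eqref{eq:sspH} the self-similar growth-fragmentation is non-explosive, so the mild technical conditions of Theorem~\ref{thm:1} hold, and a bifurcator coupling of the cell processes forces equality of the finite-dimensional distributions of the growth-fragmentations. For the converse (iii)$\Rightarrow$(i), the idea is to recover $\kappa$ and $\alpha$ from the law of $\Xs^{(\alpha)}$. First, $\alpha$ is read off from the self-similarity scaling relation: if $\Xs^{(\alpha)}$ and $\tilde\Xs^{(\tilde\alpha)}$ have the same finite-dimensional distributions and each satisfies its own scaling identity $(c\,\Xs^{(\alpha)}(c^{\alpha}t))\overset{d}{=}$ (version from $cx$), then comparing the two scaling relations (e.g.\ by letting the ancestor size vary, or by extracting the total-mass or extremal-particle process and examining its time-scaling) pins down $\alpha=\tilde\alpha$. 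Once $\alpha=\tilde\alpha$, one undoes the Lamperti time-change: equality of finite-dimensional distributions of $\Xs^{(\alpha)}$ and $\tilde\Xs^{(\alpha)}$ should, after the deterministic (path-dependent) time-substitution, yield equality of finite-dimensional distributions of the homogeneous growth-fragmentations $\XOs$ and $\tilde\Xs^{(0)}$ --- here I would use that the growth-fragmentation associated with $X^{(\alpha)}$ can be obtained from the homogeneous one by applying the same self-similar time-change to every cell, as in \cite{Bertoin:growth}. Then Theorem~\ref{thm:2'} gives $\kappa=\tilde\kappa$. Alternatively, and perhaps more cleanly, one can use the cumulant identity: from Proposition~\ref{prop:1}-type computations adapted to the self-similar case one extracts $\kappa$ directly from, say, the behaviour of $\Exps{\sum_{x\in\Xs^{(\alpha)}(t)} x^q}$ as $t\to 0$, which only sees $\kappa$ and $\alpha$.

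\textbf{Main obstacle.} The delicate point is (iii)$\Rightarrow$(i): disentangling $\kappa$ from $\alpha$ using only finite-dimensional distributions of the growth-fragmentation, and justifying that the self-similar time-change can be inverted at the level of the whole cell population rather than just a single cell. The per-cell Lamperti change is clean, but a growth-fragmentation involves countably many cells each with its own (correlated) clock, so one must argue that equality of laws of the full configurations $\Xs^{(\alpha)}(t)$ for all $t$ is equivalent to equality of laws after the time-substitution; the cleanest route is probably to bypass this by identifying $\alpha$ and $\kappa$ through scaling exponents and the short-time cumulant asymptotics rather than through an explicit inversion, and I expect that is where the real work goes.
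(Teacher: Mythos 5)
Your Steps~1 and~(ii)$\Rightarrow$(iii) are sound and essentially parallel to the paper's argument: the paper also builds the self-similar bifurcator by coupling $\xi$ and $\gamma$ via a switching transform (Proposition~\ref{prop:st}), applying Lamperti's time-change, and then grafting an independent copy of $Y^{(\alpha)}$ after the bifurcation time (Lemma~\ref{lem:bif}); and it too verifies \ref{H} and \ref{Heta} for $X^{(\alpha)}$ (Lemma~\ref{lem:ssp}) so that Theorem~\ref{thm:1} applies. Whether you time-change the already-built homogeneous bifurcator or do the independent-copy substitution after time-changing makes no real difference.

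The genuine gap is in (iii)$\Rightarrow$(i), specifically in recovering $\kappa=\tilde\kappa$. You correctly identify the obstacle --- you cannot ``undo'' Lamperti's time-change at the level of the growth-fragmentation configuration, because the per-cell clocks depend on genealogical information that the multiset-valued process $(\Xs^{(\alpha)}(t))_{t\geq 0}$ does not retain --- but the proposed fallback does not actually work either. For $\alpha\neq 0$ there is no closed-form cumulant identity of the type $\Exps{\sum_{x\in\Xs^{(\alpha)}(t)}x^q}=\e^{\kappa(q)t}$, and extracting $\kappa$ from a short-time expansion would require establishing differentiability at $t=0+$ and identifying the derivative with $\kappa(q)$, which you do not do. The paper instead invokes a specific result from \cite{BCK:martingales} (Proposition~5 there): the potential formula
$$\Exps[1]{\int_0^{\infty} \Bigl(\sum_{y\in \Xs^{(\alpha)}(t)} y^{q+\alpha}\Bigr)\,dt}=-\frac{1}{\kappa(q)}\quad\text{whenever }\kappa(q)<0,$$
and $=\infty$ otherwise. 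This pins down $\kappa(q)$ on $\{q:\kappa(q)<0\}$ directly from the law of $\Xs^{(\alpha)}$. One then still has to handle the case where $\kappa(q)>0$ for all $q$ larger than its largest root $\omega$; the paper does this by noting that the potential formula identifies $\omega$ and then appealing to Theorem~7 of \cite{BCK:martingales}, which determines $\kappa$ on $(\omega,\infty)$ from the law of $\Xs^{(\alpha)}$. Without these two inputs, your (iii)$\Rightarrow$(i) has no rigorous mechanism to conclude $\kappa=\tilde\kappa$.
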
  
Therefore, the law of the self-similar growth-fragmentation $\Xs^{(\alpha)}$ is characterized by $(\kappa, \alpha)$. Note that it follows immediately from the self-similarity that if $\tilde{\Xs}^{(\tilde{\alpha})}$ and $\Xs^{(\alpha)}$ have the same finite-dimensional distribution, then $\alpha=\tilde{\alpha}$.

Let us outline our proofs.
For the homogeneous case, Theorem \ref{thm:2'}, we provide a direct proof of the equivalence $(i) \Leftrightarrow (iii)$, by drawing a connection between homogeneous growth-fragmentations and {\it branching L\'evy processes} introduced in \cite{Bertoin:CF}. However, this proof cannot be easily extended to the self-similar case. 
Nevertheless, we can deduce the implication $(iii) \Rightarrow (i)$ in Theorem \ref{thm:2} from the self-similarity mentioned above and a study of martingales in self-similar growth-fragmentations in \cite{BCK:martingales}. 
Further, we can construct a bifurcator of $X^{(\alpha)}$ and $\tilde{X}^{(\tilde{\alpha})}$ when $\kappa = \tilde{\kappa}$ and $\alpha=\tilde{\alpha}$ by extending the approach of Pitman and Winkel \cite{PitmanWinkel} and using Lamperti's transformation, which means $(i) \Rightarrow (ii)$. This motivates us to establish the general sufficient condition, Theorem \ref{thm:1}, which is informally stated above. We hence get the implication $(ii) \Rightarrow (iii)$ and complete the proof. 

Besides the class of self-similar processes associated with L\'evy processes by Lamperti's transformations, the stationary processes driven by L\'evy processes, exponential Ornstein-Uhlenbeck type processes (see e.g. \cite{Sato}), are also natural examples for cell processes. The techniques developed in this paper also open the way to study the growth-fragmentations associated with exponential Ornstein-Uhlenbeck type processes, which will be discussed in a subsequent work.

\paragraph{Organization of the paper}
The rest of this work is organized as follows. We start with working on homogeneous growth-fragmentations in Section \ref{sec:CFP}. We first study the bifurcators of homogeneous processes, and then characterize the laws of homogeneous growth-fragmentations by using their connections with branching L\'evy processes. 

In Section \ref{sec:cell}, we first provide a non-explosion condition of general Markovian growth-fragmentations, then we introduce the notion of bifurcators for general cell processes and establish our main result, Theorem \ref{thm:1}, a general sufficient condition for two growth-fragmentations to have the same law. Applying Theorem \ref{thm:1}, we complete the proofs of Theorem \ref{thm:2'} and Theorem \ref{thm:2}. 

\section{The homogeneous case}\label{sec:CFP}
Throughout the rest of this work, we denote by $\xi$ and $\gamma$ two SNLPs with respective characteristics $(\sigma, c  , \Lambda , k )$ and $(\sigma_{\gamma}, c_{\gamma} , \Lambda_{\gamma}, k_{\gamma})$, and define $\kappa $ and $\kappa_{\gamma}$ respectively for $\xi$ and $\gamma$ as in \eqref{eq:kappa}.
We also define  
\begin{equation}\label{eq:xbar}
  \xbar:=   \log (1- \e^z), \quad  z\in (-\infty, 0), 
\end{equation}
so that $\e^z + \e^{\bar{z}}=1$. Note that $z \mapsto \xbar$ is an involution, i.e. $\bar{\xbar}=z$. For every L\'evy measure $\mLevy$, we write $\bar{\mLevy}$ for the push-forward measure of $\mLevy$ via the map $z \mapsto \xbar$. We remark that it follows from \eqref{eq:Levym} that 
\begin{equation}\label{eq:mbar}
 \mLevy((-\infty,-\log 2]) < \infty \quad \text{ and }\quad  \bar{\mLevy}([-\log 2, 0))< \infty.
\end{equation}

This section is concerned with growth-fragmentations driven by homogeneous cell processes, and our investigation is consist of two parts. We first depict the structure of the family of SNLPs that have the same $\kappa$ in Section \ref{sec:sw}, specifically, we show that they can be derived from each other by the {\it switching transformations}, which are introduced by Pitman and Winkel \cite{PitmanWinkel} to study the bifurcators of fragmenters. We next show that the law of a homogeneous growth-fragmentation associated with $\exp(\xi)$ is characterized by $\kappa$. In this direction, we recall the construction of branching L\'evy processes introduced by Bertoin \cite{Bertoin:CF} in Section \ref{sec:BLP} and then  build a connection between homogeneous growth-fragmentations and branching L\'evy processes in Section \ref{sec:HGF}.
These two results motivate us to extend the conception of bifurcator to general Markov processes and to study the relations between bifurcators and Markovian growth-fragmentations, which will become the object of investigation in Section \ref{sec:cell}.  

We will often appeal to the following relation between the SNLPs that have the same $\kappa$ in terms of their characteristics. 
\begin{lem}\label{lem:triple}
There is $\kappa  =  \kappa_{\gamma}$, if and only if 
\begin{equation}\label{eq:triple}
 \mLevy  + \mbar  = \mLevy_{\gamma} + \mbar_{\gamma}, \quad \sigma = \sigma_{\gamma}, \quad c + \int_{(-\infty,-\log 2)}(1- 2\e^{\zz}) \mLevy (d\zz)  = c_{\gamma} + \int_{(-\infty,-\log 2)}(1- 2\e^{\zz}) \mLevy_{\gamma} (d\zz), \quad k  = k_{\gamma}.
\end{equation}
\end{lem}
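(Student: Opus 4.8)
The plan is to prove the equivalence by direct computation with the Lévy–Khintchine representation \eqref{eq:LK}, splitting the defining integral in \eqref{eq:kappa} over the regions $(-\infty,-\log 2)$ and $[-\log 2, 0)$ and tracking how the change of variables $z\mapsto\bar z$ exchanges these two regions. Concretely, I would start from the identity
\[
\kappa(q) = -k + \tfrac12\sigma^2 q^2 + cq + \int_{(-\infty,0)}\!\Big(\e^{qz}-1+q(1-\e^z)\Big)\mLevy(dz) + \int_{(-\infty,0)}(1-\e^z)^q\,\mLevy(dz),
\]
and observe that $(1-\e^z)^q = \e^{q\bar z}$, so the last integral equals $\int_{(-\infty,0)}\e^{q\bar z}\mLevy(dz)=\int_{(-\infty,0)}\e^{qw}\bar\mLevy(dw)$. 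The first integrand, $\e^{qz}-1+q(1-\e^z)=\e^{qz}-1-q\e^z+q$, when pushed forward by $z\mapsto\bar z$ gives $\e^{q\bar z}-1-q\e^{\bar z}+q = \e^{q\bar z}-1-q(1-\e^z)+q$ since $\e^{\bar z}=1-\e^z$. The point is that, writing everything against the combined measure $\mLevy+\bar\mLevy$, the ``bad'' terms that blow up near $0$ (namely $\e^{qz}-1$ near $z=0$, which is $O(z)$ and hence not integrable against $\mLevy$ alone but is against $\bar\mLevy$ near $0$, and vice versa) reorganize into a form that is manifestly a function of $\mLevy+\bar\mLevy$ up to an explicit linear-in-$q$ correction.

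The key algebraic step is to rewrite
\[
\kappa(q) = -k + \tfrac12\sigma^2 q^2 + \Big(c + \int_{(-\infty,-\log 2)}(1-2\e^z)\,\mLevy(dz)\Big)q + \int_{(-\infty,0)} h_q(z)\,(\mLevy+\bar\mLevy)(dz),
\]
where $h_q$ is a \emph{fixed} function (independent of the characteristics), chosen so that near $z\to 0^-$ it behaves like $\e^{qz}-1-qz = O(z^2)$ — hence integrable against $\mLevy+\bar\mLevy$ by \eqref{eq:Levym} and \eqref{eq:mbar} — while near $z\to-\infty$ it behaves like a bounded quantity, again integrable by \eqref{eq:mbar}. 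To find $h_q$ one symmetrizes: the $\mLevy$-part contributes $\e^{qz}-1-q\e^z+q$ plus $\e^{q\bar z}$, and after adding the compensating linear term $\int_{(-\infty,-\log 2)}(1-2\e^z)\mLevy(dz)\cdot q$ and using $\mathbf 1_{(-\infty,-\log 2)}(z) + \mathbf 1_{(-\infty,-\log 2)}(\bar z) + (\text{overlap at } z=\bar z=-\log 2) = 1$ a.e., the $q$-linear terms recombine so that $h_q$ is the same on both pieces. Once $\kappa$ is written in this canonical form, the ``if'' direction is immediate: \eqref{eq:triple} says precisely that the four quantities $(\sigma,\ c+\int(1-2\e^z)\mLevy,\ \mLevy+\bar\mLevy,\ k)$ agree for $\xi$ and $\gamma$, hence $\kappa=\kappa_\gamma$.

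For the ``only if'' direction, given $\kappa=\kappa_\gamma$ I would read off the characteristics from the analytic behaviour of $\kappa$: the killing rate is $k=-\kappa(0)$ (note $h_q(z)\to 0$ as $q\to 0$, or rather $h_0\equiv 0$ by construction); the quadratic coefficient $\tfrac12\sigma^2$ is recovered as $\lim_{q\to\infty}\kappa(q)/q^2$ — here one needs that $\int h_q\,d(\mLevy+\bar\mLevy) = o(q^2)$, which follows by dominated convergence after checking $h_q(z)/q^2\to 0$ pointwise with an integrable bound; then the combined measure $\mLevy+\bar\mLevy$ is determined by the remaining integral term (e.g. via a Laplace/Lévy–Khintchine uniqueness argument, or by subtracting off the $k$, $\sigma$ and linear parts and invoking uniqueness of the Lévy measure in the exponent); and finally the linear coefficient $c+\int_{(-\infty,-\log 2)}(1-2\e^z)\mLevy(dz)$ is whatever is left. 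So all four conditions in \eqref{eq:triple} follow.

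The main obstacle is the bookkeeping at the two endpoints: one must be careful that the indicator decomposition $\mathbf 1_{(-\infty,-\log 2)}(z)+\mathbf 1_{(-\infty,-\log 2)}(\bar z)=1$ holds for $\mLevy$-almost every $z$ (the single point $z=-\log 2$ where $z=\bar z$ has $\mLevy$-measure zero or can be absorbed, and $\mLevy(\{-\log 2\})$ is finite by \eqref{eq:mbar} in any case), and that every integral written down actually converges — the cut at $-\log 2$ is chosen exactly so that $\mLevy((-\infty,-\log 2])<\infty$ and $\bar\mLevy([-\log 2,0))<\infty$, which makes the $q$-linear compensator $\int_{(-\infty,-\log 2)}(1-2\e^z)\mLevy(dz)$ finite and lets us freely move the $q(1-\e^z)$ and $q$ terms between the two regions. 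Once that is set up cleanly, the rest is a routine matching of coefficients.
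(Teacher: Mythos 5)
Your ``if'' direction is essentially the same ``straightforward calculation'' the paper alludes to, made explicit via a canonical form $\kappa(q)=-k+\tfrac12\sigma^2q^2+(c+\int_{(-\infty,-\log 2)}(1-2\e^z)\mLevy(dz))q+\int h_q\,d(\mLevy+\mbar)$. That part of the plan is sound: the antisymmetric part (under $z\leftrightarrow\bar z$) of $\e^{qz}+\e^{q\bar z}-1+q(1-\e^z)$ is exactly $q(1-2\e^z)$, which is absorbed by the compensator $q\,\ind{z<-\log 2}(1-2\e^z)$, leaving a symmetric integrand whose $\mLevy$-integral depends only on $\mLevy+\mbar$. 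One finds
\[
h_q(z)+h_q(\bar z)=\e^{qz}+\e^{q\bar z}-1+\tfrac q2-\tfrac q2\,\bigl|1-2\e^z\bigr|,
\]
which is $O(z^2)$ near $0$ and bounded near $-\infty$, hence integrable.

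The ``only if'' direction, however, has a concrete gap. You claim $k=-\kappa(0)$ and justify it by ``$h_0\equiv 0$ by construction.'' This is false: from the formula above, $h_0(z)+h_0(\bar z)\equiv 1$ for \emph{every} $z$, so whatever symmetric splitting you pick, $h_0$ is not zero. Consequently $\kappa(0)=-k+\mLevy((-\infty,0))$, and since $(1-\e^z)^q\sim|z|^q$ as $z\to 0^-$, the defining integral for $\kappa(q)$ diverges for $q<2$ whenever $\mLevy$ is infinite; the paper only asserts $\kappa(q)<\infty$ for $q\geq 2$. So $\kappa(0)$ may well be $+\infty$ and cannot be used to read off $k$. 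The order of your recovery scheme is also wrong for the same reason: $k$ must come \emph{last}, once $\sigma$, $\mLevy+\mbar$ and the linear coefficient are known (e.g.\ by subtracting them from $\kappa(q)$ at any $q\geq 2$). The paper sidesteps the whole issue by differentiating three times at $q>2$, where $\kappa$ is analytic: $\kappa'''(q)=\int\e^{qz}z^3(\mLevy+\mbar)(dz)$ is a genuine Laplace transform, so injectivity gives $\mLevy+\mbar=\mLevy_\gamma+\mbar_\gamma$ directly, and then $\sigma$, the linear constant, and $k$ drop out in turn from $\kappa''$, $\kappa'$, $\kappa$. That route never touches $q<2$, and it replaces your informal ``Lévy--Khintchine uniqueness'' for the nonstandard kernel $h_q$ by a clean Laplace-transform uniqueness for the polynomial-weighted kernel $\e^{qz}z^3$. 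If you want to keep your architecture, fix the order (recover $\sigma$, $\mLevy+\mbar$, $c+\int_{(-\infty,-\log 2)}(1-2\e^z)\mLevy$, then $k$) and drop the $h_0\equiv 0$ claim; but the derivative argument is shorter and avoids the integrability pitfall entirely.
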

\begin{proof}
It is easy to check the {\it if} part by straightforward calculation. 
We now prove the {\it only if} part. If $\kappa =  \kappa_{\gamma}$, then the third order derivatives of $\kappa(q)$ and $\kappa_{\gamma}(q)$ are equal for every $q>2$, i.e. 
$$ \int_{(-\infty,0)} \left( \e^{q \xbar} \xbar^3 + \e^{q \zz} \zz^3 \right) \mLevy  (d\zz) = \int_{(-\infty,0)} \left( e^{q \xbar} \xbar^3 + \e^{q \zz}\zz^3 \right) \mLevy_{\gamma} (d\zz). $$
Therefore, for every $q>2$ there is 
$$\int_{(-\infty,0)} \e^{q\zz} \zz^3 \left( \mLevy (d\zz) + \bar{\mLevy}(d\zz) \right) =  \int_{(-\infty,0)} \e^{q\zz} \zz^3 \left( \mLevy_{\gamma} (d\zz) + \bar{\mLevy}_{\gamma}(d\zz) \right),$$
which implies that $\mLevy  + \mbar  = \mLevy_{\gamma} + \mbar_{\gamma}$. 
Iterating this argument over the lower order derivatives of $\kappa $ and $\kappa_{\gamma}$, we obtain the other identities in turn. 
\end{proof}

\subsection{Switching transformations and bifurcators}\label{sec:sw}
In order to give a construction of bifurcators of homogeneous cell processes, we now generalize the switching transformations between fragmenters in \cite{PitmanWinkel} to SNLPs.
Let $\xi$ be a SNLP with characteristics $(\sigma , c , \mLevy , k )$ and $p: (-\infty, 0) \to [0,1]$ be a measurable function, which will serve as switching probability, such that 
\begin{equation}\label{eq:st_H}
\int_{(-\infty,0)}   p(\zz) ~\mLevy ( d\zz)   < \infty.
\end{equation} 
We shall derive another SNLP $\Lp[\xi]$ from $\xi$ by switching according to $p$ in the following way. 
At each jump time $t> 0$ of $\xi$ with $\zz:= \Delta \xi(t)= \xi(t) - \xi(t-)< 0$, we {\it mark} this jump time with success probability $p(\zz)$ (so with failure probability $1-p(z)$ we do not mark it), independently of the other jumps. 
We thus define a point process by the marked jumps:
\begin{equation}
  \Delta_1 (t): =\begin{cases}
     \Delta \xi(t) & \text{if $t$ is a marked time},  \\
     0 & \text{otherwise}.
  \end{cases}
\end{equation}
Implicitly, the killing time $\zeta $ is never marked. We stress that the number of marked jump times is locally finite if and only if \eqref{eq:st_H} holds. Indeed, observing from the property of L\'evy processes (see e.g. \cite{Bertoin:Levy}) that $(\Delta \xi(t), t\geq 0)$ is a Poisson point process with characteristic measure $\mLevy $, we have that $\Delta_1$ is a Poisson point process with characteristic measure $\Lambda_1(d\zz):= p(\zz)\mLevy (d\zz)$. Next, we define a point process $\bar{\Delta}_1$ associated with $\Delta_1$ by 
\begin{equation}
  \bar{\Delta}_1 (t): =\begin{cases}
    \log (1 - \e^{\Delta_1(t)}) & \text{if } \Delta_1(t) \neq 0,  \\
     0 & \text{if } \Delta_1(t) = 0.
  \end{cases}
\end{equation}
Then $\bar{\Delta}_1$ is a Poisson point process with characteristic measure $\bar{\Lambda}_1(d\zz) := p(\xbar)\mbar (d\zz)$, where $\mbar $ is the image of $\mLevy $ by the map $\zz \mapsto \xbar$. Therefore, as \eqref{eq:st_H} holds, the processes
$$\xi_1(t) := \sum_{s\leq t} \Delta_1 (s)\quad \text{ and }\quad \bar{\xi}_1(t):= \sum_{s\leq t} \bar{\Delta}_1 (s)$$ 
are compound Poisson processes with respective (finite) L\'evy measures $\Lambda_1$ and $\bar{\Lambda}_1$. We finally define {\it the switching transform of $\xi$ according to $p$} by the process 
$$\Lp[\xi]:= \xi -  \xi_1 + \bar{\xi}_1.$$
\begin{lem}\label{lem:st}
Let $\xi$ be a SNLP with characteristics $(\sigma , c , \mLevy , k )$ and $p: (-\infty, 0) \to [0,1]$ be a measurable function that satisfies \eqref{eq:st_H}. Then the switching transform $\Lp$, derived from $\xi$ according to $p$, is a SNLP with characteristics 
 \begin{equation}\label{eq:st}\begin{cases} 
\Lp[\sigma] &:= \sigma, \\
 \Lp[\mLevy] (d\zz) &:=  (1- p(\zz)) \mLevy ( d\zz) + p(\xbar) \mbar (d\zz), \\
 \Lp[c]&:= c  + \int_{(-\infty,0)}  (1-2\e^{\zz}) p(\zz)\mLevy ( d\zz), \\
\Lp[k]&:= k .
\end{cases}
\end{equation}
Define $\Lp[\kappa]$ as in \eqref{eq:kappa} for $\Lp$, then $\Lp[\kappa] = \kappa$.  
 Further, $$\tb:= \inf \left\{ t\geq 0~:~  \xi(t) \neq \Lp[\xi](t) \right\}$$
has an exponential distribution with parameter $\int_{(-\infty, 0)\setminus\{-\log 2\}}   p(\zz) ~\mLevy ( d\zz)<\infty.$ Moreover, if $\tb< \infty$ then $\tb$ is a jump time of both $\xi$ and $\Lp[\xi]$ with   
$$ \exp (\xi(\tb))+ \exp(\Lp[\xi](\tb)) =\exp(\xi(\tb-)).$$ 
\end{lem}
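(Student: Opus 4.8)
The plan is to realise the switched process $\Lp=\xi-\xi_1+\bar{\xi}_1$ as a sum of two \emph{independent} L\'evy processes, add their Laplace exponents to obtain the characteristics \eqref{eq:st}, deduce $\Lp[\kappa]=\kappa$ from \eqref{eq:kappa}, and finally identify $\tb$ with the first ``effective'' marked jump time. For the first part the crucial structural point is that, although $\xi_1$ is \emph{not} independent of $\xi$, the independent marking makes the marked jumps independent of everything else in the L\'evy--It\^o decomposition of $\xi$; hence $\xi-\xi_1$ and $\xi_1$ are independent, and therefore so are $\xi-\xi_1$ and $\bar{\xi}_1$, the latter being a deterministic functional of $\xi_1$. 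Since $p(z)\mLevy(dz)$ is finite by \eqref{eq:st_H}, $\xi_1$ is a compound Poisson process with $\Exp{\e^{q\xi_1(t)}}=\exp\bigl(t\int_{(-\infty,0)}(\e^{qz}-1)p(z)\mLevy(dz)\bigr)$, so $\xi-\xi_1$ (obtained by deleting only finitely many jumps of $\xi$ per bounded interval) is a SNLP with Gaussian coefficient $\sigma$, killing rate $k$ (the killing time is never marked), L\'evy measure $(1-p(z))\mLevy(dz)$, and, by the factorisation $\Exp{\e^{q\xi(t)}}=\Exp{\e^{q(\xi(t)-\xi_1(t))}}\Exp{\e^{q\xi_1(t)}}$, Laplace exponent $\Phi(q)-\int(\e^{qz}-1)p(z)\mLevy(dz)$; likewise $\bar{\xi}_1$ is compound Poisson with finite L\'evy measure $\bar{\Lambda}_1(dz)=p(\xbar)\mbar(dz)$, and the change of variable $z\mapsto\xbar$ with $\e^{\xbar}=1-\e^z$ turns its Laplace exponent into $\int\bigl((1-\e^z)^q-1\bigr)p(z)\mLevy(dz)$. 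Consequently $\Lp=(\xi-\xi_1)+\bar{\xi}_1$ is a L\'evy process whose jumps are all negative --- at an unmarked time $t$ it jumps by $\Delta\xi(t)<0$, and at a marked time, writing $z:=\Delta\xi(t)$, it jumps by $z-z+\xbar=\xbar<0$, the positive jump of $-\xi_1$ being cancelled by the simultaneous negative jump of $\xi$ --- and $\Phi_{\Lp}=\Phi_{\xi-\xi_1}+\Phi_{\bar{\xi}_1}$. That this sum is the Laplace exponent of the SNLP with characteristics \eqref{eq:st} is then a direct verification using \eqref{eq:LK}, the change of variable $z\mapsto\xbar$ in the integral $\int(\e^{qz}-1+q(1-\e^z))p(\xbar)\mbar(dz)$, and $1-\e^{\xbar}=\e^z$; the drift terms reconcile precisely because of the correction $\int(1-2\e^z)p(z)\mLevy(dz)$. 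Every integral appearing converges absolutely, since $\int p(z)\mLevy(dz)<\infty$, $|1-2\e^z|\le1$, $|(1-\e^z)^q-1|\le1$ for $q\ge0$, and $\int|\e^{qz}-1+q(1-\e^z)|\mLevy(dz)<\infty$ because $\Phi(q)<\infty$.

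Once the characteristics \eqref{eq:st} are known, $\Lp[\kappa]=\kappa$ follows from a short computation: substituting \eqref{eq:st} into \eqref{eq:kappa}, the term $\int(1-\e^z)^q\,\Lp[\mLevy](dz)$ equals $\int(1-\e^z)^q(1-p(z))\mLevy(dz)+\int\e^{qz}p(z)\mLevy(dz)$ (again by $z\mapsto\xbar$), and when this is added to $\Phi_{\Lp}(q)$ all the $p$-dependent terms cancel, leaving $\kappa(q)$. Conceptually, the switching transform merely swaps the two fragments $\e^z$ and $\e^{\xbar}=1-\e^z$ produced at a marked jump, which a growth-fragmentation cannot tell apart; at the level of characteristics this is exactly the assertion that $\xi$ and $\Lp$ satisfy \eqref{eq:triple} --- the substantive identity being that $\Lp[\mLevy]$ together with its push-forward under $z\mapsto\xbar$ telescopes to $\mLevy+\mbar$ --- so $\Lp[\kappa]=\kappa$ may alternatively be read off from Lemma \ref{lem:triple}.

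For the last assertion, $\Lp-\xi=\bar{\xi}_1-\xi_1$ is a compound Poisson process that jumps only at the marked times, at such a time $s$ by the amount $\xbar-z$ with $z=\Delta\xi(s)$, and $\xbar-z=0$ precisely when $\e^z=1/2$, that is, $z=-\log2$; hence $\xi$ and $\Lp$ coincide on $[0,\tb)$, where $\tb$ is the first marked time at which $\Delta\xi\neq-\log2$. Since the marked jumps form a Poisson point process with intensity $p(z)\mLevy(dz)$, restricting to $\{z\neq-\log2\}$ and projecting onto the time axis shows $\tb$ is exponential with parameter $\int_{(-\infty,0)\setminus\{-\log2\}}p(z)\mLevy(dz)$, which is at most $\int_{(-\infty,0)}p(z)\mLevy(dz)<\infty$. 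On $\{\tb<\infty\}$, $\tb$ is by construction a jump time of $\xi$; writing $z_0:=\Delta\xi(\tb)\neq-\log2$, we have $\Lp(\tb-)=\xi(\tb-)$ and $\Delta\Lp(\tb)=z_0-z_0+\overline{z_0}=\overline{z_0}<0$, so $\tb$ is also a jump time of $\Lp$ with $\Lp(\tb)=\xi(\tb-)+\overline{z_0}$, and therefore $\exp(\xi(\tb))+\exp(\Lp(\tb))=\e^{\xi(\tb-)}\bigl(\e^{z_0}+\e^{\overline{z_0}}\bigr)=\e^{\xi(\tb-)}$ by \eqref{eq:xbar}.

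The main obstacle is the first part: correctly using that $\xi-\xi_1$ and $\bar{\xi}_1$ are independent while $\xi$ and $\xi_1$ are not, and keeping the integrability bookkeeping under control so that the Laplace-exponent identity is legitimate. The remaining two parts are then essentially bookkeeping with the involution $z\mapsto\xbar$.
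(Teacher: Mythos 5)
Your proof is correct and follows essentially the same route as the paper's: both decompose $\Lp=(\xi-\xi_1)+\bar\xi_1$, argue that $\xi-\xi_1$ is independent of $\xi_1$ (hence of $\bar\xi_1$, a functional of $\xi_1$), add Laplace exponents to read off the characteristics in \eqref{eq:st}, verify $\Lp[\kappa]=\kappa$ by the change of variable $z\mapsto\xbar$, and identify $\tb$ as the first marked jump with $\Delta\xi\neq-\log 2$. Your exposition is more detailed — in particular, you spell out the Poisson-thinning justification for the independence and the integrability bookkeeping, which the paper leaves implicit — but the argument is the same.
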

\begin{proof}
 The L\'evy processes $(\xi -  \xi_1)$ and $\xi_1$ are independent since they never jump at the same time. For the same reason, the L\'evy processes $(\xi -  \xi_1)$ and $\bar{\xi}_1$ are also independent. Therefore, the Laplace exponent of $\Lp$ is $\Phi -\Phi_1 + \bar{\Phi}_1$, where $\Phi_1$ and $\bar{\Phi}_1$ are respective Laplace exponents of $\xi_1$ and $\bar{\xi}_1$. So we get \eqref{eq:st} and thus check that $\Lp[\kappa] = \kappa$ by straightforward calculation.
 
We next observe from the construction of $\Lp$ that 
$$ \inf \left\{ t\geq 0~:~  \xi(t) \neq \Lp[\xi](t) \right\}= \inf \left\{ t\geq 0~:~ \Delta_1(t) \neq 0 \text{ and } \Delta_1(t) \neq -\log 2 \right\},$$
which implies the second part of the statement. 
\end{proof}

\begin{rem}
It follows from \eqref{eq:Levym} that for every $a\geq 2$ the function $z \mapsto (1- \e^{z})^{a}$ satisfies \eqref{eq:st_H}. 
However, the function $\zz \mapsto (1- \e^{\zz})$, which would correspond to the size-biased pick between $\exp(\Delta \xi (t))$ and $(1- \exp(\Delta \xi (t)))$ (see Section 2.2 in \cite{PitmanWinkel}) cannot satisfy \eqref{eq:st_H} unless  $ \int_{(-\infty,0)} (|\zz| \wedge 1) \mLevy  (d\zz) < \infty$.
\end{rem}

\begin{lem}\label{lem:st1} 
If $\kappa_{\gamma} = \kappa$, then for every measurable function $p \colon (-\infty,0) \to [0,1]$ such that
\begin{equation}\label{eq:pbar}
\int_{(-\infty,0)} p( \zz) \mLevy(d \zz)< \infty \qquad\text{ and } \qquad p(\zz) + p(\xbar)=1  \text{ for every }z\in (-\infty,0),
\end{equation}
there is $\int_{(-\infty,0)} p( \zz) \mLevy_{\gamma}(d \zz)< \infty$ and $\Lp[\gamma]\eqdis\Lp[\xi]$. 
\end{lem}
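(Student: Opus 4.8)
The plan is to use Lemma~\ref{lem:st} to write down the characteristics of both switching transforms $\Lp[\xi]$ and $\Lp[\gamma]$, to check that all four of them coincide, and then to conclude $\Lp[\gamma]\eqdis\Lp[\xi]$ because an SNLP is determined by its characteristics through the L\'evy--Khintchine formula \eqref{eq:LK}. From the hypothesis $\kappa_\gamma=\kappa$ I extract two ingredients used repeatedly: Lemma~\ref{lem:triple} yields the four identities in \eqref{eq:triple}, and Lemma~\ref{lem:st} gives $\Lp[\kappa]=\kappa$ and, once $\Lp[\gamma]$ is seen to be well defined, $\Lp[\kappa_{\gamma}]=\kappa_\gamma$, hence $\Lp[\kappa]=\Lp[\kappa_{\gamma}]$.

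Before applying Lemma~\ref{lem:st} to $\gamma$ I would verify that $p$ satisfies the integrability requirement \eqref{eq:st_H} with $\mLevy$ replaced by $\mLevy_\gamma$, so that $\Lp[\gamma]$ is defined. Split $(-\infty,0)$ at $-\log 2$: on $(-\infty,-\log 2]$ the measure $\mLevy_\gamma$ is finite (as is any L\'evy measure, cf.\ \eqref{eq:mbar}) and $p\le 1$, so that part is finite; on $[-\log 2,0)$ the first identity of \eqref{eq:triple} gives $\mLevy_\gamma\le\mLevy_\gamma+\mbar_\gamma=\mLevy+\mbar$, whence $\int_{[-\log 2,0)}p\,(\mLevy+\mbar)(dz)\le\int_{(-\infty,0)}p\,d\mLevy+\mbar([-\log 2,0))<\infty$ by \eqref{eq:pbar} and \eqref{eq:mbar}. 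Thus $\int p\,d\mLevy_\gamma<\infty$.

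Next I compare the characteristics given by \eqref{eq:st}. One has $\Lp[\sigma]=\sigma=\sigma_\gamma=\Lp[\sigma_{\gamma}]$ and $\Lp[k]=k=k_\gamma=\Lp[k_{\gamma}]$ at once from \eqref{eq:triple}. For the L\'evy measure, the second part of \eqref{eq:pbar}, i.e.\ $p(\xbar)=1-p(z)$, collapses the formula in \eqref{eq:st} to $\Lp[\mLevy](dz)=(1-p(z))(\mLevy+\mbar)(dz)$, and likewise $\Lp[\mLevy_{\gamma}](dz)=(1-p(z))(\mLevy_\gamma+\mbar_\gamma)(dz)$; these agree because $\mLevy+\mbar=\mLevy_\gamma+\mbar_\gamma$. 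For the drift $\Lp[c]$, rather than confront the two expressions in \eqref{eq:st} head-on --- which is delicate since $\mLevy$ and $\mLevy_\gamma$ may carry infinite mass near $0$ --- I would recover $\Lp[c]$ from $\Lp[\kappa]$: feeding \eqref{eq:LK} into the definition \eqref{eq:kappa} for the transformed data and evaluating at $q=2$ (where every integral converges, since $\kappa(2)<\infty$, and using $\e^{2z}-1+2(1-\e^z)+(1-\e^z)^2=2(1-\e^z)^2$) gives
\[
2\,\Lp[c]=\Lp[\kappa](2)+\Lp[k]-2(\Lp[\sigma])^2-2\int_{(-\infty,0)}(1-\e^z)^2\,\Lp[\mLevy](dz),
\]
together with the same identity for $\gamma$-data. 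Since $\Lp[\kappa]$, $\Lp[k]$, $\Lp[\sigma]$ and $\Lp[\mLevy]$ have already been matched, this forces $\Lp[c]=\Lp[c_{\gamma}]$.

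Having matched all four characteristics, $\Lp[\xi]$ and $\Lp[\gamma]$ have the same Laplace exponent and hence the same law, so $\Lp[\gamma]\eqdis\Lp[\xi]$; this, together with the integrability bound of the second paragraph, finishes the proof. The only point requiring genuine care is that $\mLevy$ and $\mLevy_\gamma$ need not be finite --- only the symmetric combination $\mLevy+\mbar$ is tame --- so one should resist comparing the drift terms of \eqref{eq:st} measure by measure; routing $\Lp[c]$ through the already-matched invariant $\Lp[\kappa]=\kappa$ via \eqref{eq:LK}--\eqref{eq:kappa} bypasses the issue, and the analogous splitting at $-\log 2$ is exactly what makes the integrability check in the second paragraph go through.
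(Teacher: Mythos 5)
Your proposal is correct, and it follows the same overall strategy as the paper --- show $\Lp[\gamma]$ is well defined, then match all four characteristics of $\Lp[\xi]$ and $\Lp[\gamma]$ --- but it handles the drift comparison by a genuinely different route. The paper's proof simply asserts that the drift equality follows ``by combining Lemma~\ref{lem:triple} and Lemma~\ref{lem:st}''; unpacked, this amounts to the identity
\[
\int_{(-\infty,0)}(1-2\e^z)p(z)\,(\mLevy_\gamma-\mLevy)(dz)=\int_{(-\infty,-\log 2)}(1-2\e^z)\,(\mLevy_\gamma-\mLevy)(dz),
\]
which holds because $1-2\e^z$ is antisymmetric under the involution $z\mapsto\xbar$, the signed measure $\mLevy_\gamma-\mLevy$ is anti-invariant under the same map, and $p(z)+p(\xbar)=1$. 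Your route bypasses this somewhat fiddly symmetry argument: once $\Lp[\kappa]$, $\Lp[\sigma]$, $\Lp[k]$ and $\Lp[\mLevy]$ are matched, the drift is pinned down uniquely by evaluating the L\'evy--Khintchine/$\kappa$-relation at $q=2$ (where the algebraic identity $\e^{2z}-1+2(1-\e^z)+(1-\e^z)^2=2(1-\e^z)^2$ makes everything collapse). This buys a cleaner, more mechanical argument, at the small cost of needing $\kappa(2)<\infty$, which is automatic from \eqref{eq:Levym}. Your finiteness check for $\int p\,d\mLevy_\gamma$ by splitting at $-\log 2$ is also a valid alternative to the paper's phrasing that $\mLevy_\gamma-\mLevy$ is a finite signed measure --- the two reduce to the same observation. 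Both proofs are sound; yours makes explicit the one step that the paper leaves implicit.
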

The function $\zz \mapsto \ind{\zz<-\log 2} + \frac{1}{2}\ind{\zz=-\log 2}$ gives an example that satisfies \eqref{eq:pbar}. 
\begin{proof}[Proof of Lemma \ref{lem:st1}] 
As $\kappa_{\gamma} = \kappa$, it follows from Lemma \ref{lem:triple} and \eqref{eq:Levym} that $\mLevy_{\gamma} -\mLevy $ is a finite signed measure and hence we have
$$\int_{(-\infty,0)} p( \zz) \mLevy_{\gamma}(d \zz)\leq \int_{(-\infty,0)} p( \zz) \mLevy (d \zz) +  \int_{(-\infty,0)} |\mLevy_{\gamma} -\mLevy |(d\zz) < \infty.$$
So the switching transforms $\Lp[\gamma]$ and $\Lp[\xi]$ are well-defined. As \eqref{eq:pbar} holds, by combining Lemma \ref{lem:triple} and Lemma \ref{lem:st}, we get that the characteristics of $\Lp[\gamma]$ are the same as those of $\Lp[\xi]$.   
\end{proof}
We next see that the SNLPs that have the same $\kappa$ are related to each other via the switching transformations. 
\begin{prop}\label{prop:st}
If $\kappa_{\gamma} = \kappa$, then $\gamma\eqdis \Lp[\xi]$, where $p$ is the measurable function defined by Radon-Nikodym derivative
$$p  (\zz):=  \mbar_{\gamma} (d\zz) / (\mLevy_{\gamma} (d\zz)+ \mbar_{\gamma} (d\zz)).$$
\end{prop}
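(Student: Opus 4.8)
The plan is to verify that the switching transform $\Lp[\xi]$, taken with respect to the particular $p$ defined by the Radon–Nikodym derivative, has the same characteristics as $\gamma$, and then invoke the fact that a SNLP is determined by its characteristics. First I would check that $p$ is well-defined and satisfies the integrability hypothesis \eqref{eq:st_H}. Since $\kappa_\gamma = \kappa$, Lemma \ref{lem:triple} gives $\mLevy + \mbar = \mLevy_\gamma + \mbar_\gamma$, so $\mbar_\gamma$ is absolutely continuous with respect to $\mLevy_\gamma + \mbar_\gamma$, making $p(z)$ a legitimate density taking values in $[0,1]$; moreover $\int_{(-\infty,0)} p(z)\,\mLevy(dz) = \int_{(-\infty,0)} p(z)\,\mLevy_\gamma(dz) + \int p\, d(\mLevy - \mLevy_\gamma)$, and the first term is dominated by $\int \mbar_\gamma(dz) = \int \mLevy(|z| \text{-part})$, which is finite by \eqref{eq:mbar}, while the second is finite because $\mLevy - \mLevy_\gamma$ is a finite signed measure (again by Lemma \ref{lem:triple} and \eqref{eq:Levym}). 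So $\Lp[\xi]$ is well-defined.

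Next I would compute each of the four characteristics of $\Lp[\xi]$ using the formulas \eqref{eq:st} and compare with those of $\gamma$. The Gaussian coefficient and killing rate are immediate since $\Lp[\sigma] = \sigma = \sigma_\gamma$ and $\Lp[k] = k = k_\gamma$ by Lemma \ref{lem:triple}. For the Lévy measure, observe that by definition of $p$ we have $p(z)\bigl(\mLevy_\gamma(dz) + \mbar_\gamma(dz)\bigr) = \mbar_\gamma(dz)$, hence $(1-p(z))\bigl(\mLevy_\gamma(dz) + \mbar_\gamma(dz)\bigr) = \mLevy_\gamma(dz)$. The key algebraic point is that the involution $z \mapsto \xbar$ swaps $\mLevy_\gamma$ and $\mbar_\gamma$ under push-forward, so $p(\xbar)\,\mbar(dz)$ is the push-forward of $p(z)\,\mLevy(dz)$; substituting $\mLevy + \mbar = \mLevy_\gamma + \mbar_\gamma$ and using that push-forward by an involution is its own inverse, the two pieces $(1-p(z))\mLevy(dz)$ and $p(\xbar)\mbar(dz)$ in \eqref{eq:st} recombine to exactly $\mLevy_\gamma(dz)$. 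Finally, for the drift $c$ one substitutes $\mLevy + \mbar = \mLevy_\gamma + \mbar_\gamma$ into the formula for $\Lp[c]$ and matches it against the drift identity in \eqref{eq:triple}; this is essentially the same bookkeeping that already appears in the proof of Lemma \ref{lem:st}, where it was shown that $\Lp[\kappa] = \kappa$.

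The main obstacle — though it is more a matter of care than of depth — is the change-of-variables bookkeeping for the Lévy measure and drift: one must track which terms live on $\mLevy$ versus $\mbar$, use the involution property $\bar{\xbar} = z$ correctly so that a push-forward applied twice is the identity, and handle the atom at $-\log 2$ (where $z = \xbar$) consistently, noting that it contributes nothing to the "switched" part of the drift since $1 - 2\e^z = 0$ there. Once the characteristics agree, the conclusion $\gamma \eqdis \Lp[\xi]$ follows because the Laplace exponent, equivalently the characteristics $(\sigma, c, \mLevy, k)$, determines the law of a spectrally negative Lévy process.
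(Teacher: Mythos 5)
Your overall strategy --- compute the four characteristics of $\Lp[\xi]$ from \eqref{eq:st} and match them against those of $\gamma$ --- is sound and will work, but it is a more direct route than the paper's. The paper proceeds in two modular steps: first it verifies that $\gamma^{[p]}\eqdis\gamma$ (this uses only $\gamma$ and the identity $p(\zz)+p(\xbar)=1$, not $\xi$ at all), and then invokes Lemma~\ref{lem:st1} to conclude $\Lp[\xi]\eqdis\Lp[\gamma]=\gamma^{[p]}$. Your direct computation in effect merges both steps; it is fine, but to make the ``recombination'' precise you do need to write out the identity $1-p(\zz)=p(\xbar)$ (a.e.\ $\mLevy_\gamma+\mbar_\gamma$), which follows from pushing $p(\zz)(\mLevy_\gamma+\mbar_\gamma)(d\zz)=\mbar_\gamma(d\zz)$ forward by the involution. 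With that in hand $(1-p(\zz))\mLevy(d\zz)+p(\xbar)\mbar(d\zz)=p(\xbar)(\mLevy+\mbar)(d\zz)=p(\xbar)(\mLevy_\gamma+\mbar_\gamma)(d\zz)=\mLevy_\gamma(d\zz)$, and the same identity drives the drift computation via $p(\zz)\mLevy-p(\xbar)\mbar=\mLevy-\mLevy_\gamma$ on $(-\infty,-\log2)$.

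There is, however, a genuine error in your finiteness argument: you bound $\int_{(-\infty,0)} p(\zz)\,\mLevy_\gamma(d\zz)$ by $\int_{(-\infty,0)}\mbar_\gamma(d\zz)$ and then assert the latter is finite by \eqref{eq:mbar}. But \eqref{eq:mbar} only gives $\mbar_\gamma([-\log2,0))<\infty$; the mass $\mbar_\gamma((-\infty,-\log2))=\mLevy_\gamma((-\log2,0))$ can perfectly well be infinite, so $\int_{(-\infty,0)}\mbar_\gamma(d\zz)$ need not be finite. The bound must be split as the paper does: on $(-\infty,-\log2)$ use $p\le 1$ together with $\mLevy_\gamma((-\infty,-\log2))<\infty$, and on $[-\log2,0)$ use $p(\zz)\mLevy_\gamma(d\zz)\le\mbar_\gamma(d\zz)$ together with $\mbar_\gamma([-\log2,0))<\infty$. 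Once this is repaired (and the $p(\zz)+p(\xbar)=1$ identity is stated explicitly), the rest of your argument goes through.
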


\begin{proof}
Observe that
\begin{linenomath}\begin{align}
 \int_{(-\infty,0)} p ( \zz) \mLevy_{\gamma} (d \zz)& \leq   \int_{(-\infty,-\log 2)}  \mLevy_{\gamma} (d\zz) + \int_{(-\log 2,0)} \mbar_{\gamma}  (d\zz)< \infty,
\end{align}\end{linenomath}
then the switching transform $\Lp[\gamma]$ is well-defined, and we deduce from Lemma \ref{lem:st} that $\gamma^{[p ]}\eqdis \gamma$. Note that $p (\zz) + p (\xbar)=1$ for every $z\in (-\infty,0)$, then it follows from Lemma \ref{lem:st1} that $\Lp$ is also well-defined and $ \Lp\eqdis \gamma^{[p ]}$. So we conclude that $\gamma\eqdis \Lp[\xi]$.  
\end{proof}

We finally present a construction of a bifurcator of homogeneous cell processes, which has the following precise definition.  
\begin{defn}\label{defn:bifH}
A pair of homogeneous cell processes $(X ,Y )$ is a {\bf bifurcator} if it satisfies the following properties: 
  \begin{enumerate}[label=(\roman*)]
  \item Let $\tb:= \inf \{t\geq 0: X (t) \neq \tildeX  (t) \}$. There is almost surely either $\tb=\infty$ or the identity
    \begin{equation}
      X (\tb)+ \tildeX (\tb) = X (\tb-)= \tildeX (\tb-).
    \end{equation}
  \item  (Asymmetric Markov branching property)
Conditionally given $\tb>t$, the pair $(X (r)/ X (t), \tildeX (r)/\tildeX (t))_{r\geq t} $ is a copy of $(X ,Y )$; conditionally given $\tb\leq t$, the two processes $(X (r)/ X (t))_{r\geq t}$ and $(\tildeX (r)/\tildeX (t))_{r\geq t} $ are independent copies of $X $ and $Y $ respectively. 
  \end{enumerate}
\end{defn}
This definition generalizes bifurcators of fragmenters in \cite{PitmanWinkel}. 
We shall later extend this notion to general cell processes, see Definition \ref{defn:bif}. 

\begin{lem}\label{lem:bifH}
  If $\kappa=\kappa_{\gamma}$, then there exists a bifurcator of homogeneous processes $(X,Y)$, such that the marginal laws of $X$ and $Y$ are the laws of $\exp(\xi)$ and $\exp(\gamma)$ respectively. 
\end{lem}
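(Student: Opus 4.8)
The plan is to build the bifurcator on a single probability space by combining the switching transformation of Section~\ref{sec:sw} with an independent ``restart'' at the bifurcation time. To begin with, assume $\kappa=\kappa_{\gamma}$ and take as switching function the Radon--Nikodym derivative $p(z):=\mbar_{\gamma}(dz)/(\mLevy_{\gamma}(dz)+\mbar_{\gamma}(dz))$, which satisfies $p(z)+p(\xbar)=1$ for every $z\in(-\infty,0)$. As in the proof of Proposition~\ref{prop:st}, Lemma~\ref{lem:triple} and \eqref{eq:Levym} give $\int_{(-\infty,0)}p(z)\,\mLevy(dz)<\infty$, so on a probability space carrying $\xi$ together with the independent marks of its jumps the switching transform $\Lp[\xi]=\xi-\xi_{1}+\bar\xi_{1}$ of $\xi$ according to $p$ is well defined, $\Lp[\xi]\eqdis\gamma$ by Proposition~\ref{prop:st}, and by Lemma~\ref{lem:st} the time $\tb:=\inf\{t\geq0:\xi(t)\neq\Lp[\xi](t)\}$ is exponentially distributed with a finite parameter, is on $\{\tb<\infty\}$ a common jump time of $\xi$ and $\Lp[\xi]$ whose value $\Delta\xi(\tb)$ differs from $-\log 2$, and satisfies $\exp(\xi(\tb))+\exp(\Lp[\xi](\tb))=\exp(\xi(\tb-))$.

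Next I would enlarge the space by an independent copy $C$ of the process $\exp(\gamma)$, independent of $\xi$ and the marks, and define $X:=\exp(\xi)$, and $Y(t):=\exp(\Lp[\xi](t))$ for $0\leq t\leq\tb$, $Y(t):=Y(\tb)\,C(t-\tb)$ for $t>\tb$. Clearly $X\eqdis\exp(\xi)$. For the law of $Y$, observe that $\tb$ is a stopping time of the filtration $\mathcal G=(\mathcal G_{t})_{t\geq0}$ generated by $\xi$ and the marks, and that $\Lp[\xi]$ is a L\'evy process with respect to $\mathcal G$, because its post-$t$ increment is a measurable functional of the post-$t$ increments of $\xi$ and of the marks of the post-$t$ jumps, all independent of $\mathcal G_{t}$. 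Hence by the strong Markov property $(\Lp[\xi](\tb+s)-\Lp[\xi](\tb))_{s\geq0}$ is independent of $\mathcal G_{\tb}$ with the law of $\Lp[\xi]\eqdis\gamma$ --- which is exactly the conditional law with which the independent process $\log C$ is attached at $\tb$ in the definition of $Y$. Since the pre-$\tb$ part of $\log Y$ is $\mathcal G_{\tb}$-measurable and equals that of $\Lp[\xi]$, this yields $\log Y\eqdis\Lp[\xi]\eqdis\gamma$, i.e. $Y\eqdis\exp(\gamma)$.

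It then remains to check the two axioms of Definition~\ref{defn:bifH}. Axiom~(i): $X$ and $Y$ coincide on $[0,\tb)$ (as $\xi=\Lp[\xi]$ there), so $X(\tb-)=Y(\tb-)$; and on $\{\tb<\infty\}$ one has $X(\tb)\neq Y(\tb)$, since $X(\tb)=Y(\tb)$ would force $\Delta\xi(\tb)=-\log 2$, whereas by construction the jump of $\xi$ at $\tb$ is different from $-\log 2$. Therefore $\tb=\inf\{t\geq0:X(t)\neq Y(t)\}$ and $X(\tb)+Y(\tb)=X(\tb-)=Y(\tb-)$ by the identity above. Axiom~(ii) I would obtain by regeneration: conditioning on $\{\tb>t\}\in\mathcal G_{t}$ and using memorylessness of $\tb$, the Markov property of $\xi$ at $t$, the freshness of the marks of the post-$t$ jumps, and the independence of $C$, the triple $\bigl(\xi(t+\cdot)-\xi(t),\ \text{marks after }t,\ C\bigr)$ is again an independent copy of $(\xi,\text{marks},C)$, so feeding it through the recipe defining $(X,Y)$ produces $\bigl(X(r)/X(t),\,Y(r)/Y(t)\bigr)_{r\geq t}$, which is thus a copy of $(X,Y)$; conditioning instead on $\{\tb\leq t\}\in\mathcal G_{t}$, the process $(X(r)/X(t))_{r\geq t}=\exp\!\bigl(\xi(t+\cdot)-\xi(t)\bigr)$ is a copy of $X$ independent of $\mathcal G_{t}$ by the Markov property of $\xi$, while $(Y(r)/Y(t))_{r\geq t}=\bigl(C(t-\tb+s)/C(t-\tb)\bigr)_{s\geq0}$ is a copy of $Y$ by temporal homogeneity and independence of $C$, and the two are independent since the former depends only on the post-$t$ increments of $\xi$ and the latter only on $C$. (Throughout, $\tb$ is never a killing time and $\xi,\Lp[\xi]$ are killed simultaneously, so the cemetery point causes no real difficulty.)

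I expect the main obstacle to be the bookkeeping behind axiom~(ii): one must verify carefully that conditioning on $\{\tb>t\}$ or on $\{\tb\leq t\}$ leaves the relevant future ingredients undisturbed --- which is precisely why it matters that $\tb$ is an exponential $\mathcal G$-stopping time --- and one must recognise that the independent restart $C$ is indispensable, since taking $Y=\exp(\Lp[\xi])$ throughout would violate axiom~(ii) after $\tb$: there $\xi$ and $\Lp[\xi]$ still share every unmarked jump and so the two branches never become independent.
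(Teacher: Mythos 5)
Your construction is essentially identical to the paper's proof: the paper also builds $Y$ by following $\exp(\Lp[\xi])$ up to the switching time $\tb$ and then restarting with an independent copy $\gamma'$ of $\gamma$, namely $Y(t)=\ind{t<\tb}\exp(\xi(t))+\ind{t\geq\tb}\exp(\xi^{[p]}(\tb)+\gamma'(t-\tb))$, and then asserts that the axioms of Definition~\ref{defn:bifH} can be checked directly. You simply carry out in detail the verification that the paper leaves to the reader (that $Y\eqdis\exp(\gamma)$ via the strong Markov property of $\Lp[\xi]$ at $\tb$, and the two axioms via regeneration at a deterministic time), and all of it is correct.
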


\begin{proof} 
Since $\kappa=\kappa_{\gamma}$, we can build as in Proposition \ref{prop:st} the switching transform $\xi^{[p]}$ derived from $\xi$ such that $\xi^{[p]}\eqdis \gamma$. We stress that $\xi$ and $\xi^{[p]}$ are still coupled after the switching time $\tb:= \inf\{t\geq 0:~ \xi(t)\neq \xi^{[p]}(t)\}$. However, let us define a process $Y$ by
$$Y(t) :=\ind{t< \tb} \exp(\xi(t)) + \ind{t\geq \tb} \exp(\xi^{[p]}(\tb) + \gamma'(t-\tb) )  , \quad t\geq 0,$$
where $\gamma'$ is a copy of $\gamma$, independent of $\xi^{[p]}$ and $\xi$.
Then we easily check that $Y \eqdis \exp(\gamma)$ and the pair of homogeneous processes $(X:=\exp(\xi), Y)$ satisfies Definition \ref{defn:bifH}. 
\end{proof}



\subsection{Binary branching L\'evy processes}\label{sec:BLP}
Let $\xi_b$ be a SNLP with characteristics $(\sigma_b, c_b, \mLevy_b,k_b)$ and  $\mmu_b$ be a L\'evy measure with support on $[-\log 2, 0]$ that satisfies 
\begin{equation}\label{eq:mu}
\int_{\Rdtwo} (1 \wedge \zz^2) \mmu_b(d \rr)<\infty.
	\end{equation}  
Informally speaking, a {\it binary branching L\'evy process (BBLP)} introduced in \cite{Bertoin:CF} models the evolution of a particle system, in which each particle moves in $\R$ according to the SNLP $\xi_b$, independently of the other particles, and at rate $\mmu_b(d\zz)$ each particle gives birth to two children scattered on $\R$, whose initial positions relative to the position of the parent at death are given by $\zz$ and $\xbar = \log(1-\e^{\zz})$. 
We further add a properly chosen positive drift for the entire system, which is an analogue of the compensation term in the L\'evy-Khintchine formula \eqref{eq:LK}, so that the particles in this system do not all shift to $-\infty$ instantaneously. Proposition 3 in \cite{Bertoin:growth} establishes a close connection between BBLPs and homogeneous growth-fragmentations. We will extend this connection in the next subsection. Before that, we recall some basic facts of BBLPs in this subsection.

Let us represent the formal construction of BBLPs in \cite{Bertoin:CF}, starting with the case when the branching occurs with a finite intensity, i.e. $\mmu_b(\Rdtwo)< \infty$.
Write $\Utwo:= \bigcup_{n=0}^{\infty}\{\vl,\vr \}^n$ for the binary Ulam-Harris tree with $\{\vl, \vr \}^0:= \emptyset$ by convention, so for every $i\in\N$, an element in $\{\vl,\vr \}^i$ is a word $v= (n_1,n_2, \ldots, n_i)$ composed of $i$ letters of the alphabet $\{\vl,\vr \}$. We write $|v|:=i$ for the generation of $v$ and $(v\vl,v\vr)$ for its children, where $v\vl$ would be referred to as the {\it left} child and $v\vr$ as the {\it right} child. For every $j\leq |v|$, we denote by $[v]_j:=(n_1,n_2, \ldots,n_j)$ the ancestor of $v$ at the $j$-th generation. 

\begin{defn}\label{defn:BLP}
Let $\xi_b$ be a SNLP with characteristics $(\sigma_b, c_b, \mLevy_b,k_b)$ and $\mmu_b$ be a finite measure on $\Rdtwo$. We consider three independent processes $(\Deltab_v)_{v \in \Utwo}$, $(L_v)_{v \in \Utwo}$ and $(\Deltaa_v)_{v \in \Utwo}$ such that: 
\begin{itemize}
\item $(\Deltab_v)_{v \in \Utwo}$ is a family of i.i.d. exponential variables with parameter $\mmu_b(\Rdtwo)$. 
\item $(L_v)_{v \in \Utwo}$ is a family of independent SNLP distributed as 
$$\xi_b(t) + \left(\int_{\Rdtwo} (1-\e^{\zz})\mmu_b(d\rr)\right)t, \quad t\geq 0.$$  
\item $(\Deltaa_{v\vl}, \Deltaa_{v\vr})_{v \in \Utwo}$ is a family of i.i.d. random variables, such that 
$\Deltaa_{v\vl}$ is distributed according to the conditional probability $\mmu_b (\cdot ~|~ \Rdtwo) $ and $\Deltaa_{v\vr} = \overline{\Deltaa_{v\vl}} =\log( 1 - \exp(\Deltaa_{v\vl}))\leq \Deltaa_{v\vl}$.  
\end{itemize}
Define for every $v\in \Utwo$ the birth time by $\beta_v := \sum_{j=0}^{|v|-1}\Deltab_{[v]_j}$, and iteratively the positions of its children at birth by $(a_{vi} = a_v+ L_v(\Deltab_v) + \Deltaa_{vi}, i\in \{\vl,\vr\})$,  with $a_{\emptyset}=0$. We agree that $L_v(s)=-\infty$ if $L_v$ is killed before $s$. Then the positions of the particles alive at time $t\geq 0$ form a multiset of elements in $\R$ (which is a generalization of the concept of a set that, unlike a set, allows multiple instances of the multiset's elements)
$$\Zs (t):= \multiset{a_v + L_v(t-b_v)~:~ v\in \Utwo, \beta_v \leq t < \beta_v+ \Deltab_v}.$$
The process $(\Zs (t), t\geq 0)$ is a {\bf binary branching L\'evy process (BBLP) with characteristics $(\sigma_b, c_b, \mLevy_b,k_b,\mmu_b)$}. 
\end{defn}
\begin{rem}
  A multiset $\ms$ could be equivalently viewed as the point measure $\sum_{i\in \ms}\delta_{i}$, where $\delta$ stands for the Dirac mass. So we can identify $\Zs$ with a point process.   
\end{rem}

We next extend the construction to infinite branching intensify. Suppose that $\mmu_b(\Rdtwo)= \infty$. 
For every $\levb \leq -\log 2$, let us set
\begin{equation}\label{eq:cutb}
\cutb{\mmu}_b:=\ind{[-\log 2, \lev)}\mmu_b, \quad \cutb{\mLevy}_b:=\mLevy_b + \ind{[\lev,0)}\mmu_b.
\end{equation}
We know from Lemma 3 in \cite{Bertoin:CF} that we can construct a family of processes $(\Zs^{\levb}, -\infty <\levb\leq -\log 2)$ in the same probability space, with each $\Zs^{\levb}$ a BBLP with characteristics $(\sigma_b, c_b,\cutb{\mLevy}_b,k_b, \cutb{\mmu}_b)$ in the sense of Definition \ref{defn:BLP} (we stress that \eqref{eq:mu} assures that $\cutb{\mmu}_b$ is a finite measure), such that for every $\levb\leq \levb'\leq -\log 2$ there is $\cutb[\levb']{(\Zs^{\levb})} = \Zs^{\levb'}$, where $\cutb[\levb']{(\Zs^{\levb})}$ is the system derived from $\Zs^{\levb}$ by keeping at each branching event the child particle that is closer to the mother, and suppressing the other child particle (together with its offspring) whenever it is born at distance from its mother $\geq |\levb'|$. 


\begin{defn}\label{defn:BLP1}
In the notation above, suppose that $\mmu_b$ is a L\'evy  measure on $\Rdtwo$ that verifies \eqref{eq:mu}. Then the limit process (by monotonicity in the sense of multiset inclusion)
$$\Zs(t) := \lim_{\levb \to -\infty}\uparrow \Zs^{\levb}(t) , \quad t\geq 0$$ 
is a {\bf BBLP with characteristics $(\sigma_b, c_b ,\mLevy_b, k_b, \mmu_b)$}. 
\end{defn}

\begin{rem} Our notation is slightly different from that of \cite{Bertoin:CF}. In the sense of Definition 2 in \cite{Bertoin:CF}, a BBLP with characteristics $(\sigma_b, c_b,\mLevy_b, k_b, \mmu_b)$ is characterized by $(\sigma_b, c_b-k_b, \mu_b)$, where $\mu_b$ is a measure on the space  
	 $$\left\{ (r_1, r_2, -\infty,\ldots ,-\infty) ~:~ \e^{r_1}+ \e^{r_2} \leq  1, 0> r_1\geq r_2\geq -\infty \right\}, $$
	and $ $ is given by the sum of the following three measures: the image of $\mLevy_b$ by the map $\zz \mapsto (\zz, -\infty,\ldots ,-\infty)$, the image of $\mmu_b$ by the map $\zz \mapsto (\zz, \xbar,-\infty,\ldots ,-\infty)$ and $k_b \delta_{(-\infty,\ldots ,-\infty)}$.
\end{rem}

Let $\Phi_b$ be the Laplace exponent of the SNLP $\xi_b$ with characteristics $(\sigma_b, c_b,\mLevy_b, k_b)$. Introduce $\kappa_b: [0,\infty) \to (-\infty, \infty]$ by
\begin{equation}\label{eq:kappaCFP}
\kappa_b (q) :=\Phi_b(q) +\int_{\Rdtwo}  \left(\e^{q \zz} + (1-\e^{\zz})^{q} -1 +  q(1-\e^{\zz}) \right) \mmu_b(d\rr), \quad q \geq 0, 
\end{equation}
then $\kappa_b$ serves as cumulant for the BBLP $\Zs$. 
Specifically, we know from Theorem 1 in \cite{Bertoin:CF} that for every $q\geq 2$, there is $\kappa_b (q)< \infty$ and
\begin{equation}\label{eq:mart}
  \Exp{\sum_{z\in \Zs(t)} \e^{q \zz}} = \e^{\kappa_b (q) t}\quad \text { for all } t\geq 0.
\end{equation}
We now check that if $\mLevy_b = 0$, then the cumulant determines the distribution of the BBLP in the following sense.

 \begin{lem}\label{lem:CFP}
Let  $\Zs$ and $\Zs'$ be two BBLPs with respective characteristics $(\sigma_b, c_b ,\mLevy_b , k_b, \mmu_b)$ and $(\sigma'_b, c'_b ,\mLevy'_b,$ $k'_b, \mmu'_b)$. If $\mLevy_b = \mLevy'_b=0$ and their cumulants 
$\kappa_b = \kappa'_b$, then $\Zs$ and $\Zs'$ have the same law. 
 \end{lem}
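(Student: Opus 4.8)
The plan is to reduce the statement to an identity between the defining characteristics of the two BBLPs, and then to invoke the explicit construction in Definition~\ref{defn:BLP} and Definition~\ref{defn:BLP1}. Concretely, I would first show that the hypotheses $\mLevy_b=\mLevy'_b=0$ and $\kappa_b=\kappa'_b$ force
\begin{equation}
\sigma_b=\sigma'_b,\qquad k_b=k'_b,\qquad c_b-k_b=c'_b-k'_b,\qquad \mmu_b=\mmu'_b.
\end{equation}
This is a \emph{direct analogue of Lemma~\ref{lem:triple}}: when $\mLevy_b=0$ the cumulant \eqref{eq:kappaCFP} reads
$\kappa_b(q)=-k_b+\tfrac12\sigma_b^2 q^2 + c_b q + \int_{\Rdtwo}(\e^{q\zz}+(1-\e^{\zz})^q-1+q(1-\e^{\zz})-q\zz)\,\mmu_b(d\rr)+q\int_{\Rdtwo}\zz\,\mmu_b(d\rr)$, but it is cleaner simply to differentiate. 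Taking the third derivative in $q$ (legitimate for $q>2$, where everything is finite by \eqref{eq:mu} exactly as in the proof of Lemma~\ref{lem:triple}) kills $k_b$, $\sigma_b$, and the linear term, leaving $\int_{\Rdtwo}(\e^{q\zz}\zz^3+(1-\e^{\zz})^q(\bar\zz)^3)\,\mmu_b(d\rr)$; since $\mmu_b$ is supported on $\Rdtwo=[-\log 2,0)$ and there $\bar\zz=\log(1-\e^\zz)\le -\log 2\le \zz$, the two pieces are supported on \emph{disjoint} (up to the single point $-\log 2$) ranges, so matching the Laplace transforms in $q>2$ of $\zz^3\mmu_b(d\zz)+(\bar\zz)^3\bar\mmu_b(d\zz)$ yields $\mmu_b+\bar\mmu_b=\mmu'_b+\bar\mmu'_b$; restricting to $[-\log 2,0)$ and using that $\bar\mmu_b$ lives on $(-\infty,-\log2]$ gives $\mmu_b=\mmu'_b$ on $[-\log2,0)$ hence everywhere. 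Iterating down the order of the derivative recovers $\sigma_b=\sigma'_b$, then $c_b=c'_b$ after subtracting the now-equal integral terms, then $k_b=k'_b$ from $\kappa_b(0)=\kappa'_b(0)$. (In fact one only needs $c_b-k_b=c'_b-k'_b$ together with $\mmu_b=\mmu'_b$ and $\sigma_b=\sigma'_b$, which is what the construction actually depends on, cf.\ the Remark after Definition~\ref{defn:BLP1}; but here we get the slightly stronger conclusion for free.)

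Given the equality of characteristics, the second step is to observe that the law of the BBLP is a \emph{function of} $(\sigma_b, c_b-k_b, \mmu_b)$ only — equivalently of $(\sigma_b,c_b,k_b,\mmu_b)$ since $\mLevy_b=0$ is fixed. When $\mmu_b(\Rdtwo)<\infty$ this is immediate from Definition~\ref{defn:BLP}: the three driving families $(\Deltab_v)$, $(L_v)$, $(\Deltaa_v)$ have joint law determined by $\mmu_b$ (for the exponential clocks and the branching displacements) and by $(\sigma_b, c_b + \int_{\Rdtwo}(1-\e^\zz)\mmu_b(d\rr), k_b)$ together with $\mLevy_b=0$ (for the $L_v$), and $\Zs$ is a deterministic functional of these. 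For $\mmu_b(\Rdtwo)=\infty$, one passes through the truncations: with $\mLevy_b=0$ the truncated characteristics from \eqref{eq:cutb} become $\cutb{\mmu}_b=\ind{[-\log2,\lev)}\mmu_b$ and $\cutb{\mLevy}_b=\ind{[\lev,0)}\mmu_b$, which are again determined by $\mmu_b$ alone; hence each $\Zs^{\levb}$ and $\Zs'^{\levb}$ have the same law, and since the coupling of Lemma~3 of \cite{Bertoin:CF} (the monotone family $\Zs^{\levb}$) can be realized consistently, the monotone limit $\Zs(t)=\lim_{\levb\to-\infty}\uparrow\Zs^{\levb}(t)$ of Definition~\ref{defn:BLP1} has a law determined by those of the $\Zs^{\levb}$. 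Therefore $\Zs\eqdis\Zs'$.

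The only genuinely delicate point is the first step — making sure the derivative-matching argument is valid and that it pins down \emph{all four} parameters and not merely some combination. The finiteness of $\kappa_b$ on $(2,\infty)$ and the smoothness there (needed to differentiate under the integral) follow from \eqref{eq:mu} by the same dominated-convergence estimates used implicitly around \eqref{eq:mart}; and the disjoint-support observation $\bar\zz\le-\log 2\le\zz$ on $\Rdtwo$ is exactly what lets one separate $\mmu_b$ from its reflection $\bar\mmu_b$, which is the crux (and is the feature special to $\mLevy_b=0$: if $\mLevy_b\neq0$ it would contribute a term $\int\e^{q\zz}\zz^3\mLevy_b(d\zz)$ overlapping the range of $\zz^3\mmu_b$, and one could no longer disentangle them — this is precisely why the lemma restricts to $\mLevy_b=0$, and why its analogue for $\kappa$ alone fails, cf.\ Lemma~\ref{lem:triple}). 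Everything else is bookkeeping on the construction in Section~\ref{sec:BLP}.
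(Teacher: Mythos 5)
Your proof is correct and matches the paper's approach: both use the third-derivative Laplace-transform argument together with the key observation that $\mmu_b$ is supported on $[-\log 2,0)$ while $\bar\mmu_b$ is supported on $(-\infty,-\log 2]$ (so they are separated, with equality only at the single fixed point $-\log 2$), yielding $\mmu_b=\mmu'_b$, and then iterate down the derivative order to recover the remaining parameters; the paper treats your ``second step'' (equal characteristics $\Rightarrow$ equal laws) as immediate from the definition. One small caution: $\kappa_b(0)=-k_b+\mmu_b(\Rdtwo)$ may be $+\infty$ when $\mmu_b$ is not a finite measure, so you should recover $k_b=k'_b$ by comparing the finite values $\kappa_b(q)=\kappa'_b(q)$ at a fixed $q>2$ once the other terms have been matched, rather than by evaluating at $q=0$.
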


 \begin{proof}
   Since the third order derivatives of $\kappa'_b$ and $\kappa_b$ are equal for all $q>2$, by a similar argument as in the proof of Lemma \ref{lem:triple}, we find that $\mmu_b' + \bar{\mmu}_b'= \mmu_b + \bar{\mmu}_b$. As $\mmu_b'$ and $\mmu_b$ are supported on $[\log 2, 0]$, we hence find that $\mmu'_b=\mmu_b$. By iterating this argument over the lower order of derivatives, we conclude that $\Zs$ and $\Zs'$ have the same 
characteristics, thus the same law. 
 \end{proof}

\subsection{Homogeneous growth-fragmentations}\label{sec:HGF}
For every $x>0$, write $P_x$ for the law of the homogeneous cell process $X: =x \exp(\xi)$, where $\xi$ is a SNLP with $\kappa$ defined as in \eqref{eq:kappa}. If $\xi$ is killed at a time $\zeta$, then by convention we denote $X(t) = \partial$ for all $t\geq \zeta$, where $\partial$ is the cemetery state. 
Let $\Xs$ be a homogeneous growth-fragmentation associated with $X$, which was informally described in the \hyperref[sec:intro]{Introduction}. By connecting to branching L\'evy processes, we shall prove in this section that the law of $\Xs$ is characterized by the cumulant function $\kappa$. 


In that direction, let us present the rigorous construction of $\Xs$, which is only a slight modification of that in \cite{Bertoin:growth}. We start with listing the jumps of $X$ in the following way. Fix $q>2$ and $K > \kappa(q)$. Recalling that the jump process $\Delta \xi$ is a Poisson point process with characteristic measure $\mLevy$ and using the compensation formula (see e.g. \cite{Bertoin:Levy}), we get for every $x>0$
\begin{linenomath}\begin{align} 
  \Exp[x]{ \sum_{0\leq s}|\Delta X(s)|^q \e^{-K s}} & = \Exp[x]{\sum_{0\leq s} X(s-)^q (1 - \e^ {\Delta \xi(s)})^q \e^{-K s}} \\
&= \Exp[x]{\int_{0}^{\infty} \e^{-K s}X(s-)^q ds \int_{(-\infty,0)} (1 - \e^{z})^q \mLevy (dz)} \\
&= \frac{\kappa(q) -\Phi(q)}{K -\Phi(q)}  x^q , \label{eq:hom}
  \end{align}\end{linenomath}
where $E_{x}$ stands for mathematical expectation under $\muA_{x}$. This implies that $P_x$-almost surely 
$$\sum_{s\geq 0}|\Delta X(s)|^q \e^{-K s}<\infty.$$ 
We may therefore list the jump times of $X$ in a sequence $(t_i, i\in \N)$ such that $(|\Delta X(t_i)|^q \e^{-K {t_i}}, i\in \N))$ is decreasing. By convention, if $X$ has a finite number of jumps, then the tail of this sequence is filled with $\infty$ with $\Delta X(\infty)=\partial$. In the sequel, {\bf the $i$-th jump time of $X$} shall always refer to the $i$-th element $t_i$ in this sequence. 

Let us give some basic notations. Let $\U:= \bigcup_{i=0}^{\infty} \N^{i}$ be the Ulam-Harris tree, by convention $\N^{0} = \{\emptyset\}$. 
An element $u\in \U$ is a finite sequence of natural numbers $u = (n_1, \ldots ,n_{|u|})$ where $|u|\in \N$ stands for the generation of $u$. We write $u_- = (n_1, \ldots ,n_{|u|-1})$ for its mother and $uk = (n_1, \ldots n_{|u|} ,k)$ for its $k$-th daughter with $k\in \N$. We also denote ${[u]}_i=(n_1, \ldots ,n_{i})$ for every $i \leq |u|$ with ${[u]}_0 = \emptyset$ by convention.  

We next construct the {\bf cell system driven by $X$}, which is a family of homogeneous cell processes indexed by $\U$
$$\Xc:= (\Xc_u, ~ u\in \U),$$ 
where each $\Xc_u$ depicts the evolution of the size of the cell indexed by $u$ as time passes. Specifically, we fix an arbitrary $x>0$, which is the initial size of the ancestor cell. Then we set the birth time of $\emptyset$ at $b_{\emptyset}:= 0$ and let the life career $\Xc_{\emptyset} = (\Xc_{\emptyset}(t), t\geq 0)$ be a process of law $\muA_{x}$. Given the life path of $\Xc_{\emptyset}$, then we generate the first generation. For $i\in \N$, say the $i$-th jump time of $\Xc_{\emptyset}$ is $t_i$ and $x_i:= -\Delta \Xc_{\emptyset}(t_i)$, we then set $b_i =t_i$ and build a sequence of conditional independent processes $(\Xc_i)_{i\in \N}$ with respective conditional distribution $\muA_{x_i}$. By convention, if $t_i= \infty$ (which means that $\Xc_{\emptyset}$ has less than $i$ jumps), then we agree that the cell $i$ as well as all its progeny have degenerate life careers, i.e. for every $v\in \U$ we set $\Xc_{iv} \equiv \partial$ and $b_{iv}= \infty$. We continue in this way to construct higher generations recursively. 
Write $\Pc_x$ for the law of this cell system $\Xc$ (recall that $x>0$ indicates the initial size of the Eve $\emptyset$, i.e. $\Xc_{\emptyset}(0)= x$).
According to \cite{Jagers}, the probability distribution $\Pc_x$ indeed exists and is uniquely determined by the above description. 

Finally, for every $t\geq 0$ let $ \Xs(t)$ be the multiset whose elements are sizes of the cells alive at time $t$, i.e. $$ \Xs(t) := \multiset{\Xc_u(t-b_u)~:~ u\in \U, b_u\leq t},$$
then we refer to $\Xs = (\Xs(t), t\geq 0)$ as a {\bf growth-fragmentation process driven by $X$} and write $\Ps_{x}$ for the law of $\Xs$ under $\Pc_{x}$. 

\begin{rem}
The construction of the cell system $\Xc$ is only a slight modification of that of a cell system in \cite{Bertoin:growth}, and that of a general branching process (also called Crump-Mode-Jagers process) in \cite{Jagers}. The only difference lies in the fact that, in \cite{Bertoin:growth} daughters are listed in decreasing order of the sizes at birth, and in \cite{Jagers} daughters are enumerated by their birth times. However, in whole generality, it is not always possible to enumerate the jumps of a homogeneous process $X$ in decreasing order of jump sizes or increasing order of jump times. 
\end{rem}

\begin{rem} 
If we use a different way to enumerate the jumps of $X$, it is intuitively clear that the new cell system is the same as the original one, up to a permutation of $\U$. Thus the growth-fragmentation $\Xs$ obviously does not depend on the method of enumeration and the law of $\Xs$ is determined by $X$. 
\end{rem}

We now present a connection between homogeneous growth-fragmentation processes and BBLPs.
\begin{prop}\label{prop:1}
Let $\xi$ be a SNLP with characteristics $(\sigma, c, \mLevy, k)$ and $\kappa$ defined as in \eqref{eq:kappa} and $\Xs$ be a homogeneous growth-fragmentation process (starting from $1$) driven by $X:= \exp(\xi)$. Then the process $\log \Xs$ is the unique (in law) BBLP with cumulant $\kappa$ and $\mLevy_b = 0$. Specifically, $\log \Xs$ has characteristics $(\sigma, c_b, 0, k, \mmu_b)$, where 
\begin{equation}\label{eq:BLP}
 c_b= c + \int_{(-\infty,-\log 2)}(1- 2\e^{\zz}) \mLevy (d\zz), \text{~and}\quad \mmu_b= \ind{(-\log 2, 0)}(\mLevy+\mbar) + \frac{1}{2}\ind{ -\log 2}(\mLevy+\mbar).
\end{equation}
In particular, we have 
$$\Exps{\sum_{x\in \Xs(t)} x^q} = \exp\left(\kappa(q) t\right) \quad \text{for all } q\geq 2 \text{ and } t\geq 0.$$ 
\end{prop}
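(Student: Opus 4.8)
The plan is to prove Proposition~\ref{prop:1} by identifying the cell system driven by $X=\exp(\xi)$ with the genealogical structure of a BBLP and then invoking the uniqueness result of Lemma~\ref{lem:CFP}. Taking logarithms throughout, each cell process $\log \Xc_u$ is a copy of the SNLP $\xi$ (up to the additive shift coming from the birth size), and at each jump time of the mother a daughter is created at an additive displacement $\bar{z}=\log(1-\e^{z})$ relative to the mother, where $z=\Delta\xi$ is the mother's jump; meanwhile the mother itself jumps by $z$. So the pair (mother-after-jump, daughter) sits at relative positions $(z,\bar z)$, which is precisely the branching rule of a BBLP. The first step is therefore to make this correspondence rigorous: show that $\log \Xs$, constructed from the cell system, satisfies Definition~\ref{defn:BLP1}. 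The slightly delicate point is bookkeeping of which of the two particles $z$ or $\bar z$ is ``the mother'' versus ``the daughter'': in the growth-fragmentation the mother keeps its label and the daughter gets a new one, whereas in the $\cutb{\cdot}$ operation on BBLPs one keeps the child closer to the mother. Since both $z$ and $\bar z$ range over $(-\infty,0)$ and the roles are symmetric once we pass to the point-process (multiset) description, the two constructions yield the same law of $\log\Xs$ as a point process.

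Next I would pin down the characteristics. The branching measure must be the law (times rate) governing the unordered pair of relative displacements $\{z,\bar z\}$. A jump of $\xi$ of size $z\in(-\infty,0)$ occurs with intensity $\mLevy(dz)$ and produces the displacement pair $\{z,\bar z\}$. Pushing $\mLevy$ forward under $z\mapsto\bar z$ gives $\mbar$, so the intensity of the pair $\{z,\bar z\}$, indexed by the coordinate in $(-\log 2,0)$ (the larger of the two), is $\ind{(-\log 2,0)}(\mLevy+\mbar)$; the atom at $-\log 2$ (where $z=\bar z=-\log 2$) gets a factor $\tfrac12$ because such a jump is counted once in $\mLevy$ and once in $\mbar$ but corresponds to a single branching event with both children at $-\log 2$. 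This is exactly $\mmu_b$ in \eqref{eq:BLP}. The diffusion coefficient is unchanged, $\sigma_b=\sigma$, and there is no residual jump part, $\mLevy_b=0$, since every jump of $\xi$ has been reinterpreted as a branching event. The killing rate is $k$. The drift $c_b$ is then forced: the compensation term in the L\'evy--Khintchine formula \eqref{eq:LK} for $\xi$ is $\int (1-\e^z)\mLevy(dz)$ applied at rate, whereas the BBLP construction in Definition~\ref{defn:BLP} adds back $\int_{\Rdtwo}(1-\e^z)\mmu_b(d\rr)$ as a drift to each $L_v$; matching the net drift of a typical particle between the $\xi$-description and the BBLP-description yields $c_b=c+\int_{(-\infty,-\log 2)}(1-2\e^{z})\mLevy(dz)$, precisely as in Lemma~\ref{lem:triple}/Proposition~\ref{prop:st}. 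I would verify this by a direct check that the Laplace exponent of a tagged particle coincides, or equivalently by checking $\kappa_b=\kappa$ using \eqref{eq:kappaCFP} and \eqref{eq:kappa} together with the relation $\int(1-\e^z)^q\mLevy(dz)=\int_{\Rdtwo}\big((1-\e^z)^q+\e^{q\bar z}\cdot 0\big)\dots$; in any case the identity $\kappa_b=\kappa$ reduces to the computation already performed in Lemma~\ref{lem:st} for switching transforms and is purely algebraic.

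Having shown $\log\Xs$ is \emph{a} BBLP with $\mLevy_b=0$ and cumulant $\kappa$, uniqueness is immediate from Lemma~\ref{lem:CFP}: any two BBLPs with vanishing $\mLevy_b$ and equal cumulant have the same law, so $\log\Xs$ is \emph{the} (in law) such BBLP. Finally, the moment identity $\Exps{\sum_{x\in\Xs(t)}x^q}=\exp(\kappa(q)t)$ for $q\geq 2$ follows by applying \eqref{eq:mart} to $\Zs=\log\Xs$: $\sum_{z\in\log\Xs(t)}\e^{qz}=\sum_{x\in\Xs(t)}x^q$, and $\kappa_b=\kappa$, so $\Exps{\sum_{x\in\Xs(t)}x^q}=\e^{\kappa_b(q)t}=\e^{\kappa(q)t}$, and $\kappa(q)<\infty$ for $q\geq 2$ by the remark following \eqref{eq:kappa}.

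The main obstacle, I expect, is the first step: carefully justifying that the recursively-defined cell system $\Xc$ (with its specific, order-dependent enumeration of jumps into the Ulam--Harris tree $\U$) produces the same \emph{point process} $\log\Xs$ as the BBLP construction of Definition~\ref{defn:BLP1} (which uses the \emph{binary} tree $\Utwo$ and the monotone limit over truncation levels $\levb$). One must match the Crump--Mode--Jagers recursion of \cite{Jagers} against the binary-branching recursion, check that the truncation $\cutb{\cdot}$ on the BBLP side corresponds to discarding sufficiently small daughter cells on the growth-fragmentation side, and confirm that the non-explosion/summability estimate \eqref{eq:hom} is what legitimizes the enumeration and the monotone limit. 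This is conceptually the analogue of Proposition~3 in \cite{Bertoin:growth}, and I would either cite that result and indicate the (minor) modifications, or reproduce the coupling argument at the level of the two recursive constructions; everything after this identification is routine algebra plus an appeal to Lemma~\ref{lem:CFP} and \eqref{eq:mart}.
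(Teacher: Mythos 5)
Your overall strategy is the same as the paper's --- identify $\log\Xs$ as a BBLP, compute its characteristics, then invoke Lemma~\ref{lem:CFP} for uniqueness and \eqref{eq:mart} for the moment identity --- and your derivation of the characteristics in \eqref{eq:BLP}, including the $\tfrac12$ weight at $-\log 2$, is correct. The uniqueness and moment steps are fine. But the step you flag as ``the main obstacle'' and then leave to a gestural ``reproduce the coupling argument'' is in fact the entire substance of the proof, and your one-sentence justification (``the roles are symmetric once we pass to the multiset description'') does not close it. The issue is exactly the one you identify: when $\Delta\xi(t)<-\log 2$, the GF's ``mother'' is the \emph{smaller} of the two resulting fragments, whereas Definition~\ref{defn:BLP} labels the \emph{larger} child $v\vl$; the two recursions run over different trees ($\U$ versus $\Utwo$), list siblings in different orders, and use different truncation schemes. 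Asserting that the resulting point processes agree in law is a claim that needs a proof, not a symmetry heuristic, because each recursion only produces conditional independence given a particular enumeration, and one must show the two families of conditional laws can be coupled.

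What the paper actually does to fill this: it introduces an auxiliary binary system $\Ls$ in which each $L_v$ is a copy of $\xi$ with jumps $<\lev$ removed, branching occurs at rate $\mLevy((-\infty,\lev))$, and the children of $v$ are labelled $(v\vl,v\vr)$ \emph{without} sorting by size. It then defines a bijection $h\colon\Utwo\to\Utwo$ that sorts children by size a posteriori, and uses $h$-invariance of the multiset to recognize $\Ls$ as a BBLP in the sense of Definition~\ref{defn:BLP}. The leftmost branch $\A_\Rm$ of $\Ls$ is shown to have law $\xi$, and the truncated system $\hat{\Ls}$ (suppressing children born at relative displacement $\leq\levb$) is identified \emph{both} with $\exp$ of a truncated cell system \emph{and} with the BBLP $\Zs^{\levb}$, using Lemma~3 of \cite{Bertoin:CF}; then $\levb\to-\infty$ gives the claim. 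Two further points you should note: the truncation used here is by \emph{relative jump size} (i.e.\ kill cells whose size drops by a factor $\leq\e^{\levb}$), not the absolute-size truncation $\cutc{\cdot}$ of Section~\ref{sec:gf}, and these are not interchangeable; and the monotone-limit step over $\levb$ is what \eqref{eq:mu} and \eqref{eq:hom} are needed for. Without the $\Ls$/$h$ construction (or an equivalent explicit coupling), the identification of $\log\Xs$ with a BBLP remains unproved, and the rest of your argument --- which relies on it --- does not get off the ground.
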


\begin{rem}
So the homogeneous growth-fragmentation $\Xs$ is a {\it compensated fragmentation process} in the sense of \cite{Bertoin:CF}. When $\sigma= 0$, $c=0$ and $ \int_{(-\infty,0)} (1-\e^{\zz}) \mLevy(d\zz) <\infty$, it is a homogeneous fragmentation process in the sense of \cite{Bertoin:Homo}. 
\end{rem}

Proposition \ref{prop:1} extends Proposition 3 in \cite{Bertoin:growth}, which obtained the same result for the case when the L\'evy measure $\mLevy$ of $\xi$ satisfies $\mLevy( (-\infty, -\log 2)) = 0$.
Before tackling the proof of Proposition \ref{prop:1}, let us provide a variation of Theorem \ref{thm:2'}, which summarizes the discussion in this section. 

\begin{cor}\label{cor:prop1}
Let $\xi$ and $\tilde{\xi}$ be two SNLPs with respective cumulant functions $\kappa$ and $\tilde{\kappa}$ defined as in \eqref{eq:kappa}. Let $\Xs$ and $\tilde{\Xs}$ be the homogeneous growth-fragmentations associated with $\xi$ and $\tilde{X}$ respectively (with the same initial size of ancestor $x>0$),
The following statements are equivalent: 
  \begin{enumerate}[label=(\roman*)]
  \item $\kappa = \tilde{\kappa}$;
  \item  $\tilde{\xi}$ has the same law as a switching transform of $\xi$;
  \item the homogeneous growth-fragmentations $\Xs$ and $\tilde{\Xs}$ have the same finite-dimensional distribution.
  \end{enumerate}
\end{cor}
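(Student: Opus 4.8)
The plan is to close the cycle $(i) \Rightarrow (ii) \Rightarrow (iii) \Rightarrow (i)$ using the results assembled in this section, so that no additional argument is really needed beyond bookkeeping. For $(i) \Rightarrow (ii)$, I would invoke Proposition \ref{prop:st} directly: if $\kappa = \tilde{\kappa}$, then $\tilde{\xi}$ has the same law as the switching transform $\xi^{[p]}$ for the explicit Radon–Nikodym choice $p(z) = \bar{\Lambda}_{\tilde\xi}(dz)/(\Lambda_{\tilde\xi}(dz) + \bar{\Lambda}_{\tilde\xi}(dz))$. (Here one should be slightly careful about notation, since in the statement the second SNLP is called $\tilde\xi$ rather than $\gamma$, but Proposition \ref{prop:st} applies verbatim with the obvious relabelling.)

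For $(ii) \Rightarrow (iii)$, suppose $\tilde\xi \eqdis \xi^{[p]}$ for some switching function $p$ satisfying \eqref{eq:st_H}. By Lemma \ref{lem:st}, the switching transform preserves the cumulant, so $\tilde\kappa = \kappa^{[p]} = \kappa$. Now apply Proposition \ref{prop:1} to both $\xi$ and $\tilde\xi$: the processes $\log\Xs$ and $\log\tilde\Xs$ are BBLPs with zero L\'evy measure component ($\Lambda_b = 0$) and with cumulants $\kappa$ and $\tilde\kappa$ respectively. Since $\kappa = \tilde\kappa$, Lemma \ref{lem:CFP} gives that $\log\Xs$ and $\log\tilde\Xs$ have the same law, hence $\Xs$ and $\tilde\Xs$ have the same law, in particular the same finite-dimensional distributions. (Both start from the same ancestor size $x>0$; note that starting from $x$ rather than $1$ only shifts the BBLP by $\log x$, which does not affect the comparison.)

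For $(iii) \Rightarrow (i)$, the cleanest route is again through Proposition \ref{prop:1}, which yields the cumulant identity
\[
  \Exps{\sum_{y \in \Xs(t)} y^q} = \exp(\kappa(q) t), \qquad
  \tilde{\Exps{}}{\sum_{y \in \tilde\Xs(t)} y^q} = \exp(\tilde\kappa(q) t),
  \qquad q \geq 2,\ t\geq 0.
\]
If $\Xs$ and $\tilde\Xs$ have the same finite-dimensional distribution, then for each fixed $t$ the random multisets $\Xs(t)$ and $\tilde\Xs(t)$ are equal in law, so the left-hand sides above agree for all $q\geq 2$ and $t\geq 0$; letting $t\to 0^+$ after dividing through (or simply differentiating in $t$ at $0$) forces $\kappa(q) = \tilde\kappa(q)$ for all $q\geq 2$. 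Since $\kappa$ and $\tilde\kappa$ are real-analytic on $(2,\infty)$ and both finite and convex on $[0,\infty)$, agreement on $[2,\infty)$ propagates to all of $[0,\infty)$ by analytic continuation (alternatively, one reruns the derivative argument of Lemma \ref{lem:triple} to recover the full characteristics modulo switching).

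The only mildly delicate point — and the one I would expect to be the main obstacle — is the measurability/integrability check ensuring that in $(iii) \Rightarrow (i)$ one may legitimately extract $\kappa$ from the one-dimensional marginals: a priori $\Exps{\sum_{y\in\Xs(t)} y^q}$ could be infinite, but Proposition \ref{prop:1} already guarantees finiteness for $q\geq 2$, so this is not a genuine difficulty here. Everything else is a direct assembly of Proposition \ref{prop:st}, Lemma \ref{lem:st}, Proposition \ref{prop:1}, and Lemma \ref{lem:CFP}.
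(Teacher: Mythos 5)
Your proof is correct and follows essentially the same route as the paper: the equivalence $(i)\Leftrightarrow(ii)$ via Proposition~\ref{prop:st} and Lemma~\ref{lem:st}, and the equivalence $(i)\Leftrightarrow(iii)$ via Proposition~\ref{prop:1} and Lemma~\ref{lem:CFP}; you merely arrange these as the cycle $(i)\Rightarrow(ii)\Rightarrow(iii)\Rightarrow(i)$ rather than as two separate equivalences, which is immaterial. One small inaccuracy: you assert that $\kappa$ and $\tilde\kappa$ are ``finite and convex on $[0,\infty)$,'' but by \eqref{eq:kappa} $\kappa$ may take the value $+\infty$ on part of $[0,2)$, so the extension from $[2,\infty)$ to $[0,\infty)$ is not a straightforward analytic continuation; the alternative you yourself offer --- rerunning the derivative argument of Lemma~\ref{lem:triple} to recover the characteristics, which then determine $\kappa$ on all of $[0,\infty)$ --- is the correct route and is precisely what the paper relies on (via Lemma~\ref{lem:triple} or Lemma~\ref{lem:CFP}).
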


\begin{proof}[Proof of Corollary \ref{cor:prop1}]
 $(i)\Leftrightarrow (ii)$: The two directions follow respectively from Proposition \ref{prop:st} and Lemma \ref{lem:st}.

$(i)\Leftrightarrow (iii)$: We know from Proposition \ref{prop:1} that $\log \Xs$ and $\log \tilde{\Xs}$ are BBLPs with respective cumulants $\kappa$ and $\tilde{\kappa}$. 
If $\Xs$ and $\tilde{\Xs}$ have the same finite-dimensional distribution, then so do the BBLPs $\log \Xs$ and $\log \tilde{\Xs}$, and in particular their cumulant are the same. 
Conversely, if $\kappa = \tilde{\kappa}$, then we deduce from Lemma \ref{lem:triple} or Lemma \ref{lem:CFP} that the BBLPs $\log \Xs$ and $\log \tilde{\Xs}$ have the same characteristics, thus the same finite-dimensional distribution. 
\end{proof}

The rest of this section is devoted to the proof of Proposition \ref{prop:1}. 
\begin{proof}[Proof of Proposition \ref{prop:1}]

The idea of the proof is similar to that of Proposition 3 in \cite{Bertoin:growth}.
Let $\Zs$ be a BBLP with characteristics $(\sigma, c_b,0, k, \mmu_b)$ and write $(\Zs^{\levb}, -\infty < \levb \leq -\log 2)$ for the family of BBLPs as in Definition \ref{defn:BLP}, each $\Zs^{\levb}$ a BBLP with characteristics $(\sigma, c_b,\ind{[\lev,0)}\mmu_b, k,\ind{[-\log 2, \lev)}\mmu_b)$, 
such that 
$$ \Zs(t) = \lim_{\levb \to -\infty}\uparrow \Zs^{\levb}(t), \quad t \geq 0. $$
We shall check for every $\levb\in (-\infty , -\log 2)$ that $\exp(\Zs^{\levb})$ has the same dynamics as a truncated cell system associated with the cell process $X = \exp(\xi)$, in which each cell $u\in \U$ is killed at the first instant $s$ with $\Xc_{u}(s)\leq \e^{\levb} \Xc_{u}(s-)$, together with her future descents (born at time $>s$); furthermore, for each $j\in \N$ the daughter cell $uj$ is killed at birth (together with its descents) whenever her size is less than or equal to $\e^{\levb}$ times the size of her mother immediately before the birth event, i.e. $\Xc_{uj}(0)\leq \e^{\levb} \Xc_{u}(b_{uj}-)$. 
Letting $\levb \to -\infty$, we conclude from Definition \ref{defn:BLP} and the monotonicity that $\log \Xs$ has the same distribution as $\Zs$. Then it is straightforward to check that $\log \Xs$ indeed has cumulant $\kappa$ and the identity in the proposition thus follows from \eqref{eq:mart}. The uniqueness of $\log \Xs$ follows from Lemma \ref{lem:CFP}.  

So it remains to prove that $\exp(\Zs^{\levb})$ indeed has the same law as the truncated cell system. 
In this direction, let us construct an auxiliary particle system as follows, which is a minor modification of Definition \ref{defn:BLP}. Fix an arbitrary $\levb< -\log 2$. 
Let us consider three independent sequences of processes $(\Deltab_v)_{v \in \Utwo}$, $(L_v)_{v \in \Utwo}$ and $(\Deltaa_{v\vl}, \Deltaa_{v\vr})_{v \in \Utwo}$ such that: 
\begin{itemize}
\item $(\Deltab_v)_{v \in \Utwo}$ is a family of i.i.d. exponential variables with parameter $\Lambda((\infty, \lev))$;\item $(L_v)_{v \in \Utwo}$ is a family of independent copies of SNLP $\tilde{\xi}$ with characteristics $(\sigma, \tilde{c}, \ind{[\lev, 0)}\Lambda, k)$ where $\tilde{c}:= c + \int_{(-\infty, \lev)}(1-\e^{\zz})\Lambda(d\zz)$.
\item $(\Deltaa_{v\vl}, \Deltaa_{v\vr})_{v \in \Utwo}$ is a family of i.i.d. pairs of random variables such that each $\Deltaa_{v\vl}$ is distributed according to the conditional probability $\Lambda(\cdot ~|~(-\infty, \lev))$ and $\Deltaa_{v\vr} = \overline{\Deltaa_{v\vl}} = \log (1- \exp(\Deltaa_{v\vr}))$. 
\end{itemize}
Write $\beta_v := \sum_{j=0}^{|v|-1}\Deltab_{[v]_j}$ for the birth time, and define by induction $a_{vi} = a_v+ L_v(\Deltab_v)+ \Deltaa_{vi}$ for $i\in \{\vl, \vr\}$ with $a_{\emptyset}=0$. So we define $\Ls$ by
 $$\Ls (t):= \multiset{a_v + L_v(t-\beta_v) ~:~ v\in \Utwo, \beta_v \leq t < \beta_v+  \Deltab_v}, \quad t\geq 0.$$

We stress that unlike in Definition \ref{defn:BLP}, $\ind{(-\infty, \lev)}\Lambda$ is not supported on $[-\log 2, 0)$, so $\Deltaa_{v\vl}$ may be possibly smaller than $\Deltaa_{v\vr}$. 
However, we may obtain a BBLP by changing the indices of the particles. Specifically, let us define a bijection $h: \Utwo \to \Utwo$ in the following way. Let $h(\emptyset):=\emptyset$. Given $h(v)$ with $v\in \Utwo$ by induction, then we assign the index of $\max(\Deltaa_{h(v)\vl} , \Deltaa_{h(v)\vr} )$ to $h(v\vl)$ and let $h(v\vr)$ be the sister of $h(v\vl)$. 
We therefore define $(\Deltaa'_{v\vl},\Deltaa'_{v\vr}):= (\Deltaa'_{h(v\vl)},\Deltaa'_{h(v\vr)})$, $\beta'_{v}:= \beta_{h(v)}$ and $L'_{v}:= L_{h(v)}$ for each $v\in \Utwo$, and further define recursively $a'_{vi} := a'_v+ L'_v(\Deltab'_v)+ \Deltaa'_{vi}$. As $h$ is a bijection, it is plain that
    $$\Ls (t)= \multiset{a'_v + L'_v(t-\beta'_v)~:~ v\in \Utwo,\beta'_v \leq t < \beta'_v+ \Deltab'_v}, \quad t\geq 0.$$
Let $$\mmu_{\Ls}=\frac{1}{2}\ind{-\log 2}(\Lambda +\mbar)+ \ind{(-\log 2, \lev)}\Lambda+\ind{( -\log 2,0)}\mbar,$$
then $\mmu_{\Ls}$ is supported on $[-\log 2, 0)$ and we observe that $((\Deltaa'_{h(v)\vl} , \Deltaa'_{h(v)\vr} ), v\in \Utwo)$ is a family of i.i.d. random variables such that $\Deltaa'_{h(v)\vl}$ has conditional law $\mmu_{\Ls}(\cdot~|~\Rdtwo)$ and $\Deltaa'_{h(v)\vr} = \overline{\Deltaa'_{h(v)\vl}}$, that $(\beta'_{v}, v \in \Utwo)$ is a family of i.i.d. exponential variables with parameter $\Lambda((-\infty, \lev))=\mmu_{\Ls}(\Rdtwo)$ and that $(L'_{v}, v \in \Utwo)$ is a family of independent copies of $\tilde{\xi}$. 
Using this point of view, we hence deduce that $\Ls$ is a BBLP as in Definition \ref{defn:BLP}, with characteristics $(\sigma, c_{b} ,\mLevy_{\Ls}:=\ind{[\lev, 0)}\Lambda, k, \mmu_{\Ls})$, where we have used the fact that
$$ \tilde{c} - \int_{[-\log 2, 0)} (1-\e^{\zz})\mmu_{\Ls}(d \zz) =c +  \int_{(-\infty, -\log 2)}(1- 2\e^{\zz})\Lambda(d\zz)= c_{b}. $$
 
Let us next give some remarks on the {\it leftmost} branch of the particle system $\Ls$, that is the process obtained by concatenating the segments of size processes of particles $\left\{\emptyset, \vl, \vl\vl, \vl\vl\vl, \ldots \right\}=:\Rm  \subset \Utwo$:  
\begin{equation}\label{eq:Rm}
\A_{\Rm}(t)  := \sum_{v\in \Rm} \ind{\beta_v \leq t < \beta_v+ \Deltab_v} (a_v + L_v(t-\beta_v)), \quad t\geq 0.
\end{equation}
Using elementary properties of L\'evy processes, we find that $\A_{\Rm}$ has the same distribution as $\xi$. We also notice that for every time $t\geq 0$ when $\Delta \A_{\Rm}(t) < \lev$, that is equivalently $\exp(\A_{\Rm})$ has a jump of size $-\Delta \exp(\A_{\Rm}(t))>\e^{\levb}\exp(\A_{\Rm}(t-))$, there is $t = \beta_v + \Deltab_{v}$ for a certain $v\in \Rm$. A fortiori, for every $s\geq 0$ such that $\Delta \A_{\Rm}(s) <\levb< \lev$, that is equivalently $\exp(\A_{\Rm}(s))\leq \e^{\levb}\exp( \A_{\Rm}(s-))$, there is $s = \beta_w + \Deltab_{w}$ for a certain $w\in \Rm$.

We finally consider the process $\hat{\Ls}$, which is associated with a system derived from $\Ls$, by suppressing for each $v\in \Utwo$ the child that corresponds to $\Deltaa_{v\vl}$ whenever $\Deltaa_{v\vl} \leq \levb$. So we can explain the dynamics of $\exp(\hat{\Ls})$ as follows. This system starts with an Eve cell whose size evolves according to $\Xc_{\emptyset}:= \exp(\A_{\Rm})$, and the Eve cell is killed (together with her future descents) at the first instant $s\geq 0$ when there is $\Xc_{\emptyset}(s)\leq \e^{\levb} \Xc_{\emptyset}(s-)$. Further, for each time $t\leq s$ when $\Xc_{\emptyset}$ has a jump of size $y := -\Delta \Xc_{\emptyset}(t)>\e^{\levb} \Xc_{\emptyset}(t-)$, there is $t = \beta_v + \Deltab_{v}$ for a certain $v\in \Rm$, then a daughter cell with initial size $y$ is born and the size of this daughter cell evolves according to the process $\exp(\A_{v\vr \Rm})$, where $\A_{v\vr \Rm}$ is the process associated with $v\vr\Rm:= \left\{ v\vr w ~:~w\in \Rm \right\}$ as in \eqref{eq:Rm}. Note that the process $\exp(\A_{v\vr \Rm})$ has the same distribution as $-y \exp(\xi)$. This daughter cell evolves independently of the other daughter cells, is killed at the first instant when her size drops suddenly by factor smaller than $\e^{\levb}$, and gives birth to grand-daughter cells each  time her size drops suddenly by factor smaller than $\e^{\lev}$ (note that $\e^{\lev}>\e^{\levb}$) before being killed (with killing time included). We continue so on and so forth to obtain the higher generations. So we conclude that $\exp(\hat{\Ls})$ indeed has the same law as a truncated cell system associated with $X = \exp(\xi)$. 

On the other hand, using the point of view that $\Ls$ is a BBLP with characteristics $(\sigma, c_{b},\mLevy_{\Ls} ,k, \mmu_{\Ls} )$, since $\Deltaa_{v\vr}> \levb$ always holds by the construction, we may equivalently view $\hat{\Ls}$ as the system obtained from $\Ls$ by suppressing for each $v\in \Utwo$ the smaller child $\Deltaa'_{v\vr}$ whenever $\Deltaa'_{v\vr} \leq \levb$. We hence deduce from Lemma 3 in \cite{Bertoin:CF} that $\hat{\Ls}$ is a BBLP with characteristics $(\sigma, c_{b},\cutb{\mLevy}_{\Ls} ,k, \cutb{\mmu}_{\Ls} )$, where $\cutb{\mLevy}_{\Ls}$ and $\cutb{\mmu}_{\Ls}$ are derived from $\mLevy_{\Ls}$ and $\mmu_{\Ls}$ as in \eqref{eq:cutb}. We check that $(\cutb{\mLevy}_{\Ls} , \cutb{\mmu}_{\Ls} ) =(\ind{[\lev,0)}\mmu_b,\ind{[-\log 2, \lev)}\mmu_b)$, so the two BBLPs $\hat{\Ls}$ and $\Zs^{\levb}$ have the same characteristics, which ends the proof.   
\end{proof}

 
\section{Markovian growth-fragmentation processes and bifurcators}\label{sec:cell}
In this section, we shall extend the notion of bifurcator to general cell processes and further establish a sufficient condition for different Markovian growth-fragmentations to have the same distribution, which finally orients us toward the proofs of Theorem \ref{thm:2'} and Theorem \ref{thm:2}.
Let us first present a sufficient condition for non-explosion of growth-fragmentations, which slightly generalizes the approach in \cite{Bertoin:growth}.

\subsection{A sufficient condition for non-explosion}\label{sec:gf}
A Feller process $X = (X(t), t\geq 0)$ is called a {\bf cell process}, if it has c\`adl\`ag path on $(0,\infty)\cup \{\partial\}$ with no positive jumps. We refer to $\partial$ as a cemetery point and denote the lifetime of $X$ by $\zeta:= \inf \left\{ t\geq 0~:~ X(t)=\partial \right\}\in [0,\infty]$. 
 For every $ x\geq 0$ we write $\muA_{x}$ for the law of $X$ with initial value $X(0) = x$ and $E_{x}$ for mathematical expectation under $\muA_{x}$.


As we have discussed in Section \ref{sec:HGF}, to study the growth-fragmentation associated with $X$, we first want an ordering of the jumps of $X$, which is necessary to rigorously build a cell system driven by $X$. Furthermore, we need a sufficient condition for the {\it non-explosion} of the cell system, that is for every $t\geq 0$ the multiset of the sizes of all cells alive at time $t$ is locally finite. For these purposes, we henceforth suppose the following hypothesis for $X$, which is reminiscent of that in Theorem 1 in \cite{Bertoin:growth}.


\begin{enumerate}[label={\bf [H]}, ref={\bf [H]},leftmargin=3.0em]
\item \label{H} There exists a measurable function $f \colon [0,\infty)\times ((0,\infty)\cup\{\partial\}) \to [0, \infty)$, with $\ft{r}{\partial}\equiv 0$ for every $r\geq 0$, which fulfills
 \begin{equation}\label{eq:exc}
    \inf_{ r< l, x>a} \ft{r}{x}>0, \quad \text{ for every } a, l>0,
  \end{equation}
such that for every $x>0$ and every $s,t\geq 0$, there is
\begin{equation}
  \Exp[x]{\ft{s+t}{X(t)} + \sum_{0\leq r\leq t} \ft{s+r}{-\Delta X(r)} } \leq \ft{s}{x}. 
\end{equation}
\end{enumerate}

\begin{eg}\label{eg:hom}
For $x>0$, let $P_x$ be the law of the homogeneous cell process $\XO=x \exp(\xi)$. Fix $q\geq 2$ and $K \geq \kappa(q)$, we have by an analogue of \eqref{eq:hom} that for every $x>0$ and every $s,t\geq 0$ 
\begin{linenomath}\begin{align} 
&\Exp[x]{X^{(0)}(t)^q \e^{-K (t+s)} + \sum_{0\leq r\leq t}|\Delta X^{(0)}(r)|^q \e^{-K (r+s)}}\\
&=  \left(\e^{(\Phi(q) -K) t} + \frac{\kappa(q) -\Phi(q)}{K -\Phi(q)} (1 - \e^{(\Phi(q)-K) t}) \right) 
x^q \e^{-K s} \leq x^q \e^{-K s}.\end{align}\end{linenomath}
So $\XO$ satisfies \ref{H} with the function $(t,x)\mapsto x^q \e^{-K t}$. 
\end{eg}
From now on we fix a function $f$ such that \ref{H} holds for $X$. 
In particular \ref{H} entails that for every $x>0$
$$\sum_{r\geq 0} \ft{r}{-\Delta X(r)}< \infty \quad P_{x}\text{-almost surely}.$$ 
Hence we may naturally enumerate the jump times of $X$ by listing them in a sequence $(t_i)_{ i\in \N}$ such that $(\ft{t_i}{-\Delta X(t_i)})_{ i\in \N}$ is decreasing, and thus reproduce the construction in Section \ref{sec:HGF} to build a cell system $\Xc:= (\Xc_u, ~ u\in \U)$ driven by $X$, starting from an ancestor of initial size $x>0$, with birth time $b_u$ and life length $\zeta_u:= \inf\{t\geq 0~:~\Xc_u(t)=\partial\}$. Denote the sizes of the cells alive at time $t\geq 0$ by the multiset 
$$ \Xs(t) := \multiset{\Xc_u(t- b_u) ~:~ u \in \U, b_u\leq t< b_u+ \zeta_u},$$
then $(\Xs(t),t\geq 0 )$ is a {\bf growth-fragmentation process driven by $X$}. We write $\Pc_{x}$ for the law of $\Xc$ and $\Ps_{x}$ for the law of $\Xs$ under $\Pc_{x}$. 
It is intuitively clear that the law of $\Xs$ is independent of the enumeration method. 

For every non-negative measurable function $h: (0,\infty) \to [0,\infty)$ and every multiset $\ms$ with elements in $(0,\infty)$, introduce the notation 
$$\prm{\ms}{h} := \sum_{y\in \ms} h(y) \in [0, \infty].$$
Let us define for every $s\geq 0$ a space 
$\MF^s$: a multiset $\ms \in \MF^s$, if $\ms$ has elements in $(0, \infty)$ and $\prm{\ms}{\ft{s}{\cdot}} < \infty$. 
\begin{lem}\label{lem:exc}
Suppose that $X$ satisfies \ref{H} with a function $f$. 
Then we have for every $x> 0$ that
\begin{equation}
  \Exps[x]{ \prm{\Xs(t)}{f(s+t, \cdot)} } \leq \ft{s}{x}, \quad \text{for all}~ t, s \geq 0,
\end{equation}
where $\mathbf E_{x}$ denotes the mathematical expectation under $\Ps_{x}$. So we have $\Ps_{x}$-almost surely $\Xs(t) \in \MF^t$.  
\end{lem}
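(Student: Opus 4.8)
The plan is to prove the inequality
\[
  \Exps[x]{ \prm{\Xs(t)}{f(s+t, \cdot)} } \leq \ft{s}{x}
\]
by inducting over generations of the cell system $\Xc$, exploiting hypothesis \ref{H} generation by generation. Recall that the multiset $\Xs(t)$ decomposes along the Ulam--Harris tree $\U$ as $\Xs(t) = \multiset{ \Xc_u(t-b_u) : u\in\U,\ b_u\le t<b_u+\zeta_u }$. For $n\ge 0$, let $\Xs^{(\le n)}(t)$ collect only those cells $u$ with $|u|\le n$ (equivalently, intersect with the first $n+1$ generations). I would first show, for each fixed $n$,
\[
  \Exps[x]{ \prm{\Xs^{(\le n)}(t)}{f(s+t,\cdot)} } \leq \ft{s}{x},
\]
and then let $n\to\infty$; since each term is non-negative, monotone convergence gives the full statement, and then finiteness of the sum forces $\Xs(t)\in\MF^t$ almost surely under $\Ps_x$.

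For the inductive step, the key is to split the contribution of generation-$m$ cells into ``the ancestor's own remaining life'' versus ``the daughters born from the ancestor''. Concretely, condition on the life path of $\Xc_\emptyset$. The generation-$0$ contribution to $\prm{\Xs^{(\le n)}(t)}{f(s+t,\cdot)}$ is $\ft{s+t}{\Xc_\emptyset(t)}\ind{t<\zeta_\emptyset}$. Each daughter $i$ is born at time $b_i=t_i\le t$ with initial size $-\Delta\Xc_\emptyset(t_i)$, and by construction $\Xc_i$ has law $\muA_{-\Delta\Xc_\emptyset(t_i)}$ and is conditionally independent of everything else. Applying the induction hypothesis (for $n-1$ generations, started from the daughter's initial size, with the shifted ``age'' parameter $s+b_i$) to each daughter subtree and summing, the total generation-$\ge 1$ contribution is bounded in conditional expectation by $\sum_{0\le r\le t} \ft{s+r}{-\Delta\Xc_\emptyset(r)}$. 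Adding the generation-$0$ term and taking $\Exp[x]{\cdot}$, hypothesis \ref{H} (with the roles $s\mapsto s$, $t\mapsto t$) gives exactly the bound $\ft{s}{x}$. The base case $n=0$ is immediate: $\Exp[x]{\ft{s+t}{X(t)}}\le \ft{s}{x}$ is \ref{H} with no jump terms.

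The one point that needs care — and which I expect to be the only real obstacle — is making the conditioning/Fubini argument in the inductive step fully rigorous: one must justify interchanging expectation with the (countable) sum over daughters, and carefully track that the ``age shift'' for the $i$-th daughter's subtree is $s+b_i$ rather than $s$, matching the way the jump term $\ft{s+r}{-\Delta X(r)}$ appears in \ref{H}. Since all summands are non-negative this interchange is licensed by Tonelli, and the scaling/shift bookkeeping is routine once one writes the daughter's contribution as $\Expcond[x]{\prm{\Xs_i^{(\le n-1)}(t-b_i)}{f(s+t,\cdot)}}{\Xc_\emptyset}$ and applies the induction hypothesis with initial age $s+b_i$ and elapsed time $t-b_i$, so that $s+b_i+(t-b_i)=s+t$. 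Everything else is the formal translation of \ref{H} from the single cell $X$ to the branching population, exactly as in \cite{Bertoin:growth}.
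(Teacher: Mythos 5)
Your proposal is correct and rests on the same mechanism as the paper's proof: control the expectation generation by generation, invoking \ref{H} once per cell to bound the contribution of a subtree (with the age parameter shifted by the birth time) by the value of $f$ at the parent's jump, then pass to the limit over generations. The only difference is organizational: you run a top-down induction on the depth $n$, conditioning on $\Xc_\emptyset$ and applying the induction hypothesis to each daughter's subtree with elapsed time $t-b_i$ and age $s+b_i$, then invoke monotone convergence; the paper instead packages this as a non-negative super-martingale
\[
\Sigma(i) = \sum_{|u|\le i,\, b_u\le t} \ft{s+t}{\Xc_u(t-b_u)} + \sum_{|v|=i,\, b_v\le t}\;\sum_{b_v\le r\le t}\ft{s+r}{-\Delta\Xc_v(r-b_v)},
\]
indexed by generation $i$, where the extra jump-sum term plays the role of a ``potential'' that \ref{H} converts into the next generation's $f$-values, and then applies Fatou to $\Sigma(\infty)\ge \prm{\Xs(t)}{f(s+t,\cdot)}$. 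Both routes call on Tonelli to interchange the sum over daughters with conditional expectation and track the same age-shift bookkeeping $s+b_i+(t-b_i)=s+t$, so the content is identical; your version is arguably slightly more direct, the paper's makes the super-martingale structure explicit (which it does not otherwise need here).
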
 
Lemma \ref{lem:exc} encompasses Theorem 1 in \cite{Bertoin:growth} for the case when $f$ only depends on the $x$ variable, i.e. $\ft{t}{x} \equiv f(x)$ for every $x,t\geq 0$. In that case $f$ is a so-called {\it excessive function} for $\Xs$. In the same spirit, we may refer to $f$ as a {\it time-dependent excessive function} for $\Xs$. 
\begin{proof}
The proof is an adaptation of arguments of Theorem 1 in \cite{Bertoin:growth}. 
We may assume that $\Xs$ is associated with a cell system $\Xc$ of law $\Pc_{x}$ and write $\mathcal E_{x}$ for mathematical expectation under $\Pc_{x}$. 
We will prove that the sequence
$$\Sigma(i) := \sum_{|u|\leq i, b_u \leq t} \ft{s+t}{\Xc_u (t- b_u)} + \sum_{|v|=i, b_v\leq t}\sum_{b_v \leq r\leq t} \ft{s+r}{-\Delta \Xc_v ( r-b_v)}, \quad i\in \N$$
is a non-negative super-martingale, then $\Sigma(\infty) = \lim_{i\to \infty} \Sigma(i)$ exists almost surely and $\Sigma(\infty)\geq \prm{\Xs(t)}{f(s+t, \cdot)}$. We thus deduce from Fatou's lemma that 
\begin{equation}
   \Exps[x]{\prm{\Xs(t)}{f(s+t, \cdot)}}  \leq  \Expc[x]{\Sigma(0)}
  = \Exp[x]{\ft{s+t}{X(0)} + \sum_{0\leq r\leq t} \ft{s+r}{-\Delta X(r)} } \leq  \ft{s}{x} ,
\end{equation}
where the last inequality derives from \ref{H}. 

So it remains to prove that $\Sigma(i)$ is a super-martingale.  
For every $v$ with $|v|= i$, given $\F_{i-1}:= \sigma(\Xc_u, |u|\leq i-1)$ we have by \ref{H} that 
$$\Expcondc[x]{\ft{s+t}{\Xc_v (t- b_v)} + \sum_{b_v \leq r\leq t} \ft{s+r}{-\Delta \Xc_v (r-b_v)} }{\F_{i-1}} \leq  \ft{s+b_v}{\Xc_v(0)} .$$
Summing over $v$ of $i$-th generation, we get that 
\begin{linenomath}\begin{align} 
  &  \Expcondc[x]{\sum_{|v|= i, b_v \leq t} \ft{s+t}{\Xc_v (t-b_v)} +  \sum_{|v|=i, b_v\leq t}\sum_{b_v \leq r\leq t} \ft{s+t}{-\Delta \Xc_v (r-b_v)}  }{\F_{i-1}}  \\
& \leq  \sum_{|v|= i,b_v \leq t} \ft{s+b_v}{\Xc_v(0)}  = \sum_{|u|= i-1, b_u \leq t} \sum_{b_u \leq r \leq t} \ft{s+r}{-\Delta \Xc_u(r-b_u)}.
  \end{align}\end{linenomath}
Adding $\sum_{|u|\leq i-1, b_u \leq t} \ft{s+t}{\Xc_u (t-b_u)}$ to both sides of inequality, we conclude that 
$$\Expcondc[x]{\Sigma(i)}{\F_{i-1}}\leq \Sigma(i-1), $$
which means that $\Sigma(i)$ is a super-martingale. 
\end{proof}

Let $\SM$ be the class of all multisets $\ms$ on $(0, \infty)$, which has only finitely many elements in $[a, \infty)$ for every $a>0$. Note that each $\ms \in \SM$ corresponds to a Radon measure and \eqref{eq:exc} ensures that $\MF^s \subset \SM$ for every $s\geq 0$. On account of Lemma \ref{lem:exc}, we can hence view the growth-fragmentation $\Xs$ as a stochastic process with values in $\SM$, which means that $\Xs$ does not explode. The space $\SM$ is endowed with the following topology: 
\begin{defn}\label{defn:SM} 
We denote the cardinality of a multiset $\mathcal J$ by $|\mathcal J|$. 
A sequence $(\ms_n)_{n\in \N} \in \SM$ converges to $\ms \in \SM$ if and only if
for all $r\in (0,\infty)$ such that $\ms\cap \{r\} =\emptyset$ there is $ |\ms_n\cap [r,\infty)| \to |\ms\cap [r,\infty)|$.
\end{defn}
The advantage of endowing $\SM$ with this topology is that it is a Polish space (homeomorphic to a complete and separable metric space), see Theorem 2.1 and Theorem 2.2 in \cite{Vague}. It is known from Lemma 2.1 in \cite{Vague} that convergence in $\SM$ implies vague convergence. See \cite{Vague} for more properties of $\SM$.

We next introduce a {\it truncate} operation on $\Xc$ tailored for our future purpose, which is different from the one  in the proof of Proposition \ref{prop:1}. For every $\epsilon> 0$, we obtain a truncated system $\cutc{\Xc} = (\cutc{\Xc}_u, u\in \U)$, by killing each cell process at the first time $s\geq 0$ when its size is less than or equal to $\epsilon$, together with its future (born at time $>s$) descendants. 
Specifically, let us denote for every $u\in \U$ its ancestral lineage by $\A_u:=( \A_u(t), t\geq 0)$, i.e.
$$ \A_u(t) := \sum_{n\leq |u|-1} \Xc_{{[u]}_n}(t-b_{[u]_n}) \ind{b_{[u]_n}\leq t< b_{[u]_{n+1}}} + \Xc_u(t- b_{u}) \ind{b_{u}\leq t}, \quad t\geq 0,$$ 
where ${[u]}_n$ denotes $u$'s ancestor at the $n$-th generation for all $n\leq |u|$, then we have that
   \begin{equation}\label{eq:cut}
     \cutc{\Xc}_u (t):=\begin{cases}
       \Xc_u(t), & \text{if } \inf_{0\leq r\leq t+b_u} \A_u(r)>\epsilon. \\
       \partial, & \text{otherwise}.
     \end{cases}
   \end{equation} 
 Let $\cutc{\Xs}$ be the point process on $(0,\infty)$ associated with $\cutc{\Xc}$: 
$$\cutc{\Xs}(t) = \multiset{\cutc{\Xc}_u (t-b_u): ~ u\in \U, b_u\leq t, \cutc{\Xc}_u (t-b_u)\neq \partial }, \quad t\geq 0.$$
\begin{lem}\label{lem:CV}
Suppose that $X$ satisfies \ref{H}. Then for every $x\geq 0$ and every $t\geq 0$, under the topology of $\SM$ the multiset $\cutc{\Xs}(t)$ converges $\Ps_{x}$-almost surely to $\Xs(t)$ as $\epsilon \downarrow 0+$.
\end{lem}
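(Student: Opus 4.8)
The plan is to combine the monotonicity of the $\epsilon$-truncation with the local finiteness of $\Xs(t)$ from Lemma~\ref{lem:exc}, thereby reducing the whole statement to a deterministic property of the ancestral lineages that hinges on the absence of positive jumps. I would first record two elementary facts read off from \eqref{eq:cut}. Decreasing $\epsilon$ weakens the killing condition $\inf_{0\le r\le t+b_u}\A_u(r)>\epsilon$, so a cell that contributes $\Xc_u(t-b_u)$ to $\cutc[\epsilon]{\Xs}(t)$ also contributes the same value to $\cutc[\epsilon']{\Xs}(t)$ whenever $0<\epsilon'<\epsilon$; thus $\cutc[\epsilon]{\Xs}(t)\subseteq\cutc[\epsilon']{\Xs}(t)$ as multisets. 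Moreover every element of $\cutc{\Xs}(t)$ has the form $\Xc_u(t-b_u)$ for some cell $u$ alive at time $t$, so $\cutc{\Xs}(t)\subseteq\Xs(t)$ always. Consequently, for each fixed $r>0$ the integer $|\cutc{\Xs}(t)\cap[r,\infty)|$ is nondecreasing as $\epsilon\downarrow0$ and bounded above by $|\Xs(t)\cap[r,\infty)|$, which is finite $\Ps_x$-a.s.\ since $\Xs(t)\in\MF^t\subseteq\SM$ by Lemma~\ref{lem:exc}. So the limit exists, and by Definition~\ref{defn:SM} it suffices to show it equals $|\Xs(t)\cap[r,\infty)|$.

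The heart of the proof is the claim that if a cell $u$ is alive at time $t$, then $\delta_u:=\inf_{0\le r\le t}\A_u(r)>0$. On this event, $\A_u$ restricted to $[0,t]$ is a genuine c\`adl\`ag path valued in $(0,\infty)$ — every ancestor of $u$ is alive up to the birth time of the next ancestor and $u$ itself is alive at $t$ — and it has no positive jumps: the only jumps not inherited from a cell process occur at birth times, where the lineage drops from the mother's pre-jump size to the daughter's size, and the daughter's size equals the absolute jump size, hence does not exceed the mother's pre-jump value. I would then invoke the deterministic fact that a c\`adl\`ag path on a compact interval, valued in $(0,\infty)$ and with no positive jumps, is bounded below away from $0$: if its infimum were $0$, a minimizing sequence would admit a monotone subsequence converging to some $s_*$; if it decreases to $s_*$, right-continuity forces the value $0$ at $s_*$, and if it increases to $s_*$, the left limit at $s_*$ is $0$, whence by the absence of positive jumps the value at $s_*$ is $\le0$ — either way contradicting positivity.

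To conclude, fix $r>0$ and list the finitely many cells $u_1,\dots,u_N$ alive at time $t$ with $\Xc_{u_j}(t-b_{u_j})\ge r$, so that $N=|\Xs(t)\cap[r,\infty)|$. For every $\epsilon<\min_{1\le j\le N}\delta_{u_j}$ one has $\inf_{0\le r\le t}\A_{u_j}(r)=\delta_{u_j}>\epsilon$, so by \eqref{eq:cut} $\cutc{\Xc}_{u_j}(t-b_{u_j})=\Xc_{u_j}(t-b_{u_j})$ for each $j$, giving $|\cutc{\Xs}(t)\cap[r,\infty)|\ge N$; together with the reverse inequality from the first paragraph this yields equality for all small $\epsilon$, hence in the limit. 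Since the single random input — $\Xs(t)\in\SM$, i.e.\ $\Xs(t)\cap[r,\infty)$ finite for every $r>0$ — holds on one $\Ps_x$-full-measure event, Definition~\ref{defn:SM} gives $\cutc{\Xs}(t)\to\Xs(t)$ in $\SM$, $\Ps_x$-almost surely.

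The one delicate step is the path lemma of the second paragraph: it is the only place where the hypothesis that cell processes have no positive jumps is genuinely used, and it is precisely what prevents a cell alive at time $t$ from sitting on an ancestral lineage that dips arbitrarily close to $0$ on $[0,t]$, which would make that cell survive no truncation at all. The remaining steps are routine bookkeeping with the topology of $\SM$.
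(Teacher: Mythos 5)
Your proof is correct and follows essentially the same route as the paper: the key step in both is that for a cell $u$ alive at time $t$, the ancestral lineage $\A_u$ is bounded away from $0$ on $[0,t]$, so that $u$ is included in every truncated system with $\epsilon$ small enough, and then one passes to the limit by monotonicity. The only genuine difference is that you justify the lower bound on $\A_u$ via the explicit deterministic path lemma (c\`adl\`ag, positive, no positive jumps $\Rightarrow$ bounded below away from $0$ on a compact interval), whereas the paper simply asserts it as intuitively clear; your version is slightly more detailed but not a different argument.
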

\begin{proof}
We first note that if a cell $u \in \U$ is alive at time $t\geq 0$ with $\Xc_u(t-b_u)>0$, then $\Ps_{x}$-almost surely its ancestral lineage has a size bounded away from $0$ before time $t$, i.e. $\inf_{0\leq r\leq t} A_u(r)>0$. So there exists $\epsilon>0$ small enough such that $\cutc{\Xc}_u(r-b_u) = \Xc_u(r-b_u)$ for all $b_u \leq r\leq t$, and we have $\Ps_{x}$-almost surely 
$$\lim_{\epsilon \to 0+}\cutc{\Xc}_u(t-b_u)\ind{t\geq b_u} = \Xc_u(t-b_u)\ind{t\geq b_u}. $$
We hence obtain by the monotone convergence that for every $a>0$, $\Ps_{x}$-almost surely 
\begin{align} 
    \lim_{\epsilon \to 0}|\cutc{\Xs}(t)\cap [a,\infty)| &= \lim_{\epsilon \to 0} \sum_{u\in \U}\ind{\cutc{\Xc}_u(t -b_u)\geq a}\cutc{\Xc}_u(t -b_u)\ind{t\geq b_u}\\
&=  \sum_{u\in \U}\lim_{\epsilon \to 0} \ind{\cutc{\Xc}_u(t -b_u)\geq a}\cutc{\Xc}_u(t -b_u)\ind{t\geq b_u} = |\Xs(t) \cap [a,\infty)|,
  \end{align}
which means that $\cutc{\Xs}(t)$ converges $\Ps_{x}$-almost surely to $\Xs(t)$ in $\SM$.
\end{proof}

We observe that the truncated system $\cutc{\Xc}$ has discrete temporal branching structure, since for each c\`adl\`ag process the set of jump times with sizes of jumps $<-\epsilon$ is discrete.
By the same arguments as the proof of Proposition 2 in \cite{Bertoin:growth}, we deduce from this observation and Lemma \ref{lem:CV} that $\Xs$ has the temporal branching property. 
To describe this property, let us define a family $(\rho_{s,t}, t\geq s \geq 0)$, where each $\rho_{s,t}$ is a probability kernel from $\MF^s$ to $\MF^t$, in the following way. 
Given a multiset $\mathcal J\in \MF^s$, we may construct a family of independent random multisets $(\ms_y, y\in \mathcal J)$, such that each $\ms_y$ has the law of $\Xs(t-s)$ under $\Pc_{y}$. Then $\mathcal J^t:= \biguplus_{y\in \mathcal J} \ms_y\in \MF^t$, since it follows from Lemma \ref{lem:exc} that
$$ \prm{\mathcal J^t}{\ft{t}{\cdot}} = \sum_{y\in \mathcal J} \Exps[y]{\prm{\Xs(t-s)}{\ft{t}{\cdot} } } 
\leq \sum_{y\in \mathcal J}\ft{s}{y} = \prm{\mathcal J}{ \ft{s}{\cdot}} < \infty.$$
We hence define $\rho_{s,t}(\mathcal J,\cdot)$ by the law of $\mathcal J^t$. 

\begin{prop}[Temporal branching property]\label{prop:TBP}
  Suppose that $X$ possesses a function $f$ that satisfies \ref{H}, then for every $t \geq  s\geq 0$ and every $x>0$, the conditional distribution of $\Xs(t)$ under $\Ps_{x}$ given $( \Xs(r), 0\leq r\leq s)$ is $\rho_{s,t}(\Xs(s), \cdot)$.
\end{prop}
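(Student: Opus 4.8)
The plan is to reduce the claim to the already-established facts that (a) the truncated system $\cutc{\Xc}$ has a discrete temporal branching structure, and (b) $\cutc{\Xs}(t)$ converges $\Ps_x$-almost surely to $\Xs(t)$ in $\SM$ as $\epsilon\downarrow 0$ (Lemma \ref{lem:CV}), following the scheme of Proposition 2 in \cite{Bertoin:growth}. First I would fix $t\geq s\geq 0$, $x>0$ and establish the branching property for each truncated system $\cutc{\Xs}$: because for a c\`adl\`ag cell process the jump times with jump size $<-\epsilon$ form a locally finite set, each cell in $\cutc{\Xc}$ survives only finitely many generations before either dying at a size $\leq\epsilon$ or producing finitely many daughters of size $>\epsilon$ each born at a jump exceeding $\epsilon$ in magnitude; consequently the genealogical tree of $\cutc{\Xc}$ has, up to any finite time, only finitely many vertices that are ever alive. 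For such a discrete branching structure the Markov (Feller) property of $X$ at the jump times, together with the conditional-independence construction of the cell system, gives directly that conditionally on $(\cutc{\Xs}(r),0\leq r\leq s)$ the future $\cutc{\Xs}(t)$ is obtained by letting each cell $y\in\cutc{\Xs}(s)$ independently grow as a truncated growth-fragmentation started from $y$ over time $t-s$ — this is the usual branching-property argument for Crump--Mode--Jagers-type systems and requires only bookkeeping, not new ideas.

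Next I would pass to the limit $\epsilon\downarrow 0$. On the one hand, by Lemma \ref{lem:CV}, $\cutc{\Xs}(r)\to\Xs(r)$ $\Ps_x$-a.s.\ in $\SM$ simultaneously for a countable dense set of times $r$, hence (using right-continuity) the conditioning $\sigma$-fields $\sigma(\cutc{\Xs}(r),0\leq r\leq s)$ converge, in the appropriate sense, to $\sigma(\Xs(r),0\leq r\leq s)$, and $\cutc{\Xs}(t)\to\Xs(t)$. On the other hand, I need the "offspring" kernels to converge: the key point is that $\rho^{[\epsilon]}_{s,t}(\cutc{\Xs}(s),\cdot)$, which disperses each $y\in\cutc{\Xs}(s)$ according to the law of the truncated growth-fragmentation $\cutc{\Xs}(t-s)$ under $\Pc_y$, converges as $\epsilon\downarrow 0$ to $\rho_{s,t}(\Xs(s),\cdot)$. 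This uses two inputs already available: that for fixed $y$ the law of $\cutc{\Xs}(t-s)$ under $\Pc_y$ converges to that of $\Xs(t-s)$ under $\Pc_y$ (Lemma \ref{lem:CV} again, applied to the single-cell system, which dominates and hence controls each summand), and that the total mass bound $\prm{\mathcal J^t}{\ft{t}{\cdot}}\leq\prm{\mathcal J}{\ft{s}{\cdot}}<\infty$ from Lemma \ref{lem:exc} gives the uniform integrability/tightness needed to upgrade convergence of each cell's contribution to convergence of the superposition in $\SM$. Combining, the identity "$\Law(\cutc{\Xs}(t)\mid \mathcal F^{[\epsilon]}_s)=\rho^{[\epsilon]}_{s,t}(\cutc{\Xs}(s),\cdot)$" passes to the limit and yields "$\Law(\Xs(t)\mid\mathcal F_s)=\rho_{s,t}(\Xs(s),\cdot)$", which is the assertion.

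The main obstacle is the limiting step, specifically making rigorous the convergence of the conditional laws: one must check that testing against a suitable convergence-determining class of bounded continuous functions on $\SM$ (for instance exponential functionals $\ms\mapsto\exp(-\prm{\ms}{h})$ with $h$ continuous, compactly supported, bounded by a multiple of $\ft{t}{\cdot}$) commutes with the limit $\epsilon\downarrow 0$ on both the conditioned side and the kernel side, and that the $\sigma$-fields $\sigma(\cutc{\Xs}(r),r\leq s)$ generate $\sigma(\Xs(r),r\leq s)$ in the limit so that the conditional expectation identity is preserved. Here the excessive-function estimate of Lemma \ref{lem:exc} is what saves the argument, since it provides the domination that turns almost-sure convergence of cell-wise contributions into convergence of the random multisets and their conditional laws; without it the superposition could fail to converge in $\SM$. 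Everything else — the discrete branching property of $\cutc{\Xs}$ and the explicit form of $\rho^{[\epsilon]}_{s,t}$ — is routine and parallels \cite{Bertoin:growth} verbatim.
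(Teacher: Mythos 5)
Your proposal matches the paper's approach: the paper proves Proposition~\ref{prop:TBP} by observing that the truncated system $\cutc{\Xc}$ has a discrete temporal branching structure (so the branching property holds for each $\cutc{\Xs}$ by the argument of Proposition~2 in \cite{Bertoin:growth}) and then passing to the limit $\epsilon\downarrow 0$ via Lemma~\ref{lem:CV}. Your elaboration of the limiting step is consistent with, and in fact more explicit than, the paper, which simply cites \cite{Bertoin:growth} for the details.
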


\begin{rem}
  One may easily extend the analysis in this section to time-inhomogeneous Markov processes. 
Let $X$ is a time-inhomogeneous cell process and write $\muA_{s,x}$ for the law of $X$ starting at time $s\geq 0$ with initial size $x\geq 0$. Then the counterpart of condition \ref{H} is that there exists a function $f$ that satisfies \eqref{eq:exc} and for every $x>0$ and every $s\geq 0$, 
\begin{equation}
  \Exp[s,x]{\ft{t}{X(t)} + \sum_{s\leq r\leq t} \ft{r}{-\Delta X(r)} } \leq \ft{s}{x}, \quad\text{for all } t\geq s,
\end{equation}
where $E_{s,x}$ means mathematical expectation under $\muA_{s,x}$. 
Under this condition, one may easily build a cell system driven by $X$ (with the life path of each $\Xc_u$ scaled by the universal time) and check that the system does not explode by an analogue of Lemma \ref{lem:exc}. Details shall be left to interested readers. 
\end{rem}


\subsection{Bifurcators}\label{sec:bifurcator}
For every $x>0$, let $P_x$ and $Q_x$ be respectively the laws of two cell processes $X$ and $Y$, both starting from $x$. 
We now give a formal definition of bifurcators of cell processes, which extends both Definition 2 by Pitman and Winkel \cite{PitmanWinkel} and the present Definition \ref{defn:bifH} for homogeneous cell processes. 
\begin{defn}\label{defn:bif}
A bivariate process $(X',Y')$ is called a {\bf bifurcator} of branches $X$ and $Y$, if it satisfies the following properties:
  \begin{enumerate}[label=(\roman*)]
 \item  For every $x> 0$, write $\Pb_{x}$ for the joint distribution of $(X',Y')$ with $X'(0)=Y'(0)=x$.  Under $\Pb_{x}$, each component $X'$ and $\tildeX'$ has the law $P_{x}$ and $\tildeP_{x}$ respectively, that is, the two marginal distributions of $\Pb_{x}$ are $P_{x}$ and $\tildeP_{x}$. 
\item Let $\tb:= \inf \{t\geq 0: X'(t) \neq \tildeX' (t) \}$. For every $x> 0$, conditionally on $\{\tb<\infty\}$, there is 
  \begin{equation}
  X'(\tb)+ \tildeX'(\tb) = X'(\tb-)= \tildeX'(\tb-), \quad \Pb_{x}-a.s.\label{eq:bif}
  \end{equation}
\item  (Asymmetric Markov branching property)
For every $x> 0$, the process $(X'(t), \tildeX'(t), \ind{\tb>t})_{t\geq 0}$ under $\Pb_{x}$ is Markovian. Specifically, conditionally given $\tb>t$, the process $(X'(r), \tildeX'(r))_{r\geq t}$ has distribution $\Pb_{X'(t)}$; conditionally given $\tb\leq t$, $(X'(r), \tildeX'(r))_{r\geq t} $ is a pair of independent processes of respective laws $P_{ X'(t)}$ and $\tildeP_{\tildeX'(t)}$. 
\end{enumerate}
If such a bifurcator $(X',Y')$ exists, then we say {\bf $X$ and $\tildeX$ can be coupled to form a bifurcator}.
\end{defn}

\begin{rem} 
We know from \eqref{eq:bif} that if $\tb < \infty$, then \eqref{eq:bif} implies that $\tb$ is a jump time of both $X'$ and $Y'$, which is almost surely strictly positive and strictly smaller than the lifetimes of $X'$ and $Y'$. Define a filtration $(\Fb_t)_{ t\geq 0}$ by the usual augmentation of $\sigma(X'(r), \tildeX' (r), ~0\leq r\leq t)$, note that $\tb$ is a $(\Fb_t)$-stopping time and each component $X'$ or $\tildeX'$ satisfies the strong Markov property. 
\end{rem}


We next state a sufficient condition for growth-fragmentations based on different cell processes to have the same distribution, which is the main purpose of this work. Suppose that \ref{H} holds for both $X$ and $Y$, then we know from the precedent subsection that we can construct two non-exploded growth-fragmentations $\Xs$ and $\Ys$ associated with $X$ and $Y$ respectively.  
Note that \ref{H} entails that for every $x>0$ and every $s\geq 0$,  
$$   \Exp[x]{ \sum_{r\geq 0 } f(s+r, -\Delta X(r)) } \leq f(s,x).$$
However, we shall need the stronger assumption:
  \begin{enumerate}[label={\bf [H$\mathbf{\eta}$]}, ref={\bf [H$\mathbf{\eta}$]},leftmargin=3.0em]
\item \label{Heta} 
there exist a function $\gt$ that satisfies \eqref{eq:exc} and a constant $\eta <1$, such that for every $x>0$ and every $s\geq 0$,  
$$   \Exp[x]{ \sum_{r\geq 0 } \gt(s+r, -\Delta X(r)) } \leq \eta \gt(s,x).$$
\end{enumerate}

\begin{thm}\label{thm:1}
Let $X$ and $\tildeX$ be two cell processes that both satisfy \ref{H} and \ref{Heta}. Suppose that $X$ and $\tildeX$ can be coupled to form a bifurcator, then for every $x>0$, two Markovian growth-fragmentations $\Xs$ and $\Ys$ driven respectively by $X$ and $Y$, both starting from $x$, have the same finite-dimensional distribution.  
\end{thm}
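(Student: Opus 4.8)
The plan is to prove equality of finite-dimensional distributions by induction on the number of time points, reducing to a one-time-point statement via the temporal branching property (Proposition \ref{prop:TBP}), and proving that one-time-point statement by a truncation-and-coupling argument. The key point is that, because of Proposition \ref{prop:TBP}, if we can show $\Xs(t) \overset{d}{=} \Ys(t)$ for every fixed $t\geq 0$ (as elements of $\SM$), then since both processes evolve forward according to the \emph{same} kernels $\rho_{s,t}$ built out of the one-dimensional marginals — and we will have shown those marginals agree at every time — the full finite-dimensional distributions coincide. So the crux is the equality in law of $\Xs(t)$ and $\Ys(t)$ for each fixed $t$.

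To prove $\Xs(t)\overset{d}{=}\Ys(t)$, I would use the bifurcator coupling recursively to build the two cell systems on a common probability space. Start from the coupled pair $(X',Y')$ under $\Pb_x$: the two ancestor cells agree up to the bifurcation time $\tb$, and at $\tb$ (if finite) a daughter of size $X'(\tb-)-X'(\tb) = X'(\tb-)-Y'(\tb-) + \cdots$ — more precisely, by \eqref{eq:bif}, the mother in the $X$-system jumps to $X'(\tb)$ emitting a daughter of size $Y'(\tb)$, while in the $Y$-system the mother jumps to $Y'(\tb)$ emitting a daughter of size $X'(\tb)$; so the \emph{unordered pair} (mother-after-jump, daughter) is the same multiset $\{X'(\tb),Y'(\tb)\}$ in both systems, and after $\tb$ the two mothers evolve as independent copies of $X$ and $Y$ started from $X'(\tb)$ and $Y'(\tb)$ respectively. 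Hence, up to swapping the roles of ``mother'' and ``this particular daughter'', the collection of cells produced in the $X$-system and in the $Y$-system from this first bifurcation event is the same multiset of independent cell processes — one copy of $X$ from $X'(\tb)$, one copy of $Y$ from $Y'(\tb)$ — together with the common pre-$\tb$ daughters, each of which itself spawns a cell process that we couple recursively using the bifurcator. Since a growth-fragmentation does not depend on how cells are labelled (the two Remarks after the construction of $\Xc$), this swap is harmless: it produces a coupling under which $\Xc$ (driven by $X$) and $\Yc$ (driven by $Y$) agree, cell by cell after relabelling, \emph{up to generation $n$ and up to a truncation level}, for every $n$.

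To make this rigorous one must control the recursion. I would introduce the truncation $\cutc{\Xs}$ of Section \ref{sec:gf}: killing each lineage when its size first drops below $\epsilon$. For fixed $\epsilon>0$ the truncated system has only finitely many cells alive up to time $t$ that are not already suppressed, but the bifurcation times $\tb$ need not be bounded below, so one still has countably many bifurcation events; here hypothesis \ref{Heta} enters. The role of \ref{Heta}, with its contraction constant $\eta<1$, is exactly to give a summable bound on the total $g$-mass generated across generations, so that the recursively-defined coupling of the two cell systems converges (the ``residual'' mass past generation $n$ tends to $0$ in expectation geometrically), and in the limit one gets a genuine coupling of $\Xc$ and $\Yc$ under which the associated multiset processes agree. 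Concretely, iterating \ref{Heta} over generations as in the proof of Lemma \ref{lem:exc} bounds $\Exps[x][\text{mass of cells of generation }\geq n\text{ alive by time }t]$ by $\eta^n g(0,x)$ or similar, which lets one pass to the limit and invoke Lemma \ref{lem:CV} to recover $\Xs(t)$ from $\cutc{\Xs}(t)$ in $\SM$. Combining the coupling at each truncation level with the $\SM$-convergence $\cutc{\Xs}(t)\to\Xs(t)$ and $\cutc{\Ys}(t)\to\Ys(t)$ yields $\Xs(t)\overset{d}{=}\Ys(t)$.

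\textbf{Main obstacle.} The delicate step is the bookkeeping of the recursive coupling: one is gluing together countably many bifurcator couplings — one at each bifurcation event in each lineage — and must check that the resulting object is well-defined, that the relabelling genuinely identifies the $X$-cell-system with the $Y$-cell-system as multiset-valued processes (using the label-independence remarks), and that the ``tail'' contributed by deep generations is negligible. The quantitative input for this negligibility is precisely \ref{Heta}; without the strict contraction $\eta<1$ one would only get the martingale bound of \ref{H}, which controls each fixed generation but not the sum over all generations, so the limiting coupling could fail to capture all the mass. I expect setting up this limit carefully — rather than any single estimate — to be where the real work lies.
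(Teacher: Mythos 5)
Your core strategy is the paper's: truncate at level $\epsilon$, build a recursive coupling of the truncated systems out of bifurcators, use \ref{Heta} to control the recursion, and let $\epsilon\downarrow 0$ via Lemma \ref{lem:CV}. But there are two points worth flagging.

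First, your opening reduction to one-time marginals via Proposition \ref{prop:TBP} is not needed. The paper's argument already delivers an equality \emph{in law of processes} at each truncation level: it builds a binary-tree-indexed object $\Ws_{(X,Y)}$ (each node of $\Utwo$ carries a bifurcator, killed below $\epsilon$ and branching either at the bifurcation time or at the first jump of size $>\epsilon$), proves $\Ws_{(X,Y)}\overset{d}{=}\Ws_{(Y,X)}$ by an explicit label-swapping bijection $h:\Utwo\to\Utwo$ that flips children at marked nodes (Lemma \ref{lem:WW}), and separately proves $\Ws_{(X,Y)}\overset{d}{=}\cutc{\Xs}$ by concatenating the leftmost branch $\Rm$ into the ancestor cell, then recursing on the subtrees hanging off right-children (Lemma \ref{lem:WX}). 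Chaining these gives $\cutc{\Xs}\overset{d}{=}\cutc{\Ys}$ as processes; Lemma \ref{lem:CV} then gives finite-dimensional convergence directly, so the temporal-branching induction is superfluous. Your ``unordered pair is the same multiset'' observation is exactly the content of Lemma \ref{lem:WW}, but the paper's explicit $\Utwo$-indexing makes the symmetry manifest and avoids the ad hoc relabelling bookkeeping you flag as the main obstacle.

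Second, your account of what \ref{Heta} does is off target. It is not about geometric decay of mass across generations of the cell tree $\U$ — that kind of bound comes from \ref{H} alone (Lemma \ref{lem:exc}) and is not the issue. The danger is within a \emph{single lineage}: the bifurcation times $\tb_v$ and the $>\epsilon$ jump times $\tilde{T}^X_v$ along the leftmost branch of $\Utwo$ could a priori accumulate at a finite time, in which case the concatenated process $\A^X_{\Rm}$ would be killed spuriously there and would \emph{not} have law $P_x$, breaking Lemma \ref{lem:WX}. Lemma \ref{lem:Heta} rules this out, and it needs the strict contraction $\eta<1$ in a genuinely different way from what you describe: the proof splits into the case of infinitely many marked (bifurcation) events, where $\eta<1$ builds a supermartingale $M_i := \eta_Y^{-i}\sum_r g_Y(\beta_{\vl^{n_i}}+r, -\Delta Y_{\vl^{n_i}}(r))$ along the $Y$-side of the lineage whose convergence forces the daughter sizes at marked times to vanish (contradiction, since nothing is killed), and the case of infinitely many non-marked ($>\epsilon$ jump) events, where \eqref{eq:exc} plus the bound from \ref{Heta} on a single full $X$-trajectory gives a direct contradiction. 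So \ref{Heta} is a lineage-level non-accumulation condition, not a tail-mass bound — and you cannot substitute the weaker \ref{H} here even though \ref{H} does control the total cell-system mass.
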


\subsection{Proof of Theorem \ref{thm:1}}
Let us briefly explain the idea of the proof. 
Fix $x>0$, let $\Xc$ and $\Yc$ be cell systems associated with $X$ and $Y$ respectively, with respective laws $\Pc_x$ and $\tildePc_x$. 
For every $\epsilon>0$, let $\cutc{\Xs}$ be the process associated with the truncated cell system $\cutc{\Xc}$ derived from $\Xc$ as in \eqref{eq:cut}, by killing each cell together with its future descent when its size becomes less than or equal to $\epsilon$. Similarly we define $\cutc{\Ys}$. 
We shall prove for every $\epsilon>0$ that $\cutc{\Xs}$ under $\Pc_x$ has the same law as $\cutc{\Ys}$ under $\tildePc_x$. Then letting $\epsilon \to 0+$, we conclude from Lemma \ref{lem:CV} that $\Xs$ and $\Ys$ have the same finite-dimensional distribution.

Let us fix an arbitrary $\epsilon >0$. To prepare for the proof that $\cutc{\Xs}$ and $\cutc{\Ys}$ have the same law, we construct a family of bivariate processes $((X_v,Y_v),v\in \Utwo)$ (recall that $\Utwo= \bigcup_{n\in \N}\left\{ \vl,\vr \right\}^n$ is the binary tree) in the following way. Since $X$ an $Y$ can be coupled to form a bifurcator, there exists a bifurcator with distribution $(\Pb_y, y>0)$, whose marginal distributions are $P_y$ and $Q_y$ under $\Pb_y$. Then we let $(X_{\emptyset}, Y_{\emptyset})$ be a bifurcator with law $\Pb_{x}$ and write $\beta_{\emptyset}:= 0$ for the birth time of $\emptyset$.  
Suppose by induction that we have built for a certain $v\in \Utwo$ a bifurcator $( X_v, Y_v)$ with birth time $\beta_v$. Write $\tb_v:= \inf \{ t\geq 0: X_{v}(t) \neq \tildeX_{v} (t) \}$ for the switching time of this bifurcator, $T^{X}_v:= \inf\left\{ t\geq 0 :~ X_v(t) \leq \epsilon\right\}$ for the first time when $X_v$ is smaller than $\epsilon$, and $\tilde{T}^{X}_v:= \inf\left\{ t\geq 0 :~ -\Delta X_v(t) > \epsilon\right\}$ for the first time when $X_v$ has a jump of size greater than $\epsilon$, then we define the lifetime of $v$ by 
$$\dtb_v:= \tb_v\wedge T^{X}_v\wedge \tilde{T}^{X}_v,$$
then $\dtb_v$ is a $(\Fb^{v}_t)$-stopping time, where $\Fb^{v}_t$ is the augmentation of $\sigma((X_{v}(r), \tildeX_{v} (r)), ~0\leq r\leq t)$. 
At the lifetime $\dtb_v$, we distinguish the following two situations.
\begin{itemize}
 \item If $\dtb_v =T^{X}_v< \tilde{T}^{X}_v\wedge \tb_v$ or $\dtb_v = \infty$, then $v$ is {\bf killed} at its lifetime $\dtb_v$. Further, we agree that for every $w\in \Utwo\setminus \{\emptyset\}$, $vw$ is also killed, with $\beta_{vw}=\infty$, $\dtb_{vw} = 0$ and $X_{vw} \equiv Y_{vw}\equiv \partial$. 
As $\tb_v\wedge \tilde{T}^{X}_v$ is almost surely strictly positive, this situation also covers the case when $\dtb_v=0$ (if and  only if $T^{X}_v=0$, i.e. $X_v(0) \leq \epsilon$).
\item Otherwise, $v$ {\bf branches} at its lifetime $\dtb_v$, giving birth to two independent bifurcators $(X_{v\vl},Y_{v\vl})$ and $(X_{v\vr},Y_{v\vr})$ with respective distributions $\Pb_{a_{v\vl}}$ and $\Pb_{a_{v\vr}}$, where 
  \begin{equation}\label{eq:XY0}
    (a_{v\vl},a_{v\vr} ):= (X_v(\dtb_v), -\Delta X_v(\dtb_v)).
  \end{equation}
 Set their birth time by $\beta_{v\vl} =\beta_{v\vr}:= \beta_v +\dtb_v$.
We further {\bf mark} $v$ if  $\dtb_v =\tb_v\leq T^{X}_v \wedge \tilde{T}^{X}_v$ (we also say that we mark the branching event at the death of $v$), so $v$ is {\bf non-marked} if $\dtb_v =\tilde{T}^{X}_v< \tb_v$. 
Using the junction relation \eqref{eq:bif} of the bifurcator, we also have
\begin{equation}
 (a_{v\vl},a_{v\vr} )=\begin{cases}
    (-\Delta Y_v(\dtb_v), Y_v(\dtb_v)), & \text{if } v \text{ is marked}, \\
     (Y_v(\dtb_v), -\Delta Y_v(\dtb_v)), & \text{if } v \text{ is non-marked}.
  \end{cases}\label{eq:mark}
\end{equation} 
Note that if $v$ is non-marked, then $a_{v\vr}> \epsilon$ always holds; but if $v$ is marked, then it is possible that $a_{v\vr}\leq \epsilon$, which means that $v\vr$ is immediately killed and $\dtb_{v\vr} = 0$. 
In both marked and non-marked cases, it is possible that $a_{v\vl}\leq \epsilon$ and $v\vl$ is immediately killed with $\dtb_{v\vl} = 0$.
\end{itemize}

We continue so on and so forth to construct all generations of the family $((X_v,Y_v),v\in \Utwo)$ and finally define a process
$$\Ws_{(X,Y)}(t):= \multiset{X_v(t-\beta_v)~:~ v\in \Utwo, \beta_v \leq t < \beta_v+ \dtb_v}, \qquad t\geq 0.$$
Note by construction that every element of $\Ws_{(X,Y)}(t)$ is larger than $\epsilon$.

A notable feature of this system is that, roughly speaking, $\Ws_{(X,Y)}$ is symmetric, i.e. its law is invariant under the permutation of labels $X$ and $Y$.

\begin{lem}\label{lem:WW}
  $\Ws_{(X,Y)}$ has the same law as $\Ws_{(Y,X)}$. 
\end{lem}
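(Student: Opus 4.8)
The plan is to show that the joint law of the entire family $((X_v, Y_v), v \in \Utwo)$, together with the marking, is invariant under simultaneously swapping the roles of the two coordinates $X$ and $Y$ in every bifurcator and swapping the left/right child labels appropriately. The process $\Ws_{(X,Y)}$ is built from the $X$-coordinates alone, so once this symmetry of the generating family is established, $\Ws_{(X,Y)} \eqdis \Ws_{(Y,X)}$ follows by reading off the $Y$-coordinates instead. First I would record the one local symmetry that makes everything work: for a single bifurcator $(X', Y')$ with law $\Pb_x$, the pair $(Y', X')$ is again a bifurcator of branches $Y$ and $X$ with the same switching time $\tb$ and the same junction identity \eqref{eq:bif} (this is immediate from Definition \ref{defn:bif}, whose conditions (i)--(iii) are symmetric in the two coordinates once one also swaps the roles of $P$ and $Q$). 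Moreover, at the branching/killing time $\dtb_v = \tb_v \wedge T^X_v \wedge \tilde T^X_v$, the relation \eqref{eq:mark} is precisely the statement that swapping $X \leftrightarrow Y$ in the bifurcator interchanges the roles of $a_{v\vl}$ and $a_{v\vr}$ on marked branches (where $\dtb_v = \tb_v$) while leaving the ``$X$-side'' description unchanged on non-marked branches.

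Next I would set up the recursion carefully. Define, for each $v \in \Utwo$, the ``swapped'' family obtained by replacing $(X_v, Y_v)$ by $(Y_v, X_v)$; the point is to check that this swapped family has the same distribution as a family generated by the original construction but with the roles of $X$ and $Y$ exchanged throughout (so generated from the bifurcators with reversed coordinates). I would argue by induction on the generation $|v|$. At the root, $(X_\emptyset, Y_\emptyset) \sim \Pb_x$ and $(Y_\emptyset, X_\emptyset)$ is, by the local symmetry above, distributed as a bifurcator of $Y$ and $X$ started from $x$. For the induction step, condition on the bifurcator at node $v$ up to its lifetime $\dtb_v$; the killing/branching decision and the marking depend only on the pair $(\tb_v, T^X_v, \tilde T^X_v)$ and on the $X$-coordinate's behaviour, but after swapping, $T^X_v$ becomes $T^Y_v$ and $\tilde T^X_v$ becomes $\tilde T^Y_v$, so one must use the junction identity to see that on a marked branch the event $\{X_v(\dtb_v) \le \epsilon\}$ becomes $\{-\Delta Y_v(\dtb_v) \le \epsilon\}$, i.e. $\{a_{v\vr} \le \epsilon\}$, which is exactly the rule ``$v\vr$ may be immediately killed only if $v$ is marked'' read from the $Y$-side. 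The pair of offspring sizes $(a_{v\vl}, a_{v\vr})$ given by \eqref{eq:XY0} transforms via \eqref{eq:mark}, and the offspring bifurcators, being conditionally independent and started from these sizes, inherit the symmetry by the induction hypothesis.

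Having established that the swapped family has the law of the construction run with $X$ and $Y$ interchanged, I would then observe that $\Ws_{(Y,X)}(t) = \multiset{Y_v(t - \beta_v) : v \in \Utwo,\ \beta_v \le t < \beta_v + \dtb_v}$ is, by definition, the $X$-coordinate process of the construction run with the roles of $X$ and $Y$ interchanged — which by the previous paragraph has the same law as the $X$-coordinate process $\Ws_{(X,Y)}$ of the original construction. This yields $\Ws_{(X,Y)} \eqdis \Ws_{(Y,X)}$.

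The main obstacle I anticipate is bookkeeping, not substance: one must be scrupulous that the \emph{entire} dependency structure of the construction — namely which coordinate drives the killing times $T^X_v, \tilde T^X_v$, which child gets which size, and the marking rule — is consistently reflected under the swap, and in particular that the asymmetry ``$a_{v\vr} > \epsilon$ automatically on non-marked branches, possibly $a_{v\vr} \le \epsilon$ on marked branches'' is not a genuine asymmetry but is forced by \eqref{eq:mark} together with $\tilde T^X_v$ being defined by jumps of size $> \epsilon$. Once the local identity \eqref{eq:mark} is used correctly, the induction goes through; the delicate point is simply to phrase the induction hypothesis strongly enough (symmetry of the whole decorated tree, not just of $\Ws$) that it is preserved at each branching event.
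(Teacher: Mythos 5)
Your overall plan — swap the two coordinates of every bifurcator, relabel left/right children at marked nodes, use \eqref{eq:mark} to check that the relabelled swapped family has the dynamics of the construction driven by $(Y,X)$, and read off the appropriate coordinate — is the same approach as the paper, and the local analysis of lifetimes and marking you sketch is sound. The paper implements your ``swap left/right labels appropriately'' literally via a random bijection $h:\Utwo\to\Utwo$ (at each marked node the two children are exchanged) rather than by an induction on the generation, but that is a presentational difference.

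There is, however, a concrete gap in your final paragraph. What the swap/relabel step gives you is that the multiset of \emph{first} coordinates of the relabelled $(X,Y)$-family, namely $\multiset{Y_{h(v)}(t-\beta_{h(v)}):\beta_{h(v)}\le t<\beta_{h(v)}+\dtb_{h(v)}}$, has the same law as $\Ws_{(Y,X)}$; and since $h$ is a bijection this multiset equals $\multiset{Y_v(t-\beta_v):\beta_v\le t<\beta_v+\dtb_v}$. But that last object is the $Y$-coordinate reading of the original family, not $\Ws_{(X,Y)}=\multiset{X_v(t-\beta_v):\beta_v\le t<\beta_v+\dtb_v}$, which is the $X$-coordinate reading. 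Your argument concludes as though these were interchangeable without saying why. The missing step is the observation that $\dtb_v\le\tb_v$ by construction, so by Definition~\ref{defn:bif}(ii) every cell satisfies $X_v(r)=Y_v(r)$ for all $r<\dtb_v$; hence the $X$-reading and the $Y$-reading of the same family coincide \emph{pathwise}, i.e.\ $\multiset{X_v(t-\beta_v):\beta_v\le t<\beta_v+\dtb_v}=\multiset{Y_v(t-\beta_v):\beta_v\le t<\beta_v+\dtb_v}$. This is precisely the identity the paper invokes at the end of its proof, and it is the real reason $\Ws_{(X,Y)}$ can only ``see'' the part of each bifurcator on which the two marginals agree. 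Without it the argument never links $\Ws_{(Y,X)}$ back to $\Ws_{(X,Y)}$.
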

\begin{proof}
Given the family $((X_v,Y_v),v\in \Utwo)$ constructed as above, let us define recursively a bijection $h:\Utwo \to \Utwo$ with $h(\emptyset):= \emptyset$, such that for every $v\in \Utwo$ we have $(h(v\vl), h(v\vr)) := (h(v)\vr,h(v)\vl)$ if $h(v)$ is marked, and $(h(v\vl), h(v\vr)) := (h(v)\vl,h(v)\vr)$ if $v$ is non-marked or $v$ is killed.
We next describe the dynamics of $((Y'_{v}, X'_{v}), v\in \Utwo):= ((Y_{h(v)}, X_{h(v)}), v\in \Utwo)$ as a bivariate system generated by the bifurcator $(Y,X)$. Specifically, define $T^{Y}_v$ and $\tilde{T}^{Y}_v$ for $Y_v$ in the same way as $T^{X}_v$ and $\tilde{T}^{X}_v$, then the lifetime of each $(Y'_{v}, X'_{v})$ is $\dtb'_{v}:= \tb_{h(v)}\wedge T^{Y}_{h(v)}\wedge \tilde{T}^{Y}_{h(v)}$, which is equal to $\dtb_{h(v)}$. Indeed, since $X_{h(v)}(t) = Y_{h(v)}(t)$ for all $t< \tb_{h(v)}$, we find that
\begin{itemize}
 \item If $\dtb_{h(v)} =T^{X}_{h(v)}< \tilde{T}^{X}_{h(v)}\wedge \tb_{h(v)}$ or $\dtb_{h(v)} = \infty$, then $T^{Y}_{h(v)}= T^{X}_{h(v)}$ and $T^{Y}_{h(v)}< \tilde{T}^{Y}_{h(v)}\wedge \tb_{h(v)}$;
\item If $\dtb_{h(v)} =\tb_{h(v)}\leq T^{X}_{h(v)} \wedge \tilde{T}^{X}_{h(v)}$, then $\tb_{h(v)}\leq T^{Y}_{h(v)} \wedge \tilde{T}^{Y}_{h(v)}$;
\item if $\dtb_{h(v)} =\tilde{T}^{X}_{h(v)}< \tb_{h(v)}$, then $\tilde{T}^{Y}_{h(v)}=\tilde{T}^{X}_{h(v)}< \tb_{h(v)}$ and $\tilde{T}^{Y}_{h(v)}\leq T^{Y}_{h(v)}$. 
\end{itemize}
At the lifetime $\dtb_{h(v)}$, $v$ is killed in the first case; in the other two cases, $v$ generates two independent bifurcators $(Y'_{v\vl}, X'_{v\vl})$ and $(Y'_{v\vr}, X'_{v\vr})$ of respective laws $\Pb_{a_{h(v\vl)}}$ and $\Pb_{a_{h(v\vr)}}$.
 It follows from \eqref{eq:mark} and the construction of $h$ that for every $v\in \Utwo$
$$(a_{h(v\vl)},a_{h(v\vr)} ) = (Y_{h(v)}(\dtb_{h(v)}), -\Delta Y_{h(v)}(\dtb_{h(v)})) .$$  
We hence conclude that the process
$$\Ws'(t):=\multiset{Y'_{v}(t-\beta'_{v}):~ v\in \Utwo, \beta'_{v} \leq t < \beta'_{v}+ \dtb'_{v}}= \multiset{Y_{h(v)}(t-\beta_{h(v)}):~ v\in \Utwo, \beta_{h(v)} \leq t < \beta_{h(v)}+ \dtb_{h(v)}},$$
is a copy of $\Ws_{(Y,X)}$. On the other hand, since $h$ is a bijection and recall that for every $v\in \Utwo$, $X_v(t) = Y_v(t)$ for all $t< \dtb_v$, then clearly $\Ws'= \Ws_{(X,Y)}$.
\end{proof}

We next consider the process associated with the left-most branch $\Rm=\{\emptyset, \vl, \vl\vl, \ldots\}$, that is
\begin{equation}\label{eq:A}
  \A_{\Rm}(t):= \sum_{n\geq 0} \ind{\beta_{\vl^n}\leq  t< \beta_{\vl^n}+ \dtb_{\vl^n}} X_{\vl^n}(t - \beta_{\vl^n})= \sum_{n\geq 0} \ind{\beta_{\vl^n}\leq  t< \beta_{\vl^n}+ \dtb_{\vl^n}} Y_{\vl^n}(t - \beta_{\vl^n}) ,\quad t\geq 0. 
\end{equation}
where $\vl^n:= (\vl\vl\ldots \vl)\in \{\vl,\vr\}^n$ and $\vl^0:=\emptyset$. 
Let $N:=\inf \left\{ n\in \N:~ \vl^{n}\text{ is killed}\right\}$, with convention $\inf\{\emptyset\} = \infty$. If $N< \infty$, then $\A_{\Rm}(t) = \partial$ for all $t\geq \beta_{\vl^N}+ \dtb_{\vl^N}$. By concatenating $\A_{\Rm}$ with the segment of $X_{\vl^N}$ after its lifetime $\dtb_{\vl^N}$, we define
\begin{equation}\label{eq:AX}
   \A^X_{\Rm}(t):=\begin{cases}
    \A_{\Rm}(t) , & t< \beta_{\vl^{N}}+ \dtb_{\vl^N}. \\
    X_{\vl^N}(t - \beta_{\vl^N}),, & t\geq \beta_{\vl^{N}}+ \dtb_{\vl^N}.
  \end{cases}
\end{equation}
We agree that $\A^X_{\Rm} = \A_{\Rm}$ if $N= \infty$. 
 \begin{lem}\label{lem:Heta}
Suppose that \ref{Heta} holds for both $X$ and $Y$. Then the process $\A^X_{\Rm}$ has the law of $P_x$ (the law of $X$ starting from $x$), and the process derived from $\A^X_{\Rm}$ by killing at $\zeta^X_{\Rm}:= \inf\left\{t\geq 0:~ \A^X_{\Rm}(t)\leq \epsilon \right\}$ is $\A_{\Rm}$. 
\end{lem}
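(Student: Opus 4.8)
The plan is to prove the two assertions separately: that $\A^X_{\Rm}$ has law $P_x$, by a regeneration argument based on the strong Markov property, and that $\A_{\Rm}$ is $\A^X_{\Rm}$ killed at $\zeta^X_{\Rm}$, by unwinding the construction. I would start with the second, which is elementary. By construction every $\vl^n$ with $n<N$ branches, so $\dtb_{\vl^n}\le T^X_{\vl^n}$ and hence $X_{\vl^n}(r)>\epsilon$ for all $r<\dtb_{\vl^n}$; moreover the birth size $a_{\vl^{n+1}}=X_{\vl^n}(\dtb_{\vl^n})$ is $>\epsilon$ whenever $n+1<N$, since otherwise $\vl^{n+1}$ would be killed at birth and $N$ would be smaller. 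Thus $\A_{\Rm}(t)>\epsilon$ for every $t<\beta_{\vl^N}+\dtb_{\vl^N}$, while at $\beta_{\vl^N}+\dtb_{\vl^N}$ (when finite) the process $\A^X_{\Rm}$, which by \eqref{eq:AX} then coincides with $X_{\vl^N}(\,\cdot-\beta_{\vl^N})$, drops to $\le\epsilon$: either $\dtb_{\vl^N}=T^X_{\vl^N}$ with $X_{\vl^N}(T^X_{\vl^N})\le\epsilon$, or $a_{\vl^N}\le\epsilon$ with $\dtb_{\vl^N}=0$; if $\dtb_{\vl^N}=\infty$ or $N=\infty$ there is nothing to prove, since then $\A^X_{\Rm}=\A_{\Rm}$ and the same estimates show it never reaches $\le\epsilon$. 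Comparing with \eqref{eq:A} this gives $\zeta^X_{\Rm}=\beta_{\vl^N}+\dtb_{\vl^N}$ and identifies $\A_{\Rm}$.

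\textbf{Regeneration.} For the law of $\A^X_{\Rm}$, let $\Psi_n$ be the process obtained by concatenating the killed pieces $X_{\vl^0}|_{[0,\dtb_{\vl^0})},\dots,X_{\vl^{n-1}}|_{[0,\dtb_{\vl^{n-1})}}$ and then the whole path of $X_{\vl^n}$; this is defined as soon as $\vl^0,\dots,\vl^{n-1}$ all branch, and $\Psi_n=\A^X_{\Rm}$ once $N<n$. Each $\dtb_v$ is an $(\Fb^v_t)$-stopping time and, by Definition \ref{defn:bif} and the remark following it, each marginal $X_v$ is strong Markov for $(\Fb^v_t)$. At a branching the construction restarts an independent bifurcator $(X_{v\vl},Y_{v\vl})$ of law $\Pb_{a_{v\vl}}$, whose first marginal is an independent copy of $X$ issued from $a_{v\vl}=X_v(\dtb_v)$; by the strong Markov property, replacing the tail $(X_v(\dtb_v+\cdot))$ by this independent copy leaves the joint law of $\bigl(X_v|_{[0,\dtb_v)},\,\text{continuation}\bigr)$ unchanged, and that concatenation is simply $X_v$. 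Hence one splicing step is law-preserving, and iterating shows $\Psi_n$ has law $P_x$ for every $n$. Since $\A^X_{\Rm}$ and $\Psi_n$ agree on $[0,\beta_{\vl^n})$ (and coincide once $N<n$), for each fixed $T$ the restriction $\A^X_{\Rm}|_{[0,T]}$ equals $\Psi_n|_{[0,T]}$ for $n$ large, provided $\beta_{\vl^n}\to\infty$ almost surely on $\{N=\infty\}$; granting this, $\A^X_{\Rm}$ has law $P_x$.

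\textbf{Non-explosion of the left-most branch (the main obstacle).} The remaining point, $\beta_{\vl^n}\uparrow\infty$ a.s.\ on $\{N=\infty\}$, is the crux of the proof, and it is where \ref{H} and \ref{Heta} are used. I would argue by contradiction on $\{N=\infty\}\cap\{\beta_{\vl^n}\uparrow\beta_\infty<\infty\}$. First, a supermartingale computation along the binary tree, in the spirit of Lemma \ref{lem:exc} and using \ref{H}, shows that $V_n:=\sum_{|v|=n}g(\beta_v,a_v)\ind{v\text{ exists}}$ is a non-negative supermartingale (the jump at the branching time $\dtb_v$ is absorbed into the excessive inequality), hence converges a.s.; so $g(\beta_{\vl^n},a_{\vl^n})$ stays bounded, which together with \eqref{eq:exc} keeps the birth sizes $a_{\vl^n}$ in a compact subset of $(0,\infty)$ along the branch. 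Second, applying \ref{Heta} to each $\Psi_n\eqdis X$ and using that the branching times are jump times of $\Psi_n$ of sizes $a_{\vl^{j-1}\vr}$ gives $\Exp{\sum_{j\le n}g(\beta_{\vl^j},a_{\vl^{j-1}\vr})}\le\eta\,g(0,x)$ uniformly in $n$, whence $\sum_j g(\beta_{\vl^j},a_{\vl^{j-1}\vr})<\infty$ and, by \eqref{eq:exc}, $a_{\vl^{j-1}\vr}\to0$; in particular all but finitely many branchings along the branch are of the switching type $\dtb_{\vl^n}=\tb_{\vl^n}$. Finally, since $\dtb_{\vl^n}=\tb_{\vl^n}\wedge T^X_{\vl^n}\wedge\tilde T^X_{\vl^n}$ is the lifetime of a fresh bifurcator started from $a_{\vl^n}$ in a fixed compact of $(0,\infty)$ and before the fixed time $\beta_\infty$, one has a uniform lower bound $\Probcond{\dtb_{\vl^n}>\delta}{\text{past at generation }n}\ge p_\delta>0$ for some $\delta>0$; the conditional Borel--Cantelli lemma then forces $\dtb_{\vl^n}>\delta$ infinitely often, contradicting $\beta_\infty<\infty$. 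This establishes non-explosion and completes the proof.
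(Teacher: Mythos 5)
Your outline is structurally sensible, and the argument that $\A_{\Rm}$ is $\A^X_{\Rm}$ killed at $\zeta^X_{\Rm}$, as well as the regeneration/splicing reasoning reducing the law of $\A^X_{\Rm}$ to the non-explosion of the left-most branch, matches the paper's setup. The genuine gaps are in the non-explosion argument, and they are two.

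First, from the convergence of the supermartingale $V_n:=\sum_{|v|=n}g(\beta_v,a_v)\ind{v\text{ exists}}$ you infer that $g(\beta_{\vl^n},a_{\vl^n})$ is bounded and then that, by \eqref{eq:exc}, the birth sizes $a_{\vl^n}$ lie in a compact of $(0,\infty)$. But \eqref{eq:exc} only states $\inf_{r<l,\,x>a}g(r,x)>0$: it gives a \emph{lower} bound on $g$ when the space argument is bounded away from $0$, and says nothing about the growth of $g$ as $x\to\infty$, nor does it give any control near $x=\epsilon$. Hence $g(\beta_{\vl^n},a_{\vl^n})$ bounded does not force $a_{\vl^n}$ to stay in a compact of $(\epsilon,\infty)$. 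Second, even granting compactness, the Borel--Cantelli step needs a uniform lower bound $\mathbb P(\dtb_{\vl^n}>\delta\mid\text{past})\ge p_\delta>0$, and in the regime you reduce to, $\dtb_{\vl^n}=\tb_{\vl^n}$ is the bifurcator's switching time. Definition \ref{defn:bif} only guarantees $\tb>0$ almost surely; there is no assumption of continuity in the starting point or of any quantitative lower bound for $\tb$, and it may very well happen that $\tb\to 0$ in probability along a sequence of starting points. So this uniform estimate is unsupported.

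The paper avoids both issues by splitting $\{N=\infty\}$ into the event that infinitely many of the $\vl^n$ are marked and the event that infinitely many are non-marked, and handling each with \ref{Heta} directly. For the non-marked case your argument is essentially the paper's: the jump at a non-marked branching has size $>\epsilon$, so by \eqref{eq:exc} its $g_X$-weight is bounded below on $\{\lim\beta_{\vl^n}<M\}$, and infinitely many such terms contradict the \ref{Heta} bound $\Exp{\sum_r g_X(r,-\Delta\A^X_{\vl^k}(r))}\le g_X(0,x)$. For the marked case, the paper runs a supermartingale for the $Y$-side: along the subsequence $(\vl^{n_i})$ of marked particles, $M_i:=\eta_Y^{-i}\sum_{r\ge0}g_Y(\beta_{\vl^{n_i}}+r,-\Delta Y_{\vl^{n_i}}(r))$ is a non-negative supermartingale (using \eqref{eq:mark} to chain the lineage segments on the $Y$-side across non-marked branchings). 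Its a.s.\ convergence forces $g_Y(\beta_{\vl^{n_i}\vl},X_{\vl^{n_i}\vl}(0))\le\eta_Y^{\,i}M_i\to0$, and since $X_{\vl^{n_i}\vl}(0)>\epsilon$ and the $\beta$'s are bounded on the event under consideration, \eqref{eq:exc} makes this impossible. No compactness of the $a_{\vl^n}$ and no lower bound on the switching time are ever used. You should replace the last paragraph of your proposal by this two-case argument.
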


\begin{proof}
 It should be intuitive that $\A^X_{\Rm}$ has the law of $P_x$ because of the construction \eqref{eq:XY0} and the strong Markov property of $X$; however, it is a priori possible that none of $\Rm$ is killed and their birth times accumulate to a finite limit, i.e. $N=\infty$ and $\lim_{n\to \infty}\beta_{\vl^n} < \infty$, then $\A^X_{\Rm}$ is killed at this limit time, thus does not have the law of $P_x$. We shall prove that this case does not happen, thanks to the assumption \ref{Heta}. Therefore, almost surely there are only two possible situations: either $N<\infty$, or $N=\infty$ \& $\lim_{n\to \infty} \beta_{\vl^n} =\infty$, so we deduce from the strong Markov property of $X$ that $\A^X_{\Rm}$ indeed has the law of $P_x$. Further, we easily check that $\zeta^X_{\Rm}= \beta_{\vl^{N}}+ \dtb_{\vl^N}$ when $N<\infty$ and $\zeta^X_{\Rm}= \infty$ when $N= \infty$, then the second part of the claim follows.   

So it remains to prove that if $N=\infty$, which means that none of $(\vl^n)_{n\in \N}$ is killed, then $\lim_{n\to \infty}\beta_{\vl^n} = \infty$. We consider separately the following two situations. 

In the first situation there are infinitely many marked elements in $\Rm$, and we list all of them in a sequence $(\vl^{n_i})_{i\in \N} \subset \Rm$ with $n_i\uparrow \infty$. 
Let $\G_n:= \sigma(X_{\vl^{j}}, Y_{\vl^{j}}, j\leq n)$ and $\gt_Y$ be a function such that \ref{Heta} holds for $Y$ with $\eta_Y<1$, then 
$$M_i := \eta_Y^{-i} \sum_{r \geq 0} \gt_Y(\beta_{\vl^{n_i}} + r, -\Delta Y_{\vl^{n_i}}(r)),\quad i\in \N $$ 
is a non-negative $\G_{n_i}$-super-martingale. 
Indeed, consider the ancestral lineage of $\vl^{n_{i+1}}$ for the $Y$-side, shifted to the left by $\beta_{\vl^{n_i}\vl}$ ($\vl^{n_i}\vl$ means $\vl^{n_i+1}$), that is 
$$A^Y_{i+1}(t):= \sum_{n_i+1 \leq k < n_{i+1}} \ind{\beta_{\vl^k}\leq  t + \beta_{\vl^{n_i}\vl}< \beta_{\vl^{k}}+ \dtb_{\vl^{k}}} Y_{\vl^k}(t+ \beta_{\vl^{n_i}\vl}-\beta_{\vl^k} ) + \ind{  t + \beta_{\vl^{n_i}\vl}\geq  \beta_{\vl^{n_{i+1}}}}Y_{\vl^{n_{i+1}}}(t + \beta_{\vl^{n_i}\vl}- \beta_{\vl^{n_{i+1}}}) ,\quad t\geq 0 ,$$
with $A^Y_{i+1}(0)= Y_{\vl^{n_i}\vl}(0)$. Then  
$$M_{i+1} \leq \eta_{Y}^{-(i+1)}\sum_{r \geq 0} \gt_{Y}(\beta_{\vl^{n_i}\vl} + r, -\Delta A^Y_{i+1}(r)).$$
Observing that these segments are connected by only non-marked branching events and using \eqref{eq:mark}, we hence deduce by the strong Markov property of $Y$ that conditionally on $\G_{n_i}$, $A^Y_{i}$ has distribution $Q_{y}$ with $y:= Y_{\vl^{n_i}\vl}(0)$. As $Y$ satisfies \ref{Heta}, we have
\begin{linenomath}\begin{align} 
  & \Expcond{M_{i+1}}{\G_{n_i}} \leq \Expcond{\eta_{Y}^{-(i+1)}\sum_{r \geq 0} \gt_{Y}(\beta_{\vl^{n_i}\vl} + r, -\Delta A^Y_{i+1}(r))}{\G_{n_i}} \\
&\leq   \eta_{Y}^{-i}\gt_{Y}(\beta_{\vl^{n_i}\vl}, Y_{\vl^{n_i}\vl}(0)) = \eta_{Y}^{-i}\gt_{Y}(\beta_{\vl^{n_i}}+\dtb_{\vl^{n_i}}, -\Delta Y_{\vl^{n_i}}(\dtb_{\vl^{n_i}}))  \leq M_{i},
  \end{align}\end{linenomath}
where the equality follows from \eqref{eq:mark} as $\vl^{n_i}$ is marked. We conclude that $M_i$ is a non-negative super-martingale and hence $M_i$ converges almost surely to a limit as $i\to \infty$.
Multiplying the last display by $\eta_Y^{n_i}$, we have 
$$ \gt_{Y}(\beta_{\vl^{n_i}\vl}, X_{\vl^{n_i}\vl}(0)) = \gt_{Y}(\beta_{\vl^{n_i}} + \dtb_{\vl^{n_i}}, -\Delta Y_{\vl^{n_i}}(\dtb_{\vl^{n_i}})) \leq \eta_{Y}^{i}M_i \to 0 \quad \text{almost surely}.$$ 
As $\gt_Y$ satisfies \eqref{eq:exc}, it follows that in the event $\lim_{n\to \infty} \beta_{\vl^n}  <\infty$, there is 
$\lim_{i\to \infty}  X_{\vl^{n_i}\vl}(0)= 0$. This is absurd as we have assumed that no element in $\Rm$ is killed. 
 
In the second situation, there are infinitely many non-marked branching elements in $\Rm$. 
Consider for each $k\in \N$ the ancestral lineage of $\vl^k$ for the side of $X$, i.e.
\begin{equation}
  \A^X_{\vl^k}(t):= \sum_{n=0}^{k-1} \ind{\beta_{\vl^n}\leq  t< \beta_{\vl^{n}\vl}} X_{\vl^n}(t-\beta_{\vl^n})+ \ind{\beta_{\vl^k}\leq  t} X_{\vl^k}(t- \beta_{\vl^k}),\quad t\geq 0 .
\end{equation}
Then for each $k\in \N$, we deduce from the strong Markov property of $X$ that $\A^X_{\vl^k}$ has law $P_x$. Let $\gt_X$ be a function such that \ref{Heta} holds for $X$ with constant $\eta_X<1$, then
\begin{equation}\label{eq:gtx}   \Exp{ \sum_{r\geq 0 } \gt_X(r, -\Delta \A^X_{\vl^k}(r)) } \leq \gt_X(0,x).\end{equation}
Suppose, by contradiction, that there exists a certain $M>0$ such that 
with probability $p_M>0$ there is $\lim_{n\to \infty} \beta_{\vl^n} < M$ and write $\inf_{t<M~\&~y\geq \epsilon}\gt_X(t,y)=:c_{M,\epsilon}>0$ as \eqref{eq:exc} holds for $\gt_X$. 
For every $k\in \N$, write $m_k$ for the number of non-marked particles in the set $\{\vl^i, i< k\}$, then we get that  
$$   \Exp{ \sum_{r\geq 0 } \gt_X(r, -\Delta \A^X_{\vl^k}(r)) } 
\geq  \Exp{\sum_{1\leq i\leq k-1}\ind{ \vl^i\text{ is non-marked}} \gt_X(\beta_{\vl^{i}}, -\Delta X_{\vl^{i}}(\dtb_{\vl^{i}})) } 
\geq p_M m_k c_{M,\epsilon}, $$
where the last inequality is obtained by restricting to the event $\lim_{n\to \infty} \beta_{\vl^n} < M$ and observing that $-\Delta X_{\vl^{i}}(\dtb_{\vl^{i}}) \geq \epsilon$ whenever $\vl^{i}$ is non-marked. 
Letting $k\to \infty$, we find a contradiction against \eqref{eq:gtx} since $m_k \to \infty$. We therefore prove the claim. 
\end{proof}

\begin{rem}Given the system $((X_{v},Y_{v}) ,v\in \Utwo)$, let us define a branch $\bar{v}:=(v_n\in \Utwo )_{n\geq 0}$ with $v_0 := \emptyset$ by setting recursively $v_{n+1}= v_n\vl$ if $v_n$ is non-marked and $v_{n+1}= v_n\vr$ if $v_n$ is marked. Then the branch $\A^Y_{\bar{v}}$ associated with the system $((X_{v},Y_{v}) ,v\in \Utwo)$ has the same law as $Y$. Indeed, recall that the system $((Y_{h(v)},X_{h(v)}) ,v\in \Utwo)$ defined as in the proof of Lemma~\ref{lem:WW} can be viewed as a system generated by the bifurcator $(Y,X)$, then applying Lemma~\ref{lem:Heta} to this system, we have that $\A^Y_{h(\Rm)}$ has the same law as $Y$. We observe that $\bar{v} = h(\Rm)$ by the construction of $h$, which entails our claim.
\end{rem}

\begin{lem}\label{lem:WX}
$\Ws_{(X,Y)}$ has the same law as $\cutc{\Xs}$.
\end{lem}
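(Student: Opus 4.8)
The plan is to show that the process $\Ws_{(X,Y)}$, which records only the $X$-components of the family $((X_v,Y_v),v\in\Utwo)$, is --- after reindexing $\Utwo$ by its left spines --- the growth-fragmentation process driven by the process $\hat X$ obtained from $X$ by killing it at its first entrance in $(0,\epsilon]$, and hence has the same law as $\cutc{\Xs}$. The relevant left spines are $\Rm=\{\emptyset,\vl,\vl\vl,\dots\}$ and, recursively, $v\vr\Rm=\{v\vr,v\vr\vl,v\vr\vl\vl,\dots\}$ for each right child $v\vr$; each plays the role of one cell, and the surviving right children will index its daughters.

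The first input is Lemma~\ref{lem:Heta}: the concatenated process $\A^X_{\Rm}$ of \eqref{eq:AX} has law $P_x$, and $\A_{\Rm}$ of \eqref{eq:A} is $\A^X_{\Rm}$ killed at $\zeta^X_{\Rm}=\inf\{t\ge0:\A^X_{\Rm}(t)\le\epsilon\}$, i.e. $\A_{\Rm}$ has the law of $\hat X$ started from $x$; applied to the subtree rooted at a right child $v\vr$, conditionally on its initial size $a_{v\vr}$, it gives the analogous statement along $v\vr\Rm$ with $x$ replaced by $a_{v\vr}$. The combinatorial core is then to identify the jumps of $\A_{\Rm}$ of size $>\epsilon$ with the right children $\vl^n\vr$ along $\Rm$ that are not killed at birth. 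Since $\dtb_v=\tb_v\wedge T^X_v\wedge\tilde{T}^X_v\le\tilde{T}^X_v$, no $X_v$ has a jump of size $>\epsilon$ strictly before its lifetime, so every jump of $\A^X_{\Rm}$ of size $>\epsilon$ before $\zeta^X_{\Rm}$ sits at a branching time $\beta_{\vl^n}+\dtb_{\vl^n}$ with $\dtb_{\vl^n}=\tilde{T}^X_{\vl^n}$, where by \eqref{eq:XY0} the child $\vl^n\vr$ is born with size $a_{\vl^n\vr}=-\Delta X_{\vl^n}(\dtb_{\vl^n})$ equal to that jump; the remaining branchings along $\Rm$ have $\dtb_{\vl^n}=\tb_{\vl^n}<\tilde{T}^X_{\vl^n}$ and, since $\tb_{\vl^n}$ is then a jump time of $X_{\vl^n}$ by \eqref{eq:bif}, spawn a child of size $-\Delta X_{\vl^n}(\tb_{\vl^n})\le\epsilon$, which is killed at birth and invisible in $\Ws_{(X,Y)}$; while $\dtb_{\vl^n}=T^X_{\vl^n}$ (or $\dtb_{\vl^n}=\infty$) is the killing (resp. perpetual survival) of the branch, hence of the cell. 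Distinguishing marked from non-marked branchings via \eqref{eq:mark} turns this into a bijection, respecting birth times and sizes, between the jumps of $\A_{\Rm}$ of size $>\epsilon$ and the surviving right children along $\Rm$.

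With this, the asymmetric Markov branching property, Definition~\ref{defn:bif}(iii), gives that at each branching the two children are fresh independent bifurcators with the prescribed initial sizes; hence, conditionally on $\A^X_{\Rm}$ --- equivalently, on $\zeta^X_{\Rm}$ and on the jump times and sizes of $\A_{\Rm}$ exceeding $\epsilon$ --- the subtrees hanging from the surviving right children along $\Rm$ are jointly independent, each built by the same recipe from an ancestor of the corresponding size. Iterating this, $\Ws_{(X,Y)}$ is exactly: a copy of $\hat X$ started from $x$, giving birth at each of its jumps of size $>\epsilon$ to a daughter of that size which evolves independently as a copy of $\hat X$ and spawns daughters by the same rule; a daughter born with size $\le\epsilon$ is at once killed and contributes nothing, in accordance with \eqref{eq:cut} ($\cutc{\Xc}_u\equiv\partial$ for such a cell). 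In other words, the $X$-family behind $\Ws_{(X,Y)}$ (after adjoining the dead daughters at the remaining jumps, which are immaterial for the associated point process) satisfies the defining description of a cell system driven by $\hat X$; since that law is unique by \cite{Jagers}, recalled in Section~\ref{sec:HGF}, and $\cutc{\Xs}$ is the growth-fragmentation of that cell system, we conclude $\Ws_{(X,Y)}\eqdis\cutc{\Xs}$.

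The step I expect to be the main obstacle is the bookkeeping of the second paragraph: one has to check that no jump of size $>\epsilon$ of the lineage $\A_{\Rm}$ is lost --- neither absorbed into a killing $\dtb_v=T^X_v$, nor hidden among the small switching branchings --- and that the four cases $\dtb_v\in\{T^X_v,\tb_v,\tilde{T}^X_v,\infty\}$ translate respectively into death of the cell; a daughter of size $\le\epsilon$ if $\tb_v<\tilde{T}^X_v$ and a genuine big-jump daughter if $\tb_v=\tilde{T}^X_v$; a genuine big-jump daughter; and perpetual survival. Once this correspondence is pinned down, the probabilistic inputs are just Lemma~\ref{lem:Heta}, the strong Markov property of $X$, and Definition~\ref{defn:bif}(iii).
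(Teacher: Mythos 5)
Your proof is correct and follows essentially the same route as the paper's: in both, the left spines of the bifurcator tree are reindexed to form a cell system driven by $X$, Lemma~\ref{lem:Heta} is applied along each spine $w\vr\Rm$ to obtain the correct marginal law, and the case analysis on $\dtb_v\in\{T^X_v,\tb_v,\tilde{T}^X_v,\infty\}$ identifies the jumps of $\A_{\Rm}$ exceeding $\epsilon$ (and only those) with surviving right children. The only cosmetic differences are that you phrase the target object as ``the growth-fragmentation driven by $X$ killed at entering $(0,\epsilon]$'' and close with Jagers's uniqueness, whereas the paper builds the cell system $\Xc$ with $\Xc_\emptyset=\A^X_{\Rm}$ explicitly and observes $\cutc\Xs=\Ws_{(X,Y)}$ pathwise; also your parenthetical ``equivalently'' slightly overstates a sigma-algebra identity, but the intended claim (the conditional law of the subtrees depends on $\A^X_{\Rm}$ only through $\zeta^X_{\Rm}$ and the jumps exceeding $\epsilon$) is what the argument actually needs and is correct.
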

\begin{proof}
We first give some remarks on the process $\A^X_{\Rm}$ defined as in \eqref{eq:AX}. We know from Lemma~\ref{lem:Heta} that $\A^X_{\Rm}$ has law $P_x$, and $\A^X_{\Rm}$ killed at $\zeta_{\Rm}:=\inf\left\{t\geq 0:~ \A^X_{\Rm}(t)\leq \epsilon \right\}$ is $\A_{\Rm}$. 
For every $\vl^n\in \Rm$ such that $t:= \beta_{\vl^n}+\dtb_{\vl^n}\leq \zeta_{\Rm}$, we have by \eqref{eq:mark} that the size of the jump at $t$ is
$$y:= -\Delta\A^X_{\Rm}(t)= -\Delta X_{\vl^n}(\dtb_{\vl^n}) = X_{\vl^n\vr}(0).$$ 
Note that it is possible that $y>\epsilon$ or $y\leq \epsilon$: if $y\leq \epsilon$, then we know that the particle $\vl^n\vr$ (together with its progeny) is killed immediately, that is $\dtb_{\vl^n\vr}=0$.
On the other hand, for all $m\in \N$ and $t'\in (\beta_{\vl^m}, \beta_{\vl^m}+\dtb_{\vl^m})$, we have$ -\Delta\A^X_{\Rm}(t')\leq \epsilon.$

Let us now describe the dynamics of $\Ws_{(X,Y)}$ as the following truncated cell system. The cell system starts with a cell whose size evolves according to $\Xc_{\emptyset}:=A^X_{\Rm}$ with law $P_x$. By killing $\Xc_{\emptyset}$ at the time when entering $(0,\epsilon]$, we get $\cutc{\Xc}_{\emptyset}=\A_{\Rm}$. We next build the first generation. The daughter cells in the first generation born at time $>\zeta_{\Rm}$ are all killed. For each time $t\leq \zeta_{\Rm}$ with $y := -\Delta \Xc_{\emptyset}(t)>\epsilon$, we observe from the remarks above that there exists a certain $w \in \Rm$ such that $t  = \beta_{w}+ \dtb_{w}$ and $X_{w\vr}(0) = y$. So a daughter cell is born at $t$ and its size evolves according to $\A^X_{w\vr \Rm}$, which is the process associated with $w\vr\Rm:= \{ w\vr v, v\in \Rm\}$ as in \eqref{eq:AX}. As $w\vr\Rm$ is the left-most branch in the sub-tree $(w\vr v, v\in \Utwo)$, we deduce from Lemma~\ref{lem:Heta} that $\A^X_{w\vr \Rm}$ has distribution $P_{y}$ and $\A_{w\vr \Rm}$ defined as in \eqref{eq:A} is $\A^X_{w\vr \Rm}$ killed when entering $(0,\epsilon]$.
On the other hand, for every time $t'\leq \zeta_{\Rm}$ with $y':=-\Delta \Xc_{\emptyset}(t')\in(0, \epsilon]$, we agree that the daughter cell born at $t'$ is killed immediately. We hence conclude that those non-degenerate size processes $\cutc{\Xc}_i$ with $i\in \N$ are exactly those non-degenerate processes $\A_{w\vr \Rm}$ with $w\in \Rm$. The proof is completed by iteration of this argument.
\end{proof}

\begin{proof}[Proof of Theorem \ref{thm:1}]
For every $\epsilon>0$, applying Lemma \ref{lem:WX} to $\Ws_{(Y,X)}$, we deduce that $\Ws_{(Y,X)}$ and $\cutc{\Ys}$ have the same law. Together with Lemma \ref{lem:WW}, this implies that $\cutc{\Xs}$ and $\cutc{\Ys}$ have the same law. Letting $\epsilon \to 0+$, we conclude by Lemma \ref{lem:CV} that $\Xs$ and $\Ys$ have the same finite-dimensional distribution. 
 \end{proof}


\subsection{Proof of Theorem \ref{thm:2'}}\label{sec:proofs}
Using Theorem \ref{thm:1}, we complete the proof of Theorem \ref{thm:2'}. 

\begin{proof}[Proof of Theorem \ref{thm:2'}] 
The implication $(i)\Rightarrow (ii)$ follows from Lemma \ref{lem:bifH} and the equivalence $(iii)\Leftrightarrow (i)$ follows from Corollary \ref{cor:prop1}. So it remains to prove that $(ii)\Rightarrow (iii)$. Suppose that $X^{(0)}$ and ${\tilde{X}}^{(0)}$ can be coupled to form a bifurcator.  We can check that $X^{(0)}$ and ${\tilde{X}}^{(0)}$ satisfy \ref{H} and \ref{Heta}, then we are led to the conclusion that the growth-fragmentations have the same finite dimensional distribution by Theorem \ref{thm:1}. Indeed, fix $q\geq 2$ and $K > \kappa(q)$, then we know from Example \ref{eg:hom} that $X^{(0)}$ satisfies \ref{H} with the function $(t,x)\mapsto x^q \e^{-K t}$; further, it follows from \eqref{eq:hom} that $X^{(0)}$ also satisfies \ref{Heta} with this function and any $\eta \in ( \frac{\kappa(q) -\Phi(q)}{K - \Phi(q)}, 1)$. Similarly, we have that ${\tilde{X}}^{(0)}$ also satisfies both \ref{H} and \ref{Heta}. This completes the proof. 
\end{proof}

\subsection{Proof of Theorem \ref{thm:2}}\label{sec:ss}

We now turn to self-similar growth-fragmentations. In order to prove Theorem \ref{thm:2} (and thus close this paper), we first prove the following lemmas.  

\begin{lem}\label{lem:ssp}
Let $X^{(\alpha)}$  be a self-similar cell process with index $\alpha\in \R$ related to a SNLP $\xi$ as in \eqref{eq:Lamperti}. Suppose $\kappa(q)<0$ for a certain $q>0$, then $X^{(\alpha)}$ satisfies both \ref{H} and \ref{Heta} (for any $\eta\in (  1 - \frac{\kappa(q)}{\Phi(q)}, 1)$) with the function $(t,x)\mapsto x^q$.  
\end{lem}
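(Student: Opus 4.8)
The plan is to establish the two required bounds first for the homogeneous index $\alpha=0$ and then transfer them to a general $\alpha$ through Lamperti's time change. Put $f(t,x)=g(t,x):=x^{q}$, with $\partial^{q}:=0$; this function does not depend on $t$ and trivially satisfies \eqref{eq:exc}, since $\inf_{x>a}x^{q}=a^{q}>0$ for every $a>0$. Since $\kappa\geq\Phi$ by \eqref{eq:kappa}, the hypothesis forces $\Phi(q)\leq\kappa(q)<0$; in particular $\Phi(q)$ is finite and $\int_{(-\infty,0)}(1-\e^{z})^{q}\,\mLevy(dz)=\kappa(q)-\Phi(q)<\infty$ even when $q<2$. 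Because $f$ is time-independent, \ref{H} for $X^{(\alpha)}$ amounts to $\Exp[x]{X^{(\alpha)}(t)^{q}+\sum_{0\leq r\leq t}|\Delta X^{(\alpha)}(r)|^{q}}\leq x^{q}$ for all $t\geq0$, and \ref{Heta} amounts to $\Exp[x]{\sum_{r\geq0}|\Delta X^{(\alpha)}(r)|^{q}}\leq\eta x^{q}$.

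For the homogeneous process $X^{(0)}$, which under $P_{y}$ is $y\exp(\xi)$, I would first observe that the compensation-formula computation behind Example~\ref{eg:hom} and \eqref{eq:hom} only requires $\kappa(q)<\infty$, hence applies to our $q$; taking the parameter $K$ there to be $0$ (legitimate since $\Phi(q)<0$) gives, for every $y>0$ and $t\geq0$,
\begin{equation}\label{eq:ssp-hom}
\Exp[y]{X^{(0)}(t)^{q}+\sum_{0\leq r\leq t}|\Delta X^{(0)}(r)|^{q}}=\Bigl(\e^{\Phi(q)t}+\bigl(1-\tfrac{\kappa(q)}{\Phi(q)}\bigr)\bigl(1-\e^{\Phi(q)t}\bigr)\Bigr)y^{q}\leq y^{q},
\end{equation}
the last inequality because $0\leq1-\kappa(q)/\Phi(q)<1$ (as $\Phi(q)\leq\kappa(q)<0$) and $\e^{\Phi(q)t}\leq1$. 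Decomposing the jump-sum at the conditioning time and applying the simple Markov property of $X^{(0)}$ together with \eqref{eq:ssp-hom} from the current position shows that the nonnegative process $V(t):=X^{(0)}(t)^{q}+\sum_{0\leq r\leq t}|\Delta X^{(0)}(r)|^{q}$ is a supermartingale with $V(0)=x^{q}$ (with respect to the natural filtration of $\xi$). Letting $t\to\infty$ in \eqref{eq:ssp-hom}, using $\Exp[x]{X^{(0)}(t)^{q}}=x^{q}\e^{\Phi(q)t}\to0$ and monotone convergence for the increasing jump-sum, also gives $\Exp[x]{\sum_{r\geq0}|\Delta X^{(0)}(r)|^{q}}=(1-\kappa(q)/\Phi(q))x^{q}$.

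To pass to a general $\alpha$, I would use that, under $P_{x}$, the process $X^{(\alpha)}$ is obtained from the \emph{same} $\xi$ by \eqref{eq:Lamperti}, i.e.\ $X^{(\alpha)}(t)=X^{(0)}(\tau^{(\alpha)}_{tx^{\alpha}})$, where $r\mapsto\tau^{(\alpha)}_{r}$ is continuous and strictly increasing (being the inverse of the strictly increasing continuous additive functional $r\mapsto\int_{0}^{r}\e^{-\alpha\xi(s)}\,ds$), and $X^{(\alpha)}(t)=\partial$ once $tx^{\alpha}$ reaches $\int_{0}^{\infty}\e^{-\alpha\xi(s)}\,ds$. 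Being continuous and strictly increasing, this time change carries left limits to left limits and puts the jump times of $X^{(\alpha)}$ in $[0,t]$ in magnitude-preserving bijection with those of $X^{(0)}$ in $[0,\tau^{(\alpha)}_{tx^{\alpha}}]$; consequently $X^{(\alpha)}(t)^{q}+\sum_{0\leq r\leq t}|\Delta X^{(\alpha)}(r)|^{q}$ equals $V(\tau^{(\alpha)}_{tx^{\alpha}})$ when $\tau^{(\alpha)}_{tx^{\alpha}}<\infty$, and equals $\sum_{r\geq0}|\Delta X^{(0)}(r)|^{q}\leq V(\infty)$ when $\tau^{(\alpha)}_{tx^{\alpha}}=\infty$; in all cases it is at most $V(\tau^{(\alpha)}_{tx^{\alpha}})$. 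Since $\tau^{(\alpha)}_{tx^{\alpha}}$ is a stopping time of the (completed) natural filtration of $\xi$, optional stopping for the nonnegative supermartingale $V$ — valid at any, possibly infinite, stopping time — yields $\Exp[x]{V(\tau^{(\alpha)}_{tx^{\alpha}})}\leq V(0)=x^{q}$, which is \ref{H}. Finally, the total jump mass of $X^{(\alpha)}$ over its whole lifetime coincides with that of $X^{(0)}$, so $\Exp[x]{\sum_{r\geq0}|\Delta X^{(\alpha)}(r)|^{q}}=(1-\kappa(q)/\Phi(q))x^{q}\leq\eta x^{q}$ for every $\eta\in(1-\kappa(q)/\Phi(q),1)$, which is \ref{Heta} with the stated constant.

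The identities themselves are routine given the compensation formula of \eqref{eq:hom} and the fact that $\Exp{\e^{q\xi(t)}}=\e^{\Phi(q)t}$. The step I expect to be the main obstacle is the bookkeeping around the Lamperti time change: checking that $\tau^{(\alpha)}_{tx^{\alpha}}$ is genuinely a stopping time, that left limits and jump times transfer exactly (which is where strict monotonicity and continuity of $\tau^{(\alpha)}$ are needed, including the boundary case in which $\int_{0}^{\infty}\e^{-\alpha\xi(s)}\,ds$ is finite so that $X^{(\alpha)}$ is killed in finite time), and justifying optional stopping for $V$ at a possibly infinite stopping time (via $V(\tau^{(\alpha)}_{tx^{\alpha}}\wedge n)$, supermartingale convergence, and Fatou). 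Everything else is straightforward computation.
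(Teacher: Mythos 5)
Your proof is correct, and it is genuinely different from the paper's in the trivial but real sense that the paper's entire proof of this lemma is a citation: ``This follows directly from Lemma 2 and Lemma 3 in \cite{Bertoin:growth}.'' You supply a self-contained argument that fills in what the paper outsources. The strategy you use — reduce to $\alpha=0$ via Lamperti's time change, show $V(t)=X^{(0)}(t)^q+\sum_{r\leq t}|\Delta X^{(0)}(r)|^q$ is a nonnegative supermartingale starting at $x^q$, then apply optional stopping at the $\xi$-stopping time $\tau^{(\alpha)}_{tx^\alpha}$ — is almost certainly the content of the cited lemmas in \cite{Bertoin:growth}, so modulo provenance you and the reference are doing the same mathematics. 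A few small observations on the details you flagged as potential obstacles, all of which you handled correctly: taking $K=0$ in the computation of Example~\ref{eg:hom} is legitimate because both of the needed inequalities $K>\Phi(q)$ and $K\geq\kappa(q)$ reduce to $\kappa(q)<0$ (you cite only $\Phi(q)<0$, but you then re-verify the resulting bound directly, so nothing is lost); the coefficient $1-\kappa(q)/\Phi(q)$ lies in $[0,1)$ precisely because $\Phi(q)\leq\kappa(q)<0$, which makes the $\alpha=0$ bound a genuine inequality and makes your $\eta$ admissible; $\{\tau^{(\alpha)}_{r}\leq s\}=\{\int_0^s\e^{-\alpha\xi(u)}du\geq r\}\in\F_s$, so $\tau^{(\alpha)}_{tx^\alpha}$ is indeed a stopping time; and the jump-preserving bijection under the continuous, strictly increasing time change is exactly what makes $\sum_{r\geq0}|\Delta X^{(\alpha)}(r)|^q=\sum_{r\geq0}|\Delta X^{(0)}(r)|^q$ hold, giving \ref{Heta} with the stated constant. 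The one point you state without full justification is that $V(\infty)\geq\sum_{r\geq0}|\Delta X^{(0)}(r)|^q$: this follows because $V(t)$ dominates the monotone increasing jump-sum, so the a.s.\ limit of $V$ dominates its monotone limit — worth a clause but not a gap.
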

\begin{proof}
This follows directly from Lemma 2 and Lemma 3 in \cite{Bertoin:growth}.
\end{proof}

\begin{lem}\label{lem:bif}
Let $X^{(\alpha)}$ and $Y^{(\alpha)}$ be two self-similar cell processes with index $\alpha\in \R$ related to SNLPs $\xi$ and $\gamma$ respectively as in \eqref{eq:Lamperti}. Suppose that $\kappa = \kappa_{\gamma}$, then $X^{(\alpha)}$ and $Y^{(\alpha)}$ can be coupled to form a bifurcator. 
\end{lem}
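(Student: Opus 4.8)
The plan is to obtain a bifurcator of $X^{(\alpha)}$ and $Y^{(\alpha)}$ by transporting a homogeneous one through Lamperti's time-substitution. Since $\kappa=\kappa_\gamma$, Lemma~\ref{lem:bifH} (after rescaling both coordinates by $x$, which preserves the defining properties) provides a bifurcator $(X,Y)$ of homogeneous cell processes started from $x$, with marginal laws those of $x\exp(\xi)$ and $x\exp(\gamma)$; write $\tb_0:=\inf\{t\geq 0:X(t)\neq Y(t)\}$ for its switching time, and recall from the construction that on $\{\tb_0<\infty\}$ one has $0<\tb_0<\zeta$, $X(\tb_0)+Y(\tb_0)=X(\tb_0-)=Y(\tb_0-)$ and $X(\tb_0)\neq Y(\tb_0)$. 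For a positive càdlàg path $w$ with no positive jumps and lifetime $\zeta_w$, put
$$\theta_w(t):=\inf\left\{r\geq 0:\int_0^r w(s)^{-\alpha}\,ds\geq t\right\},\qquad (\mathscr{L}_\alpha w)(t):=w(\theta_w(t)),$$
with the convention $(\mathscr{L}_\alpha w)(t):=\partial$ once $t\geq\int_0^{\zeta_w}w(s)^{-\alpha}\,ds$. Comparing with \eqref{eq:Lamperti}, $\mathscr{L}_\alpha(x\exp(\xi))$ has the law $P_x$ of $X^{(\alpha)}$ and $\mathscr{L}_\alpha(x\exp(\gamma))$ the law $Q_x$ of $Y^{(\alpha)}$; moreover $(\mathscr{L}_\alpha(yw))(u)=y\,(\mathscr{L}_\alpha w)(y^\alpha u)$, so by \eqref{eq:ss} the process $\mathscr{L}_\alpha$ applied to the homogeneous bifurcator started from $y>0$ has marginals $P_y$ and $Q_y$; denote its law by $\Pb_y$. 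I would then set $(U,V):=(\mathscr{L}_\alpha X,\mathscr{L}_\alpha Y)$ and check the three conditions of Definition~\ref{defn:bif}. Condition~(i) is immediate since $\mathscr{L}_\alpha$ is a measurable path functional.

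For condition~(ii): as $X$ and $Y$ have positive càdlàg paths coinciding on $[0,\tb_0)$, so do the clocks, $\theta_X\equiv\theta_Y$ on $[0,\sigma_0)$ with $\sigma_0:=\int_0^{\tb_0}X(s)^{-\alpha}ds$, hence $U\equiv V$ on $[0,\sigma_0)$. On $\{\tb_0<\infty\}$, boundedness of $X$ away from $0$ and $\infty$ on $[0,\tb_0]$ gives $\sigma_0<\infty$, and continuity and strict monotonicity of $r\mapsto\int_0^r X(s)^{-\alpha}ds$ (likewise for $Y$) up to $\tb_0$ give $\theta_X(\sigma_0)=\theta_Y(\sigma_0)=\tb_0$, so $U(\sigma_0)=X(\tb_0)\neq Y(\tb_0)=V(\sigma_0)$ while $U(\sigma_0-)=X(\tb_0-)$. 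Hence the switching time of $(U,V)$ is $\tb=\sigma_0$, and on $\{\tb<\infty\}$ the junction relation \eqref{eq:bif} follows: $U(\tb)+V(\tb)=X(\tb_0)+Y(\tb_0)=X(\tb_0-)=U(\tb-)=V(\tb-)$.

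The core is condition~(iii). First I would record the elementary identity: for a positive càdlàg path $w$ and a time $t_0$ at which $\theta_w$ is continuous, writing $s_0:=\theta_w(t_0)$, $y:=w(s_0)$, $\tilde w:=w(s_0+\cdot)/y$, one has $(\mathscr{L}_\alpha w)(t_0+u)=y\,(\mathscr{L}_\alpha\tilde w)(y^\alpha u)$ for all $u\geq 0$ (change of variable $r=s_0+v$ in the definition of $\theta_w$, using $\int_0^{s_0}w^{-\alpha}=t_0$). Apply this to $U$ with $s_0^X:=\theta_X(t)$ and to $V$ with $s_0^Y:=\theta_Y(t)$. On $\{\tb>t\}$ one has $s_0^X=s_0^Y<\tb_0$ and $U(t)=V(t)=:y$; the homogeneous asymmetric Markov branching property (Definition~\ref{defn:bifH}) says that conditionally $(X(s_0^X+\cdot)/y,\,Y(s_0^Y+\cdot)/y)$ is a fresh homogeneous bifurcator started from $1$, so the displayed identity identifies the conditional law of $(U(r),V(r))_{r\geq t}$ with $\Pb_{U(t)}$. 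On $\{\tb\leq t\}$ one has $s_0^X\geq\tb_0$, $s_0^Y\geq\tb_0$; conditionally on the path of $(X,Y)$ up to $\tb_0$, the post-$\tb_0$ parts of $X$ and of $Y$ are independent homogeneous cell processes of the laws of $x\exp(\xi)$ and $x\exp(\gamma)$ restarted, so $(s_0^X,(X(s_0^X+\cdot)))$ and $(s_0^Y,(Y(s_0^Y+\cdot)))$ are conditionally independent, and applying the strong Markov property at $s_0^X$ and $s_0^Y$ together with the displayed identity shows $(U(r))_{r\geq t}$ and $(V(r))_{r\geq t}$ are conditionally independent of respective laws $P_{U(t)}$ and $Q_{V(t)}$. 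Finally, $(U(t),V(t),\ind{\tb>t})_{t\geq 0}$ inherits the Markov property from $(X,Y)$ because $\mathscr{L}_\alpha$ acts as a time change on each coordinate (synchronized up to $\tb$).

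The step I expect to be the main obstacle is making the last paragraph rigorous, above all the conditional independence on $\{\tb\leq t\}$: after the switch the two coordinates run on genuinely different random clocks $\theta_X$ and $\theta_Y$, so one must argue---conditioning on the switching time $\tb_0$ and the path of $(X,Y)$ up to $\tb_0$---that $(s_0^X,(U(r))_{r\geq t})$ is a functional of the $X$-branch alone while $(s_0^Y,(V(r))_{r\geq t})$ is one of the $Y$-branch alone, and that $s_0^X$, $s_0^Y$ are stopping times for the relevant filtrations so that the strong Markov property genuinely applies. The continuity and strict monotonicity of the clocks up to the lifetime, invoked to justify $\theta_X(\sigma_0)=\tb_0$ and $\{\tb>t\}=\{\theta_X(t)<\tb_0\}$, are routine but should be stated explicitly.
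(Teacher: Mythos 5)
Your construction is the same as the paper's, just phrased in a different order: the paper Lamperti-transforms the switching-coupled L\'evy pair $(\xi,\gamma=\xi^{[p]})$ and then grafts an independent copy of the self-similar process at $\tb^{(\alpha)}$, whereas you push the homogeneous bifurcator of Lemma~\ref{lem:bifH} (which already contains the grafting at the L\'evy level) through the Lamperti functional $\mathscr{L}_\alpha$; the resulting pair is the same. Your write-up is in fact more careful than the paper, which defines $\hat{Y}^{(\alpha)}$ and simply asserts it is a bifurcator: the shift identity for $\mathscr{L}_\alpha$, the identification $\{\tb>t\}=\{\theta_X(t)<\tb_0\}$, and the post-switch conditional independence that you flag (each of $(\theta_X(t), (U(r))_{r\geq t})$ and $(\theta_Y(t), (V(r))_{r\geq t})$ being a measurable functional of its own branch given the shared history up to $\tb_0$) are precisely the verifications the paper leaves implicit.
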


\begin{proof}  
By Proposition \ref{prop:st} we may assume that $\xi$ is the switching transform of $\gamma$ with switching time $\tb= \inf\{t\geq 0:~ \xi(t)\neq \gamma(t)\}$. Say $X^{(\alpha)}(0)=Y^{(\alpha)}(0)=x>0$, we set  
$$\tb^{(\alpha)}:= x^{-\alpha}\int_0^{\tb} \exp(-\alpha \xi(r))dr ,$$
then we have by Lamperti's time-substitution \eqref{eq:Lamperti} that $\tb^{(\alpha)} = \inf \{t\geq 0: X^{(\alpha)}(t) \neq \tildeX^{(\alpha)}(t)\}$ and $X^{(\alpha)}(\tb^{(\alpha)})+ \tildeX^{(\alpha)}(\tb^{(\alpha)}) =X^{(\alpha)}(\tb^{(\alpha)}-)$. 
Let $\tilde{Y}^{(\alpha)}$ be an independent copy of $Y^{(\alpha)}$ and we build a process
$$\hat{Y}^{(\alpha)} (t):=  Y^{(\alpha)}(t)\ind{t< \tb^{(\alpha)}} + y \tilde{Y}^{(\alpha)}(y^{\alpha}(t-\tb^{(\alpha)}))\ind{t\geq \tb^{(\alpha)}} ,\quad t\geq 0,$$
where $y:= -\Delta X^{(\alpha)}(\tb^{(\alpha)})$. Then $(X^{(\alpha)},\hat{Y}^{(\alpha)})$ is a bifurcator.  
\end{proof}

We now complete the proof of Theorem \ref{thm:2}. 
\begin{proof}[Proof of Theorem \ref{thm:2}]
$(i) \Rightarrow (ii)$: This follows from Lemma \ref{lem:bif}. 

$(ii)\Rightarrow (iii)$: Since Lemma \ref{lem:ssp} ensures that \ref{H} and \ref{Heta} hold under assumption \eqref{eq:sspH}, we have from Theorem \ref{thm:1} that the self-similar growth-fragmentations $\Xs^{(\alpha)}$ and $\tilde{\Xs}^{(\tilde{\alpha})}$ have the same finite-dimensional distribution. 

$(iii)\Rightarrow (i)$: Suppose that the growth-fragmentations $\Xs^{(\alpha)}$ and $\tilde{\Xs}^{(\tilde{\alpha})}$ have the same finite-dimensional distribution. We first know from the self-similarity (Theorem 2 in \cite{Bertoin:growth}) that $\alpha= \tilde{\alpha}$. 

To prove $\kappa = \tilde{\kappa}$, we first deduce from Proposition 5 and its proof in \cite{BCK:martingales} that for every $q>0$, there is 
\begin{equation}\label{eq:pot}
 \Exps[1]{\int_0^{\infty} \left(\sum_{y\in \Xs^{(\alpha)}(t)} y^{q+\alpha} \right) dt} =\begin{cases}
    -\frac{1}{\kappa(q)}, & \text{if } \kappa(q)<0, \\
    \infty, & \text{otherwise}.
  \end{cases}
\end{equation}
Note from Corollary 4 in \cite{Bertoin:growth} that the integrand possesses c\`adl\`ag paths under assumption \eqref{eq:sspH}. As $\Xs^{(\alpha)}$ and $\tilde{\Xs}^{(\alpha)}$ have the same same finite-dimensional distribution, we thus deduce that for every $q>0$ with $\kappa(q)<0$, there is $\tilde{\kappa}(q)= \kappa(q)<0$. 
Therefore, if there exists $q_0>0$ such that $\kappa(q)<0$ for all $q>q_0$, then $\kappa(q) = \tilde{\kappa}(q)$ for all $q>q_0$. Otherwise, by the convexity of $\kappa$ these exists $\omega>0$, which is the largest root of $\kappa$, such that $\kappa(q)>0$ for all $q>\omega$. It follows from \eqref{eq:pot} that $\omega$ is also the largest root for $\tilde{\kappa}$ and $\tilde{\kappa}(q)>0$ for all $q>\omega$. 
We hence deduce from Theorem 7 in \cite{BCK:martingales} that $\tilde{\kappa}(q) = \kappa(q)$ for all $q>\omega$. 
Summarizing the two cases, we conclude that there exists a certain constant $a>0$ such that $\kappa(q) = \tilde{\kappa}(q)$ for all $q >a$, which entails that $\kappa = \tilde{\kappa}$.  
\end{proof}


\bibliographystyle{amsplain}
\bibliography{quanshi_embedding.bib}

\end{document}